\newcolumntype{x}{>{\vbox to 5ex\bgroup\vfill\centering\arraybackslash\hspace{0pt}}m{2.2cm}<{\egroup}}
\newtheorem{theorem}{Theorem}[section]
\newtheorem{corollary}[theorem]{Corollary}
\newtheorem{lemma}[theorem]{Lemma}
\newcommand{\Z}{\mathbb{Z}}
\newenvironment{config}{\tt\small\setlength{\parskip}{1pt}}{}
\DeclareMathOperator{\Aut}{Aut}
\DeclareMathOperator{\per}{per}
\title{Colouring problems for symmetric configurations with block size 3}
\author{
Grahame Erskine\thanks{Open University, Milton Keynes, UK}\footnotemark[1]\\ \texttt{\small grahame.erskine@open.ac.uk}\\ \small{(corresponding author)}
\and Terry Griggs\footnotemark[1]\\ \texttt{\small terry.griggs@open.ac.uk}
\and Jozef \v{S}ir\'a\v{n}\thanks{Open University, Milton Keynes, UK and Slovak University of Technology, Bratislava, Slovakia}\footnotemark[2]\\ \texttt{\small jozef.siran@open.ac.uk}
}
\date{}
\begin{document}
\maketitle
\let\thefootnote\relax\footnote{Mathematics subject classification: 05B30, 05C15}
\let\thefootnote\relax\footnote{Keywords: configuration; chromatic number; blocking set}

\vspace*{-8ex}
\begin{abstract}
\noindent The study of symmetric configurations $v_3$ with block size 3 has a long and rich history. In this paper we consider two colouring problems which arise naturally in the study of these structures. The first of these is weak colouring, in which no block is monochromatic; the second is strong colouring, in which every block is multichromatic. The former has been studied before in relation to blocking sets. Results are proved on the possible sizes of blocking sets and we begin the investigation of strong colourings.  We also show that the known $21_3$ and $22_3$ configurations without a blocking set are unique and make a complete enumeration of all non-isomorphic $20_3$ configurations. We discuss the concept of connectivity in relation to symmetric configurations and complete the determination of the spectrum of 2-connected symmetric configurations without a blocking set. A number of open problems are presented.
\end{abstract}

\section{Introduction}\label{sec:intro}
In this paper we will be concerned with symmetric configurations with block size 3 and, more particularly, two colouring problems which arise naturally from their study. First we recall the definitions. A \emph{configuration} $(v_r,b_k)$ is a finite incidence structure with $v$ points and $b$ blocks, with the property that there exist positive integers $k$ and $r$ such that
\begin{enumerate}[label=(\roman*),topsep=0pt,itemsep=0pt]
	\item each block contains exactly $k$ points;
	\item each point is contained in exactly $r$ blocks; and
	\item any pair of distinct points is contained in at most one block.
\end{enumerate}
A configuration is said to be \emph{decomposable} or \emph{disconnected} if it is the union of two configurations on distinct point sets. We are primarily interested in indecomposable (connected) configurations, and so unless otherwise noted, this is assumed throughout the paper.

If $v=b$ (and hence necessarily $r=k$), the configuration is called \emph{symmetric} and is usually denoted by $v_k$. We are interested in the case where $k=3$. Such configurations include a number of well-known mathematical structures. The unique $7_3$ configuration is the Fano plane, the unique $8_3$ configuration is the affine plane $AG(2,3)$ with any point and all the blocks containing it deleted, the Pappus configuration is one of three $9_3$ configurations and the Desargues configuration is one of ten $10_3$ configurations. Symmetric configurations have a long and rich history. It was Kantor in 1881~\cite{Kantor1881} who first enumerated the $9_3$ and $10_3$ configurations and in 1887, Martinetti~\cite{martinetti1887sulle} showed that there are exactly 31 configurations $11_3$.

It is natural to associate two graphs with a symmetric configuration $v_3$. The first is the \emph{Levi graph} or \emph{point-block incidence graph}, obtained by considering the $v$ points and $v$ blocks of a configuration as vertices, and including an edge from a point to every block containing it. It follows that the Levi graph is a cubic (3-regular) bipartite graph of girth at least six. The second graph is the \emph{associated graph}, obtained by considering only the points as vertices and joining two points by an edge if and only if they appear together in some block. Thus the associated graph is regular of valency 6 and order $v$. 

We note that symmetric configurations with block size 3 have also been studied in the context of 3-regular, 3-uniform hypergraphs. In this scenario the points of the configuration are identified with the vertices in the hypergraph and the blocks with the hyperedges; the condition that no pair of distinct vertices should be in more than one hyperedge is usually referred to as a \emph{linearity} condition in hypergraph terminology.

By a \emph{colouring} of a symmetric configuration $v_3$, we mean a mapping from the set of points to a set of colours. In such a mapping, if no block is monochromatic we have a \emph{weak colouring} and if every block is \emph{multichromatic} or \emph{rainbow} we have a \emph{strong colouring}. The minimum number of colours required to obtain a weak (resp. strong) colouring will be called the \emph{weak} (resp. \emph{strong}) \emph{chromatic number} and denoted by $\chi_w$ (resp. $\chi_s$). It is immediate from the definition that the strong chromatic number $\chi_s$ of a configuration is equal to the chromatic number of its associated graph.

Weak colourings have been studied before in relation to so-called blocking sets and in Section~\ref{sec:sizes} we begin the study of the sizes of these. In Section~\ref{sec:bsfree} we bring together various results concerning symmetric configurations without a blocking set which appear throughout the literature, some of which do not seem to be readily available. Section~\ref{sec:conn} is concerned with the connectivity of configurations and we complete the spectrum of 2-connected symmetric configurations without a blocking set. Our results on enumeration appear in Section~\ref{sec:enum}. In particular we extend known results by enumerating all symmetric configurations $20_3$ together with their properties, and prove that the known $21_3$ and $22_3$ configurations without a blocking set are the unique configurations of those orders with that property. Section~\ref{sec:strong} is concerned with strong colourings. To the best of our knowledge, both this topic and the sizes of blocking sets in Section~\ref{sec:sizes} appear to have been neglected and the results are new. Finally in Section~\ref{sec:open} we bring together some of the open problems raised by the work in this paper.

\section{Weak colourings}\label{sec:weak}
We begin with the following result which is a special case of Theorem~8 of~\cite{Bollobas1985}.
\begin{theorem}[Bollob\'as and Harris]
For every symmetric configuration $v_3$, either $\chi_w=2$ or $\chi_w=3$.
\end{theorem}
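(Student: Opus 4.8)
The lower bound is immediate: a constant colouring makes every block monochromatic, and a symmetric $v_3$ configuration has $v\ge 7$ blocks, so $\chi_w\ge 2$; hence the whole content is the upper bound $\chi_w\le 3$. My plan is to colour one class by hand and reduce the rest to a purely hypergraph-theoretic statement. Fix a \emph{maximal} set $V_1$ of points containing no block (one exists by finiteness, grown greedily from $\emptyset$) and give every point of $V_1$ colour $1$. By maximality, for each point $p\notin V_1$ the set $V_1\cup\{p\}$ contains a block, which must pass through $p$ and have its other two points in $V_1$; hence $p$ lies in \emph{at most two} blocks contained in $\overline{V_1}$. Thus the $3$-uniform hypergraph $H'$ with point set $\overline{V_1}$ and block set $\{B : B\subseteq\overline{V_1}\}$ has maximum degree at most $2$ (and is linear, being a sub-hypergraph of the configuration). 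If the points of $\overline{V_1}$ can be coloured with $2$ and $3$ so that no block of $H'$ is monochromatic, we are done: a monochromatic block of the configuration would lie either in $V_1$ (impossible) or inside the colour-$2$ or colour-$3$ class $\subseteq\overline{V_1}$, which is excluded.

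So it remains to prove: \emph{every $3$-uniform linear hypergraph of maximum degree at most $2$ has a weak $2$-colouring.} I would take a counterexample minimising the number of blocks and, among those, the number of points. It can have no isolated point, and no point $x$ of degree $1$: such an $x$ lies in a unique block $B$, whose removal leaves a (linear, maximum-degree-$\le 2$) hypergraph with fewer blocks, hence weakly $2$-colourable by minimality, and recolouring $x$ if necessary makes $B$ non-monochromatic without affecting any other block, a contradiction. So the minimal counterexample $H'$ is $2$-regular. Now let $G'$ be the graph whose vertices are the blocks of $H'$, two blocks joined by an edge when they share a point; linearity makes $G'$ simple and $2$-regularity makes it cubic, and under this correspondence the \emph{points} of $H'$ become the \emph{edges} of $G'$. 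A weak $2$-colouring of $H'$ is then exactly a $2$-colouring of $E(G')$ in which no vertex of $G'$ meets three edges of one colour, i.e.\ a set $F\subseteq E(G')$ with $1\le\deg_F(v)\le 2$ for every vertex $v$.

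Producing such an $F$ in an arbitrary cubic graph is the heart of the argument and the step I expect to need the most care. Take a maximum matching $M$ of $G'$ and let $U$ be the set of vertices it misses; $U$ is independent, since an edge inside $U$ would enlarge $M$. For $S\subseteq U$, the $3|S|$ edges meeting $S$ all have their other end in $N(S)\subseteq V(G')\setminus U$, and each vertex of $N(S)$ absorbs at most two of them (its $M$-edge runs to another matched vertex, not into $S$), so $|N(S)|\ge\frac{3}{2}|S|\ge|S|$; Hall's theorem then gives distinct representatives $v_u\in N(u)$ for $u\in U$, and $F=M\cup\{u v_u : u\in U\}$ has all degrees in $\{1,2\}$. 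Tracing back through the reductions gives $\chi_w\le 3$, and combined with $\chi_w\ge 2$ this proves the theorem. I note that indecomposability of the configuration plays no role here; the one genuinely non-routine ingredient is the weak $2$-colourability of maximum-degree-$2$ hypergraphs (equivalently, the cubic-graph lemma above), and everything around it is bookkeeping.
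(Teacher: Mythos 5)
Your proof is correct, and it takes a genuinely different route from the paper: the paper does not prove this statement at all, but simply imports it as a special case of Theorem~8 of Bollob\'as and Harris's list-colouring paper, whereas you give a self-contained elementary argument. Your reduction is sound: a maximal block-free set $V_1$ receives colour~1, and since each point $p\notin\overline{V_1}$ lies in exactly three blocks of which at least one (forced by maximality) has its other two points in $V_1$, the hypergraph of blocks inside $\overline{V_1}$ is linear, $3$-uniform and of maximum degree at most~$2$. The minimal-counterexample reduction to the $2$-regular case is fine (a degree-$1$ point can always be recoloured after deleting its unique block), linearity and $2$-regularity do give a simple cubic graph whose edges are the points, and the heart of the matter --- that every cubic graph has a spanning subgraph $F$ with all degrees in $\{1,2\}$ --- is correctly established by your maximum-matching/Hall argument: $U$ is independent, every neighbour of $U$ is matched with its partner outside $U$, so $3|S|\le 2|N(S)|$ for $S\subseteq U$, and the system of distinct representatives added to $M$ gives $1\le\deg_F\le 2$ everywhere, hence a weak $2$-colouring of the residual hypergraph and $\chi_w\le 3$ overall. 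What your approach buys is a short, fully elementary proof usable without consulting the source (and it visibly needs neither connectivity nor symmetry beyond $r=3$); what the citation buys the paper is the stronger original statement, which is a list-colouring result for more general hypergraphs rather than just the bound $\chi_w\le 3$ for configurations $v_3$.
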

A \emph{blocking set} in a symmetric configuration is a subset of the set of points which has the property that every block contains both a point of the blocking set and a point of its complement. From this definition it is immediate that the complement of a blocking set is also a blocking set, and that the existence of a blocking set in a configuration is equivalent to  $\chi_w=2$. Empirical evidence indicates that almost all symmetric configurations $v_3$ contain a blocking set. Indeed, Table~\ref{tab:enum} shows that of the 122,239,000,083 connected configurations with $v\leq 20$, only 6 fail to have a blocking set.

For any $v\geq 8$, a configuration with a blocking set is very easy to construct. For $v$ even, the set of blocks generated by the block $\{0,1,3\}$ under the mapping $i\mapsto i+1\!\pmod{v}$ has a blocking set consisting of all the odd numbers (and hence, another consisting of all the even numbers). For $v$ odd and $v\geq 11$, construct the symmetric configuration $(v-1)_3$ as above and replace the blocks $\{0,1,3\}$ and $\{4,5,7\}$ with the blocks $\{1,3,c\}$, $\{4,7,c\}$ and $\{0,5,c\}$, where $c$ is a new point. The set of odd numbers is still a blocking set. The Fano plane does not have a blocking set, but all three $9_3$ configurations do.

The above extension operation can be summarised and generalised as follows.
\begin{itemize}[topsep=0pt,itemsep=0pt]
	\item Choose two non-intersecting blocks $\{a_1,a_2,a_3\}$ and $\{b_1,b_2,b_3\}$ such that the points $a_1$ and $b_1$ are not in a common block.
	\item Remove these blocks, introduce a new point $c$ and add three new blocks $\{c,a_2,a_3\}$, $\{c,b_2,b_3\}$ and $\{c,a_1,b_1\}$.
\end{itemize} 
This construction goes back to Martinetti~\cite{martinetti1887sulle}; see also~\cite{Boben2007}.

\subsection{Sizes of blocking sets}\label{sec:sizes}
Perhaps surprisingly, the cardinalities of blocking sets of those configurations $v_3$ for which $\chi_w=2$ do not seem to have been studied. Let $Q$ be such a blocking set and let $q=|Q|$. It is immediate that $\lceil v/3\rceil\leq q\leq\lfloor 2v/3\rfloor$. We have the following result.
\begin{theorem}\label{thm:configsize}
Let $v_3$ be a symmetric configuration with a blocking set $Q$ of cardinality $q$ where $\lceil v/3\rceil\leq q < \lfloor v/2\rfloor$. Then $v_3$ also has a blocking set $\overline{Q}$ of cardinality $q+1$.
\end{theorem}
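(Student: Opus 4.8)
The plan is to obtain $\overline{Q}$ simply by adjoining one suitable point to $Q$. Writing $\overline{Q}$ here for a new blocking set and temporarily using $Q^c$ for the set-theoretic complement of $Q$ (which has size $v-q$), I would look for a point $x\in Q^c$ such that $Q\cup\{x\}$ is still a blocking set; since $x\notin Q$ this set has size $q+1$, and it is a proper subset of the point set because $q+1\le\lfloor v/2\rfloor<v$.

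First I would identify exactly which points are safe to add. Because $Q$ is a blocking set, every block meets both $Q$ and $Q^c$, so every block contains either one or two points of $Q$. Adjoining $x$ to $Q$ destroys the blocking property precisely when some block $B$ satisfies $B\setminus\{x\}\subseteq Q$, i.e.\ $B$ has two points in $Q$ and its unique point of $Q^c$ is $x$. Call such an $x$ \emph{bad}. Thus $Q\cup\{x\}$ is a blocking set if and only if $x$ is not bad, and it suffices to show that $Q^c$ contains a point that is not bad.

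Then I would count. Let $n_2$ denote the number of blocks containing exactly two points of $Q$ (hence exactly one point of $Q^c$). Double counting the incidences between $Q$ and the blocks — each of the $q$ points of $Q$ lies in exactly $3$ blocks, and each block contributes $1$ or $2$ to this total — gives $n_2=3q-v$ (and correspondingly $2v-3q$ blocks meet $Q$ in a single point; both quantities are nonnegative since $\lceil v/3\rceil\le q\le\lfloor 2v/3\rfloor$). Every bad point is the $Q^c$-point of at least one of these $n_2$ blocks, so there are at most $3q-v$ bad points. As $|Q^c|=v-q$, the number of non-bad points in $Q^c$ is at least $(v-q)-(3q-v)=2(v-2q)$, which is positive exactly when $q<v/2$; and the hypothesis $q<\lfloor v/2\rfloor$ guarantees precisely this. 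Choosing any non-bad $x$ and setting $\overline{Q}=Q\cup\{x\}$ completes the argument.

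The proof is essentially a single double count, so I do not anticipate a genuine obstacle; the only step needing a little care is the bookkeeping that converts ``a block has two points of $Q$'' into the bound $3q-v$ on the number of bad points, and then verifying that $q<\lfloor v/2\rfloor$ is exactly the condition that makes this bound strictly smaller than $|Q^c|$, so that a safe point is guaranteed to exist.
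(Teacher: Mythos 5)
Your proposal is correct and follows essentially the same route as the paper: both double count incidences to show that exactly $3q-v$ blocks meet $Q$ in two points, note that $q<\lfloor v/2\rfloor$ makes this number smaller than $|Q^c|=v-q$, and then adjoin a point of the complement lying in no such block. The only cosmetic difference is that you phrase the conclusion via ``bad'' points rather than the paper's sets $A$ and $B$ of blocks; the argument is the same.
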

\begin{proof}
Let $A$, of cardinality $\alpha$, be the set of blocks of the configuration which $Q$ intersects in one point and $B$, of cardinality $\beta$, be the set of blocks which $Q$ intersects in two points. Then $\alpha+\beta=v$ and $\alpha+2\beta=3q$. Thus since $q<\lfloor v/2\rfloor$, $\beta=3q-v<v-q$. Each block in $B$ contains two points in $Q$ and one point not in $Q$. Hence there exists a point $x\notin Q$ which is in no block of $B$, and so must be contained in three blocks of $A$. The set $\overline{Q}=Q\cup\{x\}$ is also a blocking set.
\end{proof}
Bearing in mind that if $Q$ is a blocking set for a configuration $v_3$ then so is $V\setminus Q$, it follows that the range of cardinalities of blocking sets of a configuration is continuous, and that configurations can be categorised by the minimum cardinality of a blocking set; a blocking set of this minimum cardinality will be called a \emph{minimal blocking set}. Configurations which have a blocking set of cardinality $q$ for all $q:\lceil v/3\rceil\leq q\leq\lfloor 2v/3\rfloor$ are relatively easy to construct.
\begin{theorem}\label{thm:minblocking}
Let $v\geq 9$. Then there exists a configuration $v_3$ having a blocking set of cardinality $q$ for all $q:\lceil v/3\rceil\leq q\leq\lfloor 2v/3\rfloor$.
\end{theorem}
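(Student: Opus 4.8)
The plan is to reduce the full range of cardinalities to a single extremal case and then exploit Theorem~\ref{thm:configsize} together with complementation. Recall that any blocking set has size at least $\lceil v/3\rceil$ and that the complement of a blocking set is again a blocking set. Hence it suffices to produce, for each $v\ge 9$, \emph{one} configuration $v_3$ carrying a blocking set of the minimum size $\lceil v/3\rceil$: from such a configuration, repeated use of Theorem~\ref{thm:configsize} yields blocking sets of every size $q$ with $\lceil v/3\rceil\le q\le\lfloor v/2\rfloor$, and taking complements of these gives blocking sets of every size $q$ with $\lceil v/2\rceil\le q\le\lfloor 2v/3\rfloor$; since $\lceil v/2\rceil\le\lfloor v/2\rfloor+1$, the two intervals together cover $\lceil v/3\rceil\le q\le\lfloor 2v/3\rfloor$. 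So the whole task is the construction of one configuration $v_3$ with a minimum-size blocking set.

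For $v\equiv 0\pmod 3$ with $v\ge 12$ this is immediate. Take the cyclic configuration on $\Z_v$ whose blocks are $\{i,i+1,i+5\}$ for $i\in\Z_v$; the six differences $\pm1,\pm4,\pm5$ are distinct and nonzero for $v\ge 11$, so this is a configuration. Because $5\equiv 2\pmod 3$, every block meets each residue class modulo $3$ in exactly one point, so the set $Q$ of multiples of $3$ is a blocking set, and $|Q|=v/3=\lceil v/3\rceil$.

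For $v\not\equiv 0\pmod 3$ the idea is to obtain the configuration from a cyclic one of order divisible by $3$ by one or two applications of the Martinetti extension described above, choosing the two deleted blocks carefully and deciding whether the new point joins the blocking set so as to land exactly on the value $\lceil v/3\rceil$. If $v\equiv 1\pmod 3$ and $v\ge 13$, extend the cyclic $(v-1)_3$ just constructed; its blocking set $Q$ of size $(v-1)/3$ meets every block exactly once, so one may delete disjoint blocks $X=\{a_1,a_2,a_3\}$ and $Y=\{b_1,b_2,b_3\}$ with $a_1\notin Q$ and $b_1\in Q$, and put the new blocking set equal to $Q\cup\{c\}$, of size $(v-1)/3+1=(v+2)/3=\lceil v/3\rceil$; a short check shows the three new blocks $\{c,a_2,a_3\},\{c,b_2,b_3\},\{c,a_1,b_1\}$ each meet both $Q\cup\{c\}$ and its complement. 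If $v\equiv 2\pmod 3$ and $v\ge 14$, apply the extension once more to the $(v-1)_3$ configuration just obtained; its blocking set is minimal, and by the same counting as in the proof of Theorem~\ref{thm:configsize} it meets exactly two blocks in two points, so one may now choose $X$ to be such a block (taking $a_1\in Q$) and $Y$ any disjoint block (taking $b_1\notin Q$) and keep the blocking set equal to $Q$, whose size $(v+1)/3=\lceil v/3\rceil$ is unchanged. Finally the three values $v=9,10,11$ are handled by hand: the Pappus configuration has a blocking set of size $3$, and suitable $10_3$ and $11_3$ configurations with blocking sets of size $4$ are obtained from it by one and by two such extensions.

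The routine parts are the arithmetic of the first paragraph, the cyclic construction, and the final block-by-block verifications. The step that needs care --- and which I expect to be the main obstacle --- is the choice of the two blocks to delete in each Martinetti extension: one must simultaneously meet the non-intersection condition on the blocks, the condition that $a_1$ and $b_1$ lie in no common block (needed for the extension to produce a genuine configuration), and the membership conditions on $a_1,b_1$ with respect to the blocking set. For large $v$ this is a comfortable counting argument, since any point lies in a common block with at most six others, so there is ample freedom; only a bounded list of small orders has to be checked directly, the genuinely exceptional one being $v=9$, where the base block $\{0,1,5\}$ fails (its differences collide modulo $9$) and Pappus is used instead.
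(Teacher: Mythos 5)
Your overall route is essentially the paper's: reduce, via Theorem~\ref{thm:configsize} and complementation, to producing for each $v\ge 9$ a single configuration with a blocking set of the minimum size $\lceil v/3\rceil$; construct this directly when $3\mid v$; and reach the other residues by Martinetti extensions whose deleted blocks are chosen so as to control the blocking set. The reduction is correct, the cyclic $\{0,1,5\}$ construction for $3\mid v$, $v\ge 12$ is correct, and your two extension recipes do produce blocking sets of size exactly $\lceil v/3\rceil$ whenever admissible blocks exist; moreover, for the cyclic base an explicit admissible choice is available (e.g.\ $X=\{0,1,5\}$, $a_1=1$, $Y=\{3,4,8\}$, $b_1=3$), so the ``comfortable counting argument'' you defer is indeed harmless for $v\ge 13$, and similarly for the second extension when $v\ge 14$.

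The genuine gap is in the base cases $v=10,11$. You propose to obtain them from the Pappus configuration by one and by two ``such extensions'', i.e.\ extensions with $a_1\notin Q$, $b_1\in Q$ and $a_1,b_1$ not collinear. In Pappus this is impossible: its non-collinearity graph is a disjoint union of three triangles, and any blocking set of size $3$ meets each of the nine blocks exactly once, hence is an independent set of the associated graph, hence is the vertex set of one of those triangles. Consequently every point outside $Q$ is collinear with all three points of $Q$, and no admissible pair $(a_1,b_1)$ exists. The conclusion you need is still true, but it needs a different device: the paper's own $v=10$ step (whose $9$-point base is a copy of Pappus with $Q=\{a_0,a_1,a_2\}$ one of the three triangles) joins two non-collinear points $b_0,b_1$ lying \emph{outside} $Q$ and then enlarges the blocking set by the old point $b_1$ rather than by the new point; alternatively one can perform any legal extension of Pappus and exhibit directly a size-$4$ blocking set of the resulting $10_3$ configuration (such a set exists, but not of the form $Q\cup\{c\}$ you prescribe). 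As written, your $v=10$ case, and the $v=11$ case built on top of it, do not go through.
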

\begin{proof}
In view of Theorem~\ref{thm:configsize}, it is sufficient to construct a symmetric configuration with a blocking set of cardinality $\lceil v/3\rceil$. Suppose first that $v\equiv 0\!\pmod 3, v\geq 9$. Let $v=3s$. Let the points of the configuration be $V=\{a_i,b_i,c_i:0\leq i\leq s-1\}$. Let the blocks be the sets $\{a_i,b_i,c_{i+1}\}$, $\{a_i,b_{i+1},c_i\}$, $\{a_{i+1},b_i,c_i\}$, $0\leq i\leq s-1$, subscript arithmetic modulo $s$. The set $Q=\{a_i:0\leq i\leq s-1\}$ is a blocking set.

Now suppose that $v\equiv 1\!\pmod 3,v\geq 10$. Construct a configuration $(v-1)_3$ as above. Introduce a new point $\infty_0$ and use Martinetti's extension operation, replacing the blocks $\{a_0,b_0,c_1\}$ and $\{a_1,b_1,c_2\}$ by blocks $\{\infty_0,b_0,b_1\}$, $\{\infty_0,a_0,c_1\}$ and $\{\infty_0,a_1,c_2\}$. The set $Q\cup \{b_1\}$ is a blocking set.

Finally, suppose that $v\equiv 2\!\pmod 3,v\geq 11$. Construct a configuration $(v-1)_3$ as above. Introduce a further new point $\infty_1$ and again use the extension operation, replacing the blocks $\{a_0,b_1,c_0\}$ and $\{a_1,b_2,c_1\}$ by blocks $\{\infty_1,b_1,b_2\}$, $\{\infty_1,a_0,c_0\}$ and $\{\infty_1,a_1,c_1\}$. The set $Q\cup\{b_1\}$ is again a blocking set.
\end{proof}
We note that the condition $v\geq 9$ in the above theorem is necessary; the unique $7_3$ configuration has no blocking set at all, and the unique $8_3$ configuration has a minimal blocking set of cardinality 4. Since Theorem~\ref{thm:minblocking} shows that a configuration $v_3$ with a minimal blocking set as small as possible exists for all $v\geq 9$, it is natural to ask what the range of possible sizes of minimal blocking sets might be. At the minimum end of the range, we are able to prove the following results.

\begin{theorem}\label{thm:nearmin}
	~
	
\begin{enumerate}[nosep,label=(\alph*)]
	\item There exists a configuration $v_3$ with a minimal blocking set of size $\lceil\frac{v}{3}\rceil+1$ for all $v\geq 8$.
	\item There exists a configuration $v_3$ with a minimal blocking set of size $\lceil\frac{v}{3}\rceil+2$ for $v=12$ and all $v\geq 15$.
\end{enumerate}
\end{theorem}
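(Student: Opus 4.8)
The plan is to reduce both parts to extremal questions about the associated graph and then to resolve those questions by explicit constructions. The key observation, essentially the count already used in the proof of Theorem~\ref{thm:configsize}, is that for a symmetric configuration $v_3$ with associated graph $G$, every $q$-subset $Q$ of the point set induces at least $3q-v$ edges of $G$, with equality if and only if $Q$ is a blocking set: if $Q$ hits every block then $\alpha+\beta=v$ and $\alpha+2\beta=3q$ give $\beta=3q-v$ collinear pairs, and if $Q$ misses a block or contains one then a one-line extension of the same count shows the number of collinear pairs strictly exceeds $3q-v$. Consequently a configuration has a minimal blocking set of size $m$ exactly when $G$ has an $m$-set spanning precisely $3m-v$ edges but no $(m-1)$-set spanning only $3(m-1)-v$ edges; part~(a) (resp.\ part~(b)) thus asks for a configuration with an $(\lceil v/3\rceil+1)$-set (resp.\ an $(\lceil v/3\rceil+2)$-set) hitting the minimum and with no smaller set doing so.

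The configurations I would use are the cyclic ones $\Cay(\Z_v,\{0,1,3\})$, blocks $\{i,i+1,i+3\}$, for which $G=\Cay(\Z_v,\{1,2,3\})$ and two points are collinear precisely when their circular distance is $1$, $2$ or $3$; this makes both directions combinatorial. For the lower bound, $k$ points of $\Z_v$ with consecutive gaps $g_1,\dots,g_k$ summing to $v$ span a collinear pair for every index where a short run of consecutive gaps sums to at most $3$; since only $\lfloor (v-k)/3\rfloor$ of the $g_i$ can be $\ge 4$ (otherwise the gaps would sum past $v$), at least $k-\lfloor (v-k)/3\rfloor$ of them are small and already force that many collinear pairs, and refining this by the collinear pairs that appear when small gaps are forced to sit adjacent to one another yields a strict excess over $3\lceil v/3\rceil-v$ for the relevant orders, so no minimum-size blocking set exists. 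For the matching upper bound I would place $\lceil v/3\rceil+1$ (resp.\ $\lceil v/3\rceil+2$) points of $\Z_v$ as a few short runs — isolated points, dominoes $\{a,a+1\}$, triples $\{a,a+1,a+2\}$ — with the large gaps between runs chosen so that the only collinear pairs are internal to the runs, the total being exactly $3q-v$ and no run being a block; by the observation above such a set is automatically a blocking set. Part~(a) for $v=8$ is just the unique $8_3$ configuration mentioned above.

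The main obstacle is that no single cyclic family works for all $v$, so the argument has to split into cases and dispose of a list of exceptional orders. For some $v$ (notably small $v\equiv 1\pmod 3$) the cyclic configuration already carries a blocking set of size $\lceil v/3\rceil$; for many $v\equiv 0\pmod 3$ it "overshoots", with minimal blocking set $\lceil v/3\rceil+2$. I would handle the former by a $v$-preserving switch of a pair of disjoint blocks (a Martinetti-type modification) that destroys the perfect blocking set while leaving one of size $\lceil v/3\rceil+1$, and recycle the overshooting configurations for part~(b). For part~(b) the cyclic configuration is already on hand at $v=12$ and at the overshooting orders, but covering the remaining $v\ge 15$ — and seeing exactly why $v=13$ and $v=14$ are not in the statement — needs a more delicate construction, for instance grafting a controlled chain of Martinetti extensions onto $\Cay(\Z_{12},\{0,1,3\})$ and checking precisely which orders it reaches. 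The real difficulty is this bookkeeping across residue classes and small exceptional orders rather than any single hard step.
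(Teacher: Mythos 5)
Your counting lemma (a $q$-set induces at least $3q-v$ edges of the associated graph, with equality precisely for blocking sets) is correct and is a nice reformulation, but the construction you hang on it has a fatal flaw: the cyclic configurations $C_v$ generated by $\{0,1,3\}$ simply do not have blocking sets as small as $\lceil v/3\rceil+1$ or $\lceil v/3\rceil+2$ once $v$ is even moderately large. By Theorem~\ref{thm:blocking013} the minimal blocking set of $C_v$ has size $2\lfloor v/5\rfloor+\varepsilon\sim 2v/5$, and $2v/5-\lceil v/3\rceil$ grows like $v/15$, so beyond roughly $v=25$ (for part (a)) and $v=40$ (for part (b)) no set of the required size can be a blocking set in $C_v$ at all. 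Your own gap analysis, pushed through, shows the same thing: a set of size about $v/3$ whose only collinear pairs are the $3q-v\approx$ constant internal ones would need almost all its points at circular distance at least $4$ from each other, which requires about $4v/3>v$ positions. So the "runs separated by large gaps" upper-bound construction cannot exist in this family, and the proposal is internally inconsistent for large $v$. Your proposed repairs --- a $v$-preserving block switch, or "grafting a controlled chain of Martinetti extensions onto $\Cay(\Z_{12},\{0,1,3\})$" --- are exactly where all the work would have to happen, and they are not worked out; note also that a single Martinetti extension changes $v$ by one and there is no reason a bounded number of such local modifications can close a deficit that grows linearly in $v$. You have also misdiagnosed the failure mode: the issue for large $v$ is not that $C_v$ "already carries" a blocking set of size $\lceil v/3\rceil$ or overshoots by exactly two, but that the overshoot is unbounded.

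For comparison, the paper's proof does not use the $\{0,1,3\}$ family except for the few small orders in part (b) where Theorem~\ref{thm:blocking013} happens to give exactly $\lceil v/3\rceil+2$ (namely $v\in\{18,21,22,23,24\}$), plus explicit sporadic examples. The general construction instead glues a small configuration that is forced to contribute a fixed surplus to any blocking set (the $8_3$, $9_3$ or $10_3$ configurations, whose minimal blocking sets have size $4$ or $5$; two such pieces for part (b)) onto the configuration of Theorem~\ref{thm:minblocking} on points $\{a_i,b_i,c_i\}$, which has a blocking set of near-minimum size, joining the pieces by exchanging one point between two blocks. The lower bound on the minimal blocking set of the composite is then obtained by analysing the two (or three) pieces separately, and the upper bound by exhibiting an explicit blocking set of size $s+4$ (resp. $s+8$). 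If you want to salvage your approach you would need a different infinite family whose associated graph admits $(\lceil v/3\rceil+1)$-sets spanning exactly $3q-v$ edges but no smaller such sets; the circulant $\Cay(\Z_v,\{1,2,3\})$ provably does not.
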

\begin{proof}
We deal first with part (a). First observe that from Table~\ref{tab:bssizes}, there exists such a configuration $v_3$ for $8\leq v\leq 16$. Let $\mathcal{A}$ be the set of all blocks of the configuration $8_3$ as given in the Appendix, i.e. 012, 034, 056, 135, 147, 246, 257, 367. This has a minimal blocking set of size 4. Let $\mathcal{B}$ be the set of blocks of a configuration $(3s)_3$ as given in Theorem~\ref{thm:minblocking}; i.e. the points are the set $V=\{a_i,b_i,c_i:0\leq i\leq s-1\}$ and the blocks are the sets $\{a_i,b_i,c_{i+1}\}$, $\{a_i,b_{i+1},c_i\}$, $\{a_{i+1},b_i,c_i\}$, $0\leq i\leq s-1$, subscript arithmetic modulo $s$. Replace the block $\{0,1,2\}$ by $\{a_0,1,2\}$ and the block $\{a_0,b_0,c_1\}$ by $\{0,b_0,c_1\}$ to form sets $\overline{\mathcal{A}}$ and $\overline{\mathcal{B}}$ respectively. The set $\overline{\mathcal{A}}\cup\overline{\mathcal{B}}$ is a connected configuration $(3s+8)_3$. We need to show that this has a minimal blocking set of size 4.

Considering the set $\overline{\mathcal{A}}$, a blocking set $Q$ must contain at least 4 points of the set $\{a_0,i:0\leq i\leq 7\}$ and further, in the special case that both $a_0,0\in Q$ it must contain at least 5 points. Otherwise, then by replacing the point $a_0$ by the point 0 to return to the set $\mathcal{A}$, the configuration $8_3$ would have a blocking set of size 3. Now consider the set $\overline{\mathcal{B}}$. In the above special case, $Q$ must contain at least $s-1$ elements of the set $V\setminus\{a_0\}$ and in all other cases, at least $s$ elements. So $Q$ has at least $s+4$ elements; to show that a minimal blocking set has exactly $s+4$ elements we may take a blocking set $Q=\{1,4,5,6,b_i:0\leq i\leq s-1\}$.

We next deal with configurations $(3s+9)_3$, $s\geq 3$. The procedure is precisely the same as the above case, except that we use the configuration $9_3$ as given in the Appendix, i.e. 012, 034, 056, 135, 147, 248, 267, 368, 578 which also has a minimal blocking set of size 4. In this case we take a blocking set $Q=\{1,4,5,6,b_i:0\leq i\leq s-1\}$.

Finally for configurations $(3s+10)_3$ we use one of the two configurations $10_3$ as given in the Appendix with a minimal blocking set of size 5, namely 012, 034, 056, 135, 178, 247, 268, 379, 469, 589. Again the procedure is as in the above two cases and we can take a blocking set $Q=\{1,4,5,6,7,b_i:0\leq i\leq s-1\}$.

Now we deal with part (b). From Table~\ref{tab:bssizes}, there exists such a configuration for $v\in\{12,15,16,17\}$. The configuration on the set $\mathbb{Z}_v$ generated by the block $\{0,1,3\}$ under the mapping $i\mapsto i+1\!\pmod v$ has a blocking set of size $\lceil\frac{v}{3}\rceil+2$ for $v\in\{18,21,22,23,24\}$; see Theorem~\ref{thm:blocking013}. The case where $v=19$ is of particular interest since $\lceil\frac{v}{3}\rceil + 2 = \lfloor\frac{v}{2}\rfloor$, the maximum size of a minimal blocking set.  Of the 7,597,039,898 connected configurations $19_3$, see~\cite{EGS2019} and Table~\ref{tab:enum}, only seven have a minimal blocking set of size 9 and these are given below.

\begin{config}
\-\ 012 034 056 137 189 25a 28b 3cd 46e 4cf 5gh 6gi 79h 7dg 8ei 9ef abc afh bdi

\-\ 012 034 056 137 145 236 258 469 7ab 7cd 8ae 8cf 9ag 9ch bdi beg dfh efi ghi

\-\ 012 034 056 137 158 239 2ab 46c 47d 5ef 6eg 78h 8fi 9af 9ei acd bcg bdh ghi

\-\ 012 034 056 137 158 239 2ab 47c 4de 5fg 68f 6hi 7dh 8ei 9ag 9bi acf beh cdg

\-\ 012 034 056 137 148 239 24a 5bc 5de 6bd 6cf 78g 79h 8ah 9ag bef cei dfi ghi

\-\ 012 034 056 137 158 239 2ab 457 46c 6de 78f 8gh 9ai 9bg acd bdh cei efh fgi

\-\ 012 034 056 137 145 236 257 468 79a 89b 8ac 9de adf bcg beh cfi dhi egi fgh
\end{config}

The most interesting of these is possibly the first one which is point-transitive; one of only three such configurations $19_3$, again see~\cite{EGS2019} and Table~\ref{tab:enum}.  Its Levi graph is arc-regular and has automorphism group of order 114.  It is the unique symmetric graph of order 38 and is graph F038A in the Foster census~\cite{foster88}. The configuration is cyclic and is isomorphic to the configuration generated by the block $\{0,1,8\}$ under the mapping $i\mapsto i+1\!\pmod{19}$. An example of a symmetric configuration on 20 points having a minimal blocking set of size 9 is as follows. 

\begin{config}
\-\ 012 034 056 135 146 237 245 678 79a 8bc 8de 9bf 9dg abh adi cej cfh egi fgj hij
\end{config}

So we may assume that $v\geq 25$. We follow closely the argument above. Let $\mathcal{A}$ and $\mathcal{A}'$ be the sets of all blocks of the configuration $8_3$ as given in the Appendix on point sets $\{0,1,\ldots,7\}$ and $\{0',1',\ldots,7'\}$ respectively. Let $\mathcal{B}$ be as in part (a). Replace the block $\{0,1,2\}$ by $\{a_0,1,2\}$, the block $\{0',1',2'\}$ by $\{b_0,1',2'\}$ and the blocks $\{a_0,b_0,c_1\}$ and $\{a_1,b_0,c_0\}$ by $\{0,b_0,c_1\}$ and $\{a_1,0',c_0\}$ to form sets $\overline{\mathcal{A}}$, $\overline{\mathcal{A}'}$ and $\overline{\mathcal{B}}$ respectively.

As in part (a), by considering the sets $\overline{\mathcal{A}}$ and $\overline{\mathcal{A}'}$, a blocking set $Q$ must contain at least 4 points of each of the sets $\{a_0,i:0\leq i\leq 7\}$ and $\{b_0,i':0\leq i\leq 7\}$; and if both $a_0,0\in Q$ at least 5 points of the former set and if both $b_0,0'\in Q$ at least 5 points of the latter set. Now by considering the set $\overline{\mathcal{B}}$, $Q$ must contain at least $s-2$ elements of the set $V\setminus\{a_0,b_0\}$ if $\{a_0,0,b_0,0'\}\subset Q$; $s-1$ elements if either $a_0,0\in Q$ or $b_0,0'\in Q$ but not both; and $s$ elements otherwise. In other words, $Q$ must contain at least $s+8$ elements in total. To show that a minimal blocking set has exactly $s+8$ elements, take $Q=\{1,4,5,6,1',4',5',6',c_i:0\leq i\leq s-1\}$. This deals with symmetric configurations $(3s+16)_3$, $s\geq 3$.

To deal with symmetric configurations $(3s+17)_3$, $s\geq 3$, the procedure is precisely the same except that for the set $\mathcal{A}$ we use the configuration $9_3$ as given in the Appendix. A minimal blocking set of size $s+8$ is again $Q=\{1,4,5,6,1',4',5',6',c_i:0\leq i\leq s-1\}$. Finally for configurations $(3s+18)_3$, $s\geq 3$, we also replace the set $\mathcal{A}'$ with the configuration $9_3$ on point set $\{0',1',\ldots,8'\}$. Again a minimal blocking set of size $s+8$ is $Q=\{1,4,5,6,1',4',5',6',c_i:0\leq i\leq s-1\}$.
\end{proof}

At the maximum end of the range, the situation appears to be much more difficult. Table~\ref{tab:bssizes} shows minimal blocking set sizes for symmetric configurations with $v\leq 17$. Of the $42,178,413$ such configurations, only $60$ have a minimal blocking set of size $\lfloor\frac{v}{2}\rfloor$; the 27 examples for $v\leq 14$ are shown in the Appendix. At $v=16$, it is noteworthy that only two of the very large number of configurations fail to have a blocking set of size 7; these are shown below.

\begin{config}
\-\ 012 034 156 078 59a 9bc 3de 57f 4bd 26b ace 8ef 479 13a 28c 6df

\-\ 012 034 567 589 0ab cde 6cf 136 78d 2ad 9ef 49b 37c 5bf 28e 14a
\end{config}

The first of these is one of the two flag-transitive configurations on 16 points; see~\cite{Betten2000} and Table~\ref{tab:enum}. Indeed its Levi graph is the Dyck graph, which is well-known and is the unique arc-transitive cubic graph on 32 vertices. The Dyck graph is graph F032A in the Foster census~\cite{foster88}. Although this graph has a number of known constructions, it seems that none of these can be generalised to produce further examples of configurations without small blocking sets. Unfortunately therefore, we cannot provide a construction for an infinite class of symmetric configurations $v_3$ having a minimal blocking set of maximum cardinality, and this remains a significant open problem.

\begin{table}
	\centering
	\begin{tabular}{|cr|rr|}
		\hline
		Points & Connected & \multicolumn{2}{r|}{Minimal blocking sets}\\
		$v$ & configurations & Size & Number\\
		\hline
		7 & 1 & None & 1\\
		\hline
		8 & 1 & 4 & 1\\
		\hline
		9 & 3 & 3 & 2\\
		& & 4 & 1\\
		\hline
		10 & 10 & 4 & 8\\
		& & 5 & 2\\
		\hline
		11 & 31 & 4 & 25\\
		& & 5 & 6\\
		\hline
		12 & 229 & 4 & 45\\
		& & 5 & 182\\
		& & 6 & 2\\
		\hline
		13 & 2,036 & None & 1\\
		& & 5 & 2,020\\
		& & 6 & 15\\
		\hline
		14 & 21,398 & 5 & 16,884\\
		& & 6 & 4,514\\
		& & 7 & 0\\
		\hline
		15 & 245,341 & 5 & 24,550\\
		& & 6 & 220,720\\
		& & 7 & 21\\
		\hline
		16 & 3,004,877 & 6 & 2,992,125\\
		& & 7 & 12,750\\
		& & 8 & 2\\
		\hline
		17 & 38,904,486 & 6 & 25,065,267\\
		& & 7 & 13,839,209\\
		& & 8 & 10\\
		\hline
	\end{tabular}
	\caption{Sizes of minimum blocking sets of connected configurations}
	\label{tab:bssizes}
\end{table}

However, we are able to construct symmetric configurations whose minimal blocking sets have a size as far away as we please from both the minimum or maximum possible cardinalities, as the following theorem and corollary show.
 
\begin{theorem}\label{thm:blocking013}
Let $v\geq 8$ and let $C_v$ be the cyclic configuration on $v$ points generated by the block $\{0,1,3\}$ under the mapping $i\mapsto i+1\!\pmod{v}$. Then the size of a minimal blocking set in $C_v$ is:
\[
m(v)=2\left\lfloor\frac{v}{5}\right\rfloor+\varepsilon
,\text{ where }\varepsilon=
\begin{cases}
	0&\text{ if }v\equiv 0\!\pmod{5};\\
	1&\text{ if }v\equiv 1\!\pmod{5};\\
	2&\text{ if }v\equiv 2,3,4\!\pmod{5}.\\
\end{cases}
\]
\end{theorem}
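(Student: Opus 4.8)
The plan is to encode a blocking set $Q\subseteq\Z_v$ by the cyclic $0$--$1$ word having a $1$ exactly on $Q$, and to translate the requirement — that neither $Q$ nor $\Z_v\setminus Q$ contains a block, i.e.\ a translate of $\{0,1,3\}$ — into local conditions on the \emph{gaps} between consecutive points of $Q$. Writing the points of $Q$ cyclically as $p_1,\dots,p_q$ and $g_t=p_{t+1}-p_t$ (indices mod $q$, with $p_{q+1}=p_1+v$), we have $g_t\ge 1$, $\sum_t g_t=v$ and $q=|Q|$. Inspecting where a translate of $\{0,1,3\}$ could sit gives the necessary conditions: (i) $g_t\le 4$ always, else four consecutive points miss $Q$ and contain a block; (ii) if $g_t\in\{3,4\}$ then $g_{t+1}=1$, else the two points just before $p_{t+1}$ together with $p_{t+1}+1$ miss $Q$ and form a block; (iii) the gap sequence of $Q$ has no $1$ immediately followed by a $2$, and no three consecutive entries equal to $1$, since either would force a block $\{p_t,p_t+1,p_t+3\}$ inside $Q$.

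For the lower bound I would deduce from (ii) and (iii) that a gap equal to $2$ can be preceded only by another gap equal to $2$ (a $3$ or a $4$ is followed by a $1$, and a $1$ cannot be followed by a $2$); since the gap sequence is cyclic, a blocking set therefore has either no gap equal to $2$ or all gaps equal to $2$. If all gaps equal $2$ then $q=v/2$, and a routine check through the five residue classes shows $v/2\ge m(v)$ whenever $v\ge 8$ (the tight cases being $v=8,12$). If no gap equals $2$, let $n_j$ count the gaps equal to $j\in\{1,3,4\}$; then $q=n_1+n_3+n_4$, $v=n_1+3n_3+4n_4$, and (ii) gives $n_1\ge n_3+n_4$. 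Eliminating $n_1$ yields $q=v-(2n_3+3n_4)$ subject to $4n_3+5n_4\le v$, and maximising $2n_3+3n_4$ over non-negative integers under this constraint — taking $n_4$ as large as possible, with a small correction for $v\equiv 3,4\pmod 5$ — gives $2n_3+3n_4\le v-m(v)$, hence $q\ge m(v)$. Applying this also to complements shows in passing that $m(v)\le\lfloor v/2\rfloor$ for $v\ge 8$, so blocking sets of that size can exist; and in all cases every blocking set of $C_v$ has at least $m(v)$ points.

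For the upper bound I would exhibit, for each residue of $v$ modulo $5$, an explicit gap sequence built from the two blocks $(4,1)$ and $(3,1)$: roughly $\lfloor v/5\rfloor$ copies of $(4,1)$, with the surplus absorbed by at most three copies of $(3,1)$, together with one extra singleton gap $1$ when $v\equiv 1\pmod 5$ (so, for instance, $(4,1)^{\lfloor v/5\rfloor-2}(3,1)^3$ when $v\equiv 2$). For $v\ge 8$ these recipes are well-defined, each has exactly $m(v)$ entries, contains no gap $2$, has every $3$ or $4$ followed by a $1$, and has no three consecutive $1$s, so conditions (i)--(iii) hold and one checks directly that the corresponding subset of $\Z_v$ is a blocking set. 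Combined with the lower bound, this shows that a minimal blocking set of $C_v$ has size exactly $2\lfloor v/5\rfloor+\varepsilon=m(v)$.

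The step I expect to be the main obstacle is getting condition (iii) into the argument correctly and recognising that it is indispensable. Conditions (i)--(ii) express only that $Q$ meets every block; by themselves they permit the gap pattern $(4,1)^{\lfloor v/5\rfloor}(2)$, of size only $2\lfloor v/5\rfloor+1$, which meets every block but contains one. It is precisely (iii) — no gap $1$ followed by a gap $2$ — that excludes this and forces the dichotomy ``all gaps $2$ or no gap $2$'', and this is what raises the bound by one in the residue class $v\equiv 2\pmod 5$, where the true value $2\lfloor v/5\rfloor+2$ exceeds the covering bound $\lceil 2v/5\rceil$. The finite optimisation and the case-by-case verification of the explicit gap sequences are then routine.
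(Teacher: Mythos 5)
Your proposal is correct; I checked the gap conditions (i)--(iii) do exactly characterise blocking sets of $C_v$ (they encode ``$Q$ contains no block'' and ``the complement contains no block''), the dichotomy ``all gaps equal $2$ or no gap equals $2$'' follows as you say, the integer maximisation of $2n_3+3n_4$ subject to $4n_3+5n_4\le v$ gives $q\ge m(v)$ in every residue class (including the crucial $v\equiv 2\pmod 5$ case, where the maximum is $3\lfloor v/5\rfloor$ rather than the LP value), and your gap-sequence constructions $(4,1)^a(3,1)^b$ with an optional extra $1$ have the right length, size and properties for all $v\ge 8$. The paper works with the equivalent encoding by circular binary words avoiding the subwords $0000$, $0010$, $1101$, $1111$: it first shows every window of length $5$ has weight $2$ or $3$, which gives the bound $q\ge 2v/5$, then proves a structure lemma (the word is alternating, or all runs have length $2$ or $3$ -- this is your dichotomy plus your conditions in run-length language), and finally disposes of the one residue class where the averaging bound is too weak, $v\equiv 2\pmod 5$, by a separate ad hoc argument that weight $2\lfloor v/5\rfloor+1$ forces a run of length $1$ or $4$; the upper bounds are explicit periodic words analogous to your gap sequences. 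So the two proofs share the same skeleton (cyclic encoding, local forbidden patterns, explicit extremal examples), but your lower bound replaces the window-counting argument and the special treatment of $v\equiv 2$ by a single uniform count of gap types $n_1\ge n_3+n_4$ and a small integer programme; this buys a cleaner, case-free justification of exactly the point you flag as delicate, namely why the answer for $v\equiv 2\pmod 5$ exceeds the covering bound $\lceil 2v/5\rceil$, whereas the paper's averaging lemma is slightly quicker for the other residue classes.
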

The proof of Theorem~\ref{thm:blocking013} is much simplified by transforming the problem into an equivalent problem concerning the existence of binary words. A \emph{binary word} $\mathbf{b}$ of length $n$ is a sequence $b_0,b_1,\ldots,b_{n-1}$ where each $b_i\in\{0,1\}$. We shall be concerned with \emph{circular} binary words, where the digit $b_0$ is considered to follow $b_{n-1}$; informally, the word ``wraps round'' with period $n$. A \emph{subword} of length $m$ is a sequence $b_i,b_{i+1},\ldots,b_{i+m-1}$ where the subscripts are taken mod~$n$; in other words, the subword starts at position $i$ and wraps round if necessary. The \emph{weight} $w(\mathbf{b})$ of a word $\mathbf{b}$ is simply the number of 1s in $\mathbf{b}$. A sequence of $k$ consecutive 1s in a circular binary word with 0s at either end is called a \emph{run of length} $k$; similarly for a sequence of 0s surrounded by 1s.

To make the connection with blocking sets, we let $C_v$ be a cyclic configuration as in the statement of the theorem, and identify the point set $V$ of $C_v$ with the elements $\{0,1,\ldots,v-1\}$ of the cyclic group $\Z_v$. To each subset $S\subseteq V$ we identify a binary word $\mathbf{b}(S)=b_0,b_1,\ldots,b_{v-1}$ where $b_i=1$ if $i\in S$ and $b_i=0$ otherwise. Since each block of $C_v$ has the form $\{m,m+1,m+3\}$, it is immediate that a subset $S$ is a blocking set for $C_v$ if and only if the corresponding circular binary word $\mathbf{b}(S)$ does not contain any of the subwords 0000, 0010, 1101 or 1111. The problem of finding a minimal blocking set is therefore equivalent to finding the minimum weight of a circular binary word satisfying this forbidden subword criterion. We begin with two simple lemmas.

\begin{lemma}\label{lem:weight}
Suppose $\mathbf{b}=\mathbf{b}(S)$ is a circular binary word corresponding to a blocking set $S$ of the configuration $C_v$. Then any subword of $\mathbf{b}$ of length 5 has weight 2 or 3.
\end{lemma}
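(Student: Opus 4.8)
The plan is to argue directly from the forbidden subword characterisation: a circular binary word $\mathbf{b}=\mathbf{b}(S)$ corresponds to a blocking set of $C_v$ precisely when it avoids the subwords $0000$, $0010$, $1101$ and $1111$. I want to show that every length-$5$ subword $b_i b_{i+1} b_{i+2} b_{i+3} b_{i+4}$ has weight $2$ or $3$. The strategy is to rule out weights $0,1$ (too few $1$s) and weights $4,5$ (too many $1$s) by showing that such a configuration of five consecutive digits necessarily contains one of the four forbidden patterns.

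First I would handle the low-weight side. If a length-$5$ window has weight $0$ it literally contains $0000$, so that is immediate. If it has weight $1$, the single $1$ sits in one of five positions; in every case the window contains a block of four consecutive digits that are all $0$ (if the $1$ is at an end, the other four are $00000$ restricted appropriately; if the $1$ is in the interior the four digits on the longer side, or a suitable run, give $0000$) — more carefully, a weight-$1$ window of length $5$ always contains $0000$ or $0010$ as a length-$4$ subword, since the only length-$4$ windows with at most one $1$ are $0000$, $1000$, $0100$, $0010$, $0001$, and whichever length-$4$ subword straddles the side of the $1$ with more zeros must be $0000$ or $0010$. This is a tiny finite check. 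By symmetry (the forbidden set $\{0000,0010,1101,1111\}$ is invariant under simultaneously complementing all bits and reversing the word, since complementing $0000\leftrightarrow1111$ and $0010\leftrightarrow1101$, and reversal fixes each of these up to that complementation), the high-weight cases weight $4$ and weight $5$ follow from the low-weight cases: a window of weight $4$ or $5$ complements to one of weight $1$ or $0$, which contains $0000$ or $0010$, so the original contains $1111$ or $1101$.

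The only subtlety is making the reduction to a clean finite case analysis: I should enumerate the $\binom{5}{k}$ patterns for $k=0,1,4,5$ and exhibit a forbidden length-$4$ subword inside each, and then note the complement–reversal symmetry so that only $k=0,1$ need explicit checking. I expect the main (and really only) obstacle to be bookkeeping — being sure that for a weight-$1$ window I have correctly identified which of its two length-$4$ subwords is forbidden in each of the five positions of the lone $1$, and similarly being careful that the complement–reversal map genuinely stabilises the forbidden set. Once that symmetry is verified, the proof is a two-line finite check plus the observation that weights $2$ and $3$ are the only ones not excluded.

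\begin{proof}
A length-$5$ subword $u=b_ib_{i+1}b_{i+2}b_{i+3}b_{i+4}$ has some weight $k\in\{0,1,2,3,4,5\}$; we show $k\notin\{0,1,4,5\}$. Recall that $\mathbf{b}$ contains none of the subwords $0000$, $0010$, $1101$, $1111$, and note that the map sending a word to the reverse of its bitwise complement permutes this set of four forbidden words (it swaps $0000$ with $1111$ and $0010$ with $1101$). Hence it suffices to rule out $k\in\{0,1\}$, as the cases $k\in\{4,5\}$ then follow by applying this involution to $u$.

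If $k=0$ then $u=00000$, which contains $0000$, a contradiction. If $k=1$, write $u$ according to the position $j\in\{0,1,2,3,4\}$ of its unique $1$. For $j=0$ the subword $b_{i+1}b_{i+2}b_{i+3}b_{i+4}=0000$; for $j=4$ the subword $b_ib_{i+1}b_{i+2}b_{i+3}=0000$; for $j=1$ the subword $b_{i+1}b_{i+2}b_{i+3}b_{i+4}=0000$; for $j=3$ the subword $b_ib_{i+1}b_{i+2}b_{i+3}=0000$; and for $j=2$ the subword $b_ib_{i+1}b_{i+2}b_{i+3}=0010$. In every case $\mathbf{b}$ contains a forbidden subword, a contradiction. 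Therefore $k\in\{2,3\}$, as claimed.
\end{proof}
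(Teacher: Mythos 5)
Your proof has a genuine gap in the weight-$1$ case. The claim that every length-$5$ window of weight $1$ contains one of the forbidden length-$4$ subwords is false for the pattern $01000$ (your case $j=1$): there you assert $b_{i+1}b_{i+2}b_{i+3}b_{i+4}=0000$, but it is in fact $1000$, and the only two length-$4$ subwords of $01000$ are $0100$ and $1000$, neither of which is forbidden. So a purely local, within-window check cannot eliminate this pattern. (Your $j=3$ case is also mis-stated --- $b_ib_{i+1}b_{i+2}b_{i+3}=0001$, not $0000$ --- but that one is repairable, since $b_{i+1}\cdots b_{i+4}=0010$ is forbidden.) The paper's proof confronts exactly the $01000$ obstruction and handles it globally: the digit immediately to the left of $01000$ must be $1$ (else $0010$ appears), the next one to the left must be $0$ (else $1101$ appears), and continuing leftwards forces an alternating $\ldots 101010$ pattern; since $\mathbf{b}$ is circular and already contains $000$ inside the window, this is impossible. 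Some argument of this non-local kind (or an appeal to circularity) is unavoidable, so your proof as written does not establish the lemma.

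A secondary, fixable error: the involution you invoke, ``reverse of the bitwise complement,'' does not stabilise the forbidden set --- it sends $0010$ to $1011$ and $1101$ to $0100$, neither of which is forbidden. Plain bitwise complementation does stabilise the set ($0000\leftrightarrow 1111$, $0010\leftrightarrow 1101$), and this is the symmetry the paper uses, phrased as the fact that the complement of a blocking set is again a blocking set. With that correction your reduction of weights $4,5$ to weights $1,0$ is fine, but it still rests on the broken weight-$1$ case, so the gap above must be closed first.
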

\begin{proof}
Clearly any subword of length 5 and weight 0 contains the forbidden subword 0000. It is easy to see that the only possible length 5 subword of weight 1 is 01000. The digit immediately to the left of this subword must be 1, otherwise we get the forbidden subword 0010. Then the next digit to the left again must be 0, to avoid the forbidden subword 1101. Continuing in this way, we see that the sequence of digits reading leftwards from 01000 must be $1,0,1,0,1,0,\ldots$. But $\mathbf{b}$ is a circular word containing the subword 000, so this is impossible. Thus no subword of length 5 can have weight 1.

Since the roles of the binary digits 0 and 1 in this problem are symmetric (corresponding to the fact that if $S$ is a blocking set then so is its complement), it follows that no length 5 subword can have weight 4 or 5 either.
\end{proof}
\begin{lemma}\label{lem:bsform}
Suppose $\mathbf{b}=\mathbf{b}(S)$ is a circular binary word corresponding to a blocking set $S$ of the configuration $C_v$. Then $\mathbf{b}$ has one of the following forms:
\begin{enumerate}[nosep,label=(\alph*)]
	\item $01010101\ldots$ or $10101010\ldots$ (possible only if $v$ is even);
	\item a sequence of 0s and 1s in runs of length 2 or 3 only.
\end{enumerate}
\end{lemma}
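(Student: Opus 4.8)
The plan is to run a short case analysis on the lengths of the runs of $\mathbf{b}$, using only the four forbidden subwords $0000$, $0010$, $1101$, $1111$ directly (the propagation argument needed is essentially the one already carried out in the proof of Lemma~\ref{lem:weight}). First I would record the two trivial bounds: since $1111$ is forbidden, no run of $1$s has length $\geq 4$, and since $0000$ is forbidden, no run of $0$s has length $\geq 4$. Hence every run of $\mathbf{b}$ has length $1$, $2$ or $3$, and it remains only to show that if some run has length $1$ then $\mathbf{b}$ has the alternating form (a), which in turn forces $v$ to be even.

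So suppose position $i$ carries an isolated $1$, i.e.\ $b_{i-1}=0$, $b_i=1$, $b_{i+1}=0$ (indices mod $v$). Reading leftwards, the window $b_{i-2}b_{i-1}b_ib_{i+1}=b_{i-2}010$ must not be the forbidden word $0010$, so $b_{i-2}=1$; and then $b_{i-3}b_{i-2}b_{i-1}b_i=b_{i-3}101$ must not be the forbidden word $1101$, so $b_{i-3}=0$. But now $b_{i-3}=0$, $b_{i-2}=1$, $b_{i-1}=0$, so position $i-2$ again carries an isolated $1$, and the same two inferences may be iterated. Running once around the cycle this yields $b_{i-2k}=1$ and $b_{i-2k-1}=0$ for all $k$. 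If $v$ is odd the map $k\mapsto i-2k$ hits every residue mod $v$, forcing in particular $b_{i-1}=1$, contradicting $b_{i-1}=0$; so $v$ must be even, and then the same relations say precisely that $\mathbf{b}$ alternates $\ldots0101\ldots$, which is form (a). Since the set $\{0000,0010,1101,1111\}$ is invariant under interchanging $0$ and $1$ (equivalently, the complement of a blocking set is again a blocking set), the identical argument applied to an isolated $0$ likewise forces $\mathbf{b}$ to be alternating and $v$ to be even.

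Finally, if $\mathbf{b}$ is not of form (a), then by the previous paragraph it contains neither an isolated $1$ nor an isolated $0$, so every run has length $\geq 2$; combined with the bounds from the first paragraph, every run has length exactly $2$ or $3$, which is form (b). (One also checks at once that the alternating word contains none of the four forbidden subwords, so form (a) really does occur whenever $v$ is even — this is the source of the parity restriction stated in (a).)

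I expect the only delicate point to be the propagation step: one must check that the paired inference ``$b_{i-1}=0,\ b_i=1,\ b_{i+1}=0 \ \Rightarrow\ b_{i-2}=1,\ b_{i-3}=0$'' genuinely reproduces an isolated $1$ at position $i-2$, so that the induction closes, and that iterating it around a cycle of \emph{odd} length produces the claimed contradiction rather than merely a local alternating pattern. Everything else is routine bookkeeping with the four forbidden windows.
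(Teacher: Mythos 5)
Your proposal is correct and follows essentially the same route as the paper: the paper also rules out runs of length $\geq 4$ via the forbidden words $0000$ and $1111$, and handles a run of length $1$ by exactly your leftward propagation (forced by $0010$ and $1101$), which it inherits from the proof of Lemma~\ref{lem:weight}, concluding that any occurrence of $010$ or $101$ forces the alternating form (a) and hence $v$ even. Your write-up just makes the propagation and the odd-$v$ contradiction explicit rather than citing the earlier lemma's argument.
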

\begin{proof}
The proof of Lemma~\ref{lem:weight} shows that whenever the subword 010 appears in $\mathbf{b}$, then $\mathbf{b}$ must be of type (a). A similar argument holds for the subword 101. Thus any run length of 1 forces type (a), and this is only possible if $v$ is even. Run lengths of 4 or greater are ruled out by the forbidden subwords 0000 and 1111, so the only remaining possibility is type (b). 
\end{proof}
We are now ready to complete the proof of the theorem.
\begin{proof}[Proof of Theorem~\ref{thm:blocking013}]
Suppose $\mathbf{b}=\mathbf{b}(S)$ is a circular binary word corresponding to a blocking set $S$ of the configuration $C_v$. A simple counting argument in conjunction with Lemma~\ref{lem:weight} shows that $|S|=w(\mathbf{b})\geq\frac{2v}{5}$. So writing $|S|=2\lfloor\frac{v}{5}\rfloor+\varepsilon$, it remains to find the minimum value of $\varepsilon$ in all cases. We proceed by considering all the congruence classes mod 5.

If $v\equiv 0\!\pmod{5}$, then $\mathbf{b}=11000\ 11000\ 11000 \ldots$ satisfies the conditions of Lemma~\ref{lem:bsform} and so $\varepsilon=0$.

If $v\equiv 1\!\pmod{5}$, then we know $\varepsilon\geq 1$ and $\mathbf{b}=11000\ 11000 \ldots 11000\ 1$ satisfies the conditions of Lemma~\ref{lem:bsform} and so $\varepsilon=1$.

If $v\equiv 2\!\pmod{5}$, then we know $\varepsilon\geq 1$ but an examination of all the possibilities shows that it is not possible to add a single 1 and a single 0 to a word of the form $\mathbf{b}=11000\ 11000\ 11000 \ldots$ without creating a run of length 1 or 4. Thus $\varepsilon\geq 2$, and $\mathbf{b}=111000\ 111000\ 11000 \ldots 11000$ satisfies the conditions of Lemma~\ref{lem:bsform} and so $\varepsilon=2$. 

If $v\equiv 3\!\pmod{5}$, then we know $\varepsilon\geq 2$ and $\mathbf{b}=1100\ 1100\ 11000 \ldots 11000$ satisfies the conditions of Lemma~\ref{lem:bsform} and so $\varepsilon=2$.

If $v\equiv 4\!\pmod{5}$, then we know $\varepsilon\geq 2$ and $\mathbf{b}=11000\ 1100\ 11000 \ldots 11000$ satisfies the conditions of Lemma~\ref{lem:bsform} and so $\varepsilon=2$.
\end{proof}

\begin{corollary}
Let $k\geq 1$. Then there exist:
\begin{enumerate}[nosep,label=(\alph*)]
	\item a configuration $v_3$ with a minimal blocking set of size exactly $\lceil\frac{v}{3}\rceil+k$; and
	\item a configuration $v_3$ with a minimal blocking set of size exactly $\lfloor\frac{v}{2}\rfloor-k$.
\end{enumerate}
\end{corollary}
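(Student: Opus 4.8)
The plan is to read the corollary off Theorem~\ref{thm:blocking013} by evaluating $m(v)$ on two well-chosen arithmetic progressions of $v$. The point is that $m(v)=2\lfloor v/5\rfloor+\varepsilon$ behaves like $2v/5$, which lies strictly between $v/3$ and $v/2$; hence both differences $m(v)-\lceil v/3\rceil$ and $\lfloor v/2\rfloor-m(v)$ grow linearly with $v$, and it only remains to choose residues on which they take \emph{every} positive integer value with no rounding slack.

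For part~(a) I would take $v=15k$ with $k\geq 1$, so that $v\geq 8$ and $v\equiv 0\pmod 5$. Then Theorem~\ref{thm:blocking013} gives $\varepsilon=0$ and $m(v)=2\lfloor 15k/5\rfloor=6k$, while $\lceil v/3\rceil=5k$. Thus $C_{15k}$ is a symmetric configuration $(15k)_3$ whose minimal blocking set has size exactly $6k=\lceil v/3\rceil+k$, which is what part~(a) asks for.

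For part~(b) I would take $v=10k$ with $k\geq 1$, so again $v\geq 8$ and $v\equiv 0\pmod 5$. Then $\varepsilon=0$ and $m(v)=2\lfloor 10k/5\rfloor=4k$, while $\lfloor v/2\rfloor=5k$. Thus $C_{10k}$ is a symmetric configuration $(10k)_3$ whose minimal blocking set has size exactly $4k=\lfloor v/2\rfloor-k$, proving part~(b).

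Given Theorem~\ref{thm:blocking013}, there is no real obstacle here; the only thing to be careful about is the choice of residue class. One must avoid progressions on which $m(v)$, $\lceil v/3\rceil$ or $\lfloor v/2\rfloor$ fail to be exact linear functions of the parameter (which could make the relevant gap skip values, or attain only all sufficiently large values rather than all of $\{1,2,3,\ldots\}$), and the choices $v\equiv 0\pmod{15}$ in~(a) and $v\equiv 0\pmod{10}$ in~(b) are clean precisely because $5\mid v$ forces $\varepsilon=0$ while $3\mid v$ and $2\mid v$ respectively eliminate the ceiling and the floor.
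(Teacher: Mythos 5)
Your proposal is correct and matches the paper's proof exactly: the paper also invokes Theorem~\ref{thm:blocking013} with $v=15k$ for part~(a) and $v=10k$ for part~(b). Your explicit verification that $m(15k)=6k=\lceil v/3\rceil+k$ and $m(10k)=4k=\lfloor v/2\rfloor-k$ is just the computation the paper leaves to the reader.
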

\begin{proof}
We use Theorem~\ref{thm:blocking013}. For (a), take $v=15k$ and for (b), take $v=10k$.
\end{proof}

\subsection{Configurations without blocking sets}\label{sec:bsfree}
We now turn our attention to the case where $\chi_w=3$, i.e. to symmetric configurations with block size 3 which have no blocking set. This is an old problem which goes back some 30 years. It has appeared three times as a problem at the British Combinatorial Conference. The first time was as Problem 194 in the Proceedings of the 13th Conference~\cite{BCC13probs}, proposed by H.~ Gropp and originated by J.~W.~DiPaola and H.~Gropp. At that time there were thirteen unresolved values: 15, 16, 17, 18, 20, 23, 24, 26, 29, 30, 32, 38, 44. It appeared again as Problem 228 in the next Proceedings~\cite{BCC14probs}, by which time the five largest values had been resolved positively due to the work of Kornerup~\cite{Kornerup}. Finally in the Proceedings of the 16th Conference~\cite{BCC16probs}, Problem 333, Gropp asked whether there exists a symmetric configuration $16_3$ without a blocking set, having reported that the case of such a configuration $15_3$ had been resolved negatively.

All configurations $v_3$ for $7 \leq v \leq 18$ were enumerated by Betten, Brinkmann and Pisanski~\cite{Betten2000} in a paper published in 2000, leaving only the values $20, 23, 24, 26$ unresolved.  The problem was finally solved in 2003 by Funk et alia~\cite{funk2003det}.  A bipartite graph $G$ with bipartition $\{X, Y\}$ such that $| X | = | Y | = n$ is said to be \emph{det-extremal} if its $n\times n$ biadjacency matrix $A$ satisfies the equation $|\det(A)|=\per(A)$. (In our context, the biadjacency matrix of the Levi graph of a configuration $v_3$ is simply the $v\times v$ incidence matrix of the configuration.) Thomassen~\cite{thomassen1986sign} pointed out that a symmetric $k$-configuration is blocking set free if and only if its Levi graph is det-extremal.  In~\cite{funk2003det} the following theorem was proved from which it is an immediate corollary that there are no symmetric configurations $v_3$ without a blocking set for $v = 20, 23, 24, 26$.

\begin{theorem}[Funk, Jackson, Labbate and Sheehan]\label{thm:detextremal}
There exists a det-extremal connected cubic bipartite graph of order $2v$ if and only if $v \in\{ 7, 13, 19, 21, 22, 25\}$ or $v \geq 27$.  
\end{theorem}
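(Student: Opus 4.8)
The plan is to prove the two directions separately, where the "only if" direction is essentially a finite check together with a parity/counting obstruction, and the "if" direction is a construction.

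For the "only if" direction, I would first dispose of the small cases by exhaustive search: the connected cubic bipartite graphs of order $2v$ with $v\le 26$ are tabulated (equivalently, the connected symmetric configurations $v_3$ — noting a det-extremal such graph need not come from a configuration, since the girth-6 condition may fail, but one can check cubic bipartite graphs directly), so one verifies computationally that the only values $v\le 26$ admitting a det-extremal connected cubic bipartite graph are $7,13,19,21,22,25$; in particular $v\in\{8,\dots,12,14,\dots,18,20,23,24,26\}$ and the non-bipartite-girth cases all fail. Independently, one wants a structural reason ruling out certain residues. The key observation is that $|\det A|=\per A$ forces every nonzero term in the permanent expansion to carry the \emph{same} sign; passing to the associated signing of the Levi graph, this says the graph admits a Pfaffian orientation in which moreover $\det A=\pm\per A$ with equality of absolute values, and a theorem of Thomassen (already cited, \cite{thomassen1986sign}) characterises such graphs via their cycle structure — every cycle through a perfect matching edge must be "oddly oriented," and for cubic bipartite graphs this is extremely restrictive. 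The structural heart is then to show that a det-extremal connected cubic bipartite graph must decompose, away from finitely many sporadic graphs, as a chain of copies of a single small "gadget" joined in a cyclic fashion, and that the number of points contributed per gadget is $6$ (hence $v\ge 27$ is attainable for all such $v$, while smaller $v$ forces one of the sporadic cases).

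For the "if" direction I would exhibit the six small graphs explicitly (the Heawood graph for $v=7$, the Levi graph of the Pappus-type cyclic configuration for $v=13$, the graph F038A from the excerpt for $v=19$, and the det-extremal graphs on $42,44,50$ vertices for $v=21,22,25$) and check $|\det A|=\per A$ for each by direct computation. For all $v\ge 27$ the plan is to give a gluing construction: take the gadget isolated above (a det-extremal cubic bipartite "brick" on $12$ vertices with two pairs of pendant half-edges), and splice $k$ copies of it into a ring to obtain a det-extremal connected cubic bipartite graph on $6k$ vertices; this handles $v\equiv 0\pmod 6$ with $v\ge 27$, i.e. $v\in\{30,36,\dots\}$. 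To cover the other residues mod $6$ one needs one or two alternative gadgets, or equivalently to insert a small number of "defect" bricks contributing $1,\dots,5$ extra points while preserving det-extremality; concretely one takes the base graph for $v_0\in\{27,28,29,31,32,33\}$ (these six being realisable by ad hoc constructions from the sporadic graphs via the configuration-extension operation described in Section~\ref{sec:weak}, which can be checked to preserve det-extremality) and then adds copies of the $6$-vertex brick to step up by multiples of $6$. One must verify that the extension operation indeed preserves det-extremality: this is the natural analogue of Martinetti's construction at the level of incidence matrices, and amounts to a $3\times 3$ cofactor bookkeeping showing $\det$ and $\per$ both get multiplied by the same factor.

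The main obstacle is the "only if" direction — specifically, proving that a det-extremal connected cubic bipartite graph \emph{must} be built from the single $6$-vertex brick up to finitely many exceptions. Ruling out individual small $v$ is mechanical, but the general structural statement requires showing that Thomassen's Pfaffian/sign condition, combined with $3$-regularity, $2$-connectivity (one reduces to the $2$-connected case first, since cut-vertices in a cubic bipartite graph split off smaller det-extremal pieces), and the permanent-determinant equality, leaves essentially no freedom: every such graph has a "brick-chain" decomposition. I expect this to go via analysing the perfect matchings — $|\det A|=\per A$ means the number of perfect matchings equals $|\det A|$, which for det-extremal graphs is tightly controlled — and showing that a cubic bipartite graph whose matching count realises this bound must contain a separating $4$-edge-cut isolating a brick, then inducting. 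The bookkeeping to make "the brick contributes exactly $6$ to $v$" precise, and to rule out bricks of other sizes, is where the real work lies; everything else is either a finite computation or a routine application of the extension lemma.
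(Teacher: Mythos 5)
Before assessing your argument, note that the paper does not prove Theorem~\ref{thm:detextremal} at all: it is quoted verbatim from Funk, Jackson, Labbate and Sheehan~\cite{funk2003det}. The paper only re-derives the easy existence half for configurations with $v\geq 27$ via the Bollob\'as--Harris constructions (Theorems~\ref{thm:stitch2} and~\ref{thm:stitch3}) together with explicit $29_3$, $30_3$, $32_3$ examples, and it quotes McCuaig's result (Theorem~\ref{thm:mccuaig}) for the structural side. Judged on its own merits, your proposal has two genuine gaps. First, your "if" direction leans on the claim that the Martinetti-style extension operation of Section~\ref{sec:weak} preserves det-extremality, and that is false: if it did, applying it to the unique det-extremal $13_3$ configuration would produce a blocking-set-free (equivalently det-extremal) $14_3$, which does not exist (this is part of the very spectrum being proved, and is confirmed by the enumeration of~\cite{Betten2000}). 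The "$3\times 3$ cofactor bookkeeping" does not go through, because the operation alters the matching structure in a way that in general introduces permanent terms of both signs. The correct gluing tools are the vertex-sum, i.e.\ the $v+v'-1$ construction, and the odd-sum construction (Theorems~\ref{thm:stitch2} and~\ref{thm:stitch3}), which do preserve the property; these give $27$, $28$, $31$ and steps of $+6$, but $29$, $30$, $32$ require separate ad hoc constructions (Kornerup; exhibited in the paper), so even the existence direction needs more than you supply. (Minor slips: the $v=13$ graph is the vertex-sum of two Heawood graphs, not a "Pappus-type cyclic" configuration, and a vertex-sum with a Heawood graph splices in a $13$-vertex piece adding $6$ to $v$, not a $12$-vertex brick.)

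Second, and more seriously, the structural heart of your "only if" direction --- that away from finitely many sporadic graphs every det-extremal connected cubic bipartite graph is a cyclic chain of copies of a single small brick contributing $6$ to $v$ --- is both unproved and not the true structure. The actual characterization, due to McCuaig~\cite{mccuaig2000even} and quoted as Theorem~\ref{thm:mccuaig}, is that the $3$-connected det-extremal cubic bipartite graphs are exactly the iterated vertex-sums of Heawood graphs, which forces $v\equiv 1\pmod 6$ in that case; the admissible values $21,22,25$ and the full set of residues for $v\geq 27$ arise from a separate and delicate analysis of graphs with $2$-edge-cuts carried out in~\cite{funk2003det}. That analysis rests on the even-cycle/Pfaffian-orientation machinery and cannot be replaced by the heuristic "the number of perfect matchings equals $|\det A|$, hence there is a separating $4$-edge-cut isolating a brick, then induct"; no argument is given for why such a cut must exist, why the isolated piece has the claimed size, or why bricks of other sizes are impossible. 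Without this, the exclusion of $v=20,23,24,26$ is not established (exhaustive search over all connected cubic bipartite graphs up to order $52$, not just girth-$6$ ones, is far beyond a routine computation), and the completeness of your decomposition --- which is what the "only if" direction amounts to --- remains unsupported.
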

The four values $20, 23, 24, 26$ indeed seem to be the most problematic.  If $v \geq 27$, it is easy to give a short self-contained account to prove that there exists a symmetric configuration $v_3$ with no blocking set and we do this below beginning with two constructions from~\cite{Bollobas1985} which we present as theorems.

\begin{theorem}[Bollob\'as and Harris]\label{thm:stitch2}
If there exist configurations $v_3$ and $(v')_3$ without a blocking set, then there exists a configuration $(v+v'-1)_3$ without a blocking set.
\end{theorem}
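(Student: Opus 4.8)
The plan is to take the two blocking-set-free configurations $\mathcal{C}$ on point set $V$ (with $|V|=v$) and $\mathcal{C}'$ on point set $V'$ (with $|V'|=v'$), assume without loss of generality that they are disjoint, and glue them along a single point to obtain a configuration on $v+v'-1$ points. The natural mechanism is a variant of Martinetti's extension operation running in reverse: pick a point $a\in V$ lying on three blocks of $\mathcal{C}$ and a point $a'\in V'$ lying on three blocks of $\mathcal{C}'$, and amalgamate $a$ and $a'$ into a single point. Naively this would give a point of degree $6$, so instead I would first delete one block through $a$ in $\mathcal{C}$ and one block through $a'$ in $\mathcal{C}'$, then reintroduce the six ``loose'' points via new blocks through the identified vertex. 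Concretely: if $\{a,x_1,x_2\}$ is a block of $\mathcal{C}$ and $\{a',y_1,y_2\}$ a block of $\mathcal{C}'$, remove these two blocks, identify $a$ with $a'$ calling the new point $c$, and add the three blocks $\{c,x_1,x_2\}$, $\{c,y_1,y_2\}$ — wait, that only restores degree $2+2$; the correct count is that $c$ now lies on the two surviving blocks through $a$, the two surviving blocks through $a'$ (degree $4$), so one instead keeps all original blocks through $a$ and $a'$ except that we must drop down from degree $6$ to $3$. The clean version: delete the three blocks through $a$ and the three blocks through $a'$, keeping their $6$ partner-pairs; the new point $c$ gets three new blocks, and the six freed points $x_1,x_2,x_3$ (partners of $a$) and $y_1,y_2,y_3$ (partners of $a'$) must be redistributed. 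This is exactly three applications of the operation in the introduction; I would instead cite the cleaner ``stitching'' already implicit there and verify the two required properties below.

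The key steps, in order: (1) Set up the amalgamation precisely and check it yields a genuine configuration $(v+v'-1)_3$ — each point on $3$ blocks, each block of size $3$, no repeated pair. The linearity condition is the place to be careful: a pair $\{x_i,y_j\}$ with $x_i\in V$, $y_j\in V'$ is brand new so it cannot repeat, and pairs internal to $V$ or to $V'$ are untouched except near $a,a'$, where the construction must be arranged so that $x_i$ and $x_j$ were not already co-blocked in $\mathcal{C}$; this is guaranteed because they were the ``far'' points of distinct blocks through $a$, and by the configuration axiom two blocks share at most the point $a$. (2) Check connectedness: $\mathcal{C}$ and $\mathcal{C}'$ are each connected and now share the point $c$, and the new blocks link the $x_i$'s and $y_j$'s to $c$, so the whole structure is connected. (3) The main point: show the amalgamated configuration has no blocking set. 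Suppose for contradiction that $Q$ is a blocking set. Restrict $Q$ to $V$ and to $V'$; the idea is that $Q\cap V$ (possibly after adjusting the status of the point $c$) must block all blocks of $\mathcal{C}$, contradicting that $\mathcal{C}$ is blocking-set-free — and symmetrically for $\mathcal{C}'$. The subtlety is the handful of blocks that were deleted or created near $c$: one shows that whatever colour ($Q$ or $\overline{Q}$) the point $c$ receives, one of the two sides inherits a valid $2$-colouring of all its original blocks. Because the three new blocks through $c$ partition $\{x_1,x_2,x_3,y_1,y_2,y_3\}$ into pairs each completed by $c$, the constraint imposed by $Q$ on these is mild and can be absorbed into the reconstruction of a blocking set on whichever side is ``compatible'' with $c$'s colour.

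I expect step (3), and within it the bookkeeping around the point $c$ and the $O(1)$ modified blocks, to be the main obstacle: one must argue that a hypothetical $2$-colouring of the glued configuration can always be pushed down to a $2$-colouring of at least one of the two original configurations, and this requires checking that the deleted blocks $\{a,x_1,x_2\}$, $\{a',y_1,y_2\}$ get re-blocked correctly once $c$'s colour is fixed. Depending on $c$'s colour one side's deleted block may fail to be blocked by $Q$ restricted to that side; the fix is to choose, among the three blocks through $a$ (resp. $a'$) that could play the role of the ``deleted'' block, one whose far points' $Q$-status cooperates — or, more robustly, to observe that since $a$ had degree $3$ in $\mathcal{C}$ and $\mathcal{C}$ has no blocking set, the local structure forces at least one partner of $a$ into each colour class, which supplies the needed point. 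Everything else is routine verification of the configuration axioms.
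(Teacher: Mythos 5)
Your construction is never actually pinned down, and none of the variants you float produces a symmetric configuration, so the proof cannot get off the ground. Identifying $a$ with $a'$ after deleting one block on each side leaves the merged point with degree $4$ and four points with degree $2$ (you notice this), but your ``clean version'' fails too: a point of a $v_3$ lies on three blocks whose other points are \emph{six} distinct points, so deleting the three blocks through $a$ and the three through $a'$ frees six \emph{pairs} (twelve points), not the six points $x_1,x_2,x_3,y_1,y_2,y_3$ you list. Adding only three new blocks through $c$ restores $3+6=9$ of the $3+12=15$ missing incidences and gives $v+v'-6+3=v+v'-3$ blocks rather than $v+v'-1$, so the result is not a configuration. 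Nor is this ``three applications of the operation in the introduction'': Martinetti's extension adds a point each time, so starting from the disjoint union on $v+v'$ points it moves the count the wrong way. Finally, the heart of the matter, step (3), is only a plan with an acknowledged unresolved obstacle (``one side's deleted block may fail to be blocked\dots''), and since it depends on the unspecified construction it cannot be salvaged as written.

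The workable construction is a point--block splice, not a point--point amalgamation. Delete a block $B=\{x_1,x_2,x_3\}$ from the first configuration, delete a point $x'$ together with its three blocks $B'_1,B'_2,B'_3$ from the second, and add the blocks $B''_i=(B'_i\setminus\{x'\})\cup\{x_i\}$ for $i=1,2,3$ (in Levi-graph language this is the vertex-sum: remove one block vertex and one point vertex and match up their neighbourhoods). Regularity and linearity are then immediate, and the blocking-set argument is short: if $Q$ were a blocking set of the new configuration, every block of $\mathcal{B}\setminus\{B\}$ is blocked by $Q\cap V$, so since the $v_3$ has no blocking set the block $B$ must be monochromatic, say $x_1,x_2,x_3\in Q$; then each $B''_i$ being blocked forces $B'_i\setminus\{x'\}$ to meet the complement of $Q$, whence $(Q\cap V')\cup\{x'\}$ is a blocking set of the $(v')_3$, a contradiction. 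Note how this sidesteps the case analysis on the colour of your merged point $c$: no point is shared between the two sides, and the only transfer needed is assigning $x'$ the common colour of $x_1,x_2,x_3$.
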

\begin{proof}
Denote the points and blocks of the configuration $v_3$ (resp. $(v')_3$) by $V$ and $\mathcal{B}$ (resp. $V'$ and $\mathcal{B}'$). Choose $B\in\mathcal{B}$ and suppose that points $x_1,x_2,x_3\in B$. Further choose $x'\in V'$ and suppose that $x'$ is contained in blocks $B'_1,B'_2,B'_3$. Define new blocks $B''_i=(B'_i\setminus\{x'\})\cup\{x_i\},i=1,2,3$. Then $V\cup(V'\setminus\{x'\})$ and $(\mathcal{B}\setminus\{B\})\cup(\mathcal{B}'\setminus\{B'_1,B'_2,B'_3\})\cup\{B''_1,B''_2,B''_3\}$ are the points and blocks of a configuration $(v+v'-1)_3$ which it is easily verified has no blocking set.
\end{proof}
\begin{theorem}[Bollob\'as and Harris]\label{thm:stitch3}
If there exist configurations $(v^1)_3,(v^2)_3,\ldots,(v^{2k+1})_3$ without a blocking set, then there exists a configuration $(v^1+v^2+\cdots+v^{2k+1})_3$ without a blocking set.
\end{theorem}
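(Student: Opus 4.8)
The plan is to combine the $2k+1$ given configurations into a single configuration by a \emph{cyclic stitching} operation, a variant of the two-configuration stitch of Theorem~\ref{thm:stitch2} in which the pieces are wired around an odd cycle. Write $n=2k+1$ (so $n\geq 3$, as $k\geq 1$) and, relabelling points if necessary, take the point sets $V_1,\dots,V_n$ of the configurations $(v^i)_3$ to be pairwise disjoint. In each $(v^i)_3$ choose an arbitrary block $B_i=\{x_1^i,x_2^i,x_3^i\}$. Now delete the blocks $B_1,\dots,B_n$ and, with indices read modulo $n$, introduce for each $i$ the new block $L_i=\{x_1^i,x_2^i,x_3^{i+1}\}$. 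The first step is to check that the resulting structure on $V_1\cup\cdots\cup V_n$ is a connected symmetric configuration with $v^1+\cdots+v^n$ points: each of the $3n$ points $x_j^i$ loses exactly the block $B_i$ and gains exactly one new block (namely $L_i$ if $j\in\{1,2\}$ and $L_{i-1}$ if $j=3$), every other point is untouched, the blocks $L_i$ are pairwise disjoint and meet every surviving old block in at most one point, no pair of points lies in two blocks, and the $L_i$ tie the $n$ pieces together into a connected structure. The block count is $\big(\sum_i v^i\big)-n+n=\sum_i v^i$, as required.

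For the absence of a blocking set, I would argue by contradiction. A blocking set of the combined configuration is exactly a 2-colouring $\phi$ of $V_1\cup\cdots\cup V_n$ with no monochromatic block. Fix $i$ and restrict $\phi$ to $V_i$. Since $(v^i)_3$ has no blocking set, $\phi|_{V_i}$ must leave some block of $(v^i)_3$ monochromatic; but every block of $(v^i)_3$ other than $B_i$ survives in the combined configuration and so is non-monochromatic under $\phi$. Hence that monochromatic block must be $B_i$ itself, i.e.\ $x_1^i,x_2^i,x_3^i$ all receive the same colour $\gamma_i\in\{0,1\}$. Apply this to the new blocks: in $L_i=\{x_1^i,x_2^i,x_3^{i+1}\}$ the first two points have colour $\gamma_i$ and the third has colour $\gamma_{i+1}$, so non-monochromaticity of $L_i$ forces $\gamma_i\neq\gamma_{i+1}$, for every $i\in\Z_n$. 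Thus $\gamma$ would be a proper 2-colouring of the odd cycle $C_n$, which is impossible; this contradiction shows the combined configuration has no blocking set.

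The verification of the configuration axioms for the stitched structure (3-regularity, linearity, connectedness) is a routine incidence check along the lines sketched above. The only genuine idea is the observation that a blocking set of the combined configuration forces each deleted block $B_i$ to be monochromatic, together with the choice to wire the deleted blocks into a cycle (rather than introducing a new point and new blocks as a two-at-a-time stitch would require); the odd length $n=2k+1$ is precisely what makes the constraint system $\gamma_i\neq\gamma_{i+1}$ unsolvable, which is where the parity hypothesis enters. I do not expect any obstacle beyond bookkeeping the incidences carefully.
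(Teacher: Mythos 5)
Your construction is exactly the paper's: your $L_i=(B_i\setminus\{x_3^i\})\cup\{x_3^{i+1}\}$ is the paper's block $B^i_*=(B^i\setminus\{x^i\})\cup\{x^{i+1}\}$ with $x^i=x_3^i$, so the proposal is correct and takes essentially the same route. The only difference is that you spell out the blocking-set-freeness (each deleted block forced monochromatic, giving an impossible proper 2-colouring of the odd cycle), which the paper leaves as ``easy to verify''.
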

\begin{proof}
Denote the points and blocks of the configuration $(v^i)_3$ by $V^i$ and $\mathcal{B}^i$ respectively, $i=1,2,\ldots,2k+1$. For each $i$ choose a block $B^i\in\mathcal{B}^i$ and a point $x^i\in B^i$. Define a new block $B^i_*=(B^i\setminus \{x^i\})\cup\{x^{i+1}\}$, superscript arithmetic modulo $2k+1$. Then $\displaystyle\bigcup_{i=1}^{2k+1}V^i$ and $\displaystyle\bigcup_{i=1}^{2k+1}(\mathcal{B}^i\setminus\{B^i\})\cup\{B^i_*\}$ are the points and blocks of a configuration $(v^1+v^2+\cdots+v^{2k+1})_3$. Again it is easy to verify that this has no blocking set.
\end{proof}

We note that the construction of Theorem~\ref{thm:stitch3} was reported independently by Abbott and Hare~\cite{Abbott1999}, referencing an earlier paper of Abbott and Liu~\cite{Abbott1978}.

In order to implement the constructions we begin with three basic systems.  From~\cite{Betten2000}, in the range $7 \leq v \leq 18$ there exist only two symmetric configurations $v_3$ with no blocking set: the unique $7_3$ configuration (Fano plane) and a $13_3$ configuration obtained from two copies of it using Theorem~\ref{thm:stitch2}.

The blocks of the latter system can be represented by the following triples:

\begin{config}
\-\ 012 034 056 135 146 236 278 49c 5ab 79b 7ac 89a 8bc.
\end{config}

A symmetric configuration $22_3$ with no blocking set was given by Dorwart and Gr\"unbaum~\cite{Dorwart1992}; it is illustrated in Figure~\ref{fig:BS22} and as is evident, is obtained by merging three Fano planes. Its blocks are as follows:

\begin{config}
	\-\ 012 034 056 135 146 236 24l 58f 79c 7ak 7bl 89a 8ck 9bk abc deh dfj dgi egj eil fgh hij
\end{config}

\begin{figure}\centering
	\begin{tikzpicture}[x=0.2mm,y=-0.2mm,inner sep=0.2mm,scale=0.5,thick,vertex/.style={circle,draw,minimum size=8,fill=white}]
		\node at (245,349) [vertex,fill=white] (v1) {};
		\node at (196,374) [vertex,fill=black] (v2) {};
		\node at (140,374) [vertex,fill=white] (v3) {};
		\node at (91,158) [vertex,fill=black] (v4) {};
		\node at (57,201) [vertex,fill=white] (v5) {};
		\node at (44,254) [vertex,fill=black] (v6) {};
		\node at (57,307) [vertex,fill=white] (v7) {};
		\node at (91,349) [vertex,fill=black] (v8) {};
		\node at (291,254) [vertex,fill=white] (v9) {};
		\node at (279,201) [vertex,fill=black] (v10) {};
		\node at (245,158) [vertex,fill=white] (v11) {};
		\node at (196,134) [vertex,fill=black] (v12) {};
		\node at (140,134) [vertex,fill=white] (v13) {};
		\node at (279,307) [vertex,fill=black] (v14) {};
		\node at (459,696) [vertex,fill=black] (v15) {};
		\node at (409,720) [vertex,fill=white] (v16) {};
		\node at (355,720) [vertex,fill=black] (v17) {};
		\node at (305,504) [vertex,fill=white] (v18) {};
		\node at (270,547) [vertex,fill=black] (v19) {};
		\node at (258,600) [vertex,fill=white] (v20) {};
		\node at (270,654) [vertex,fill=black] (v21) {};
		\node at (305,696) [vertex,fill=white] (v22) {};
		\node at (505,600) [vertex,fill=black] (v23) {};
		\node at (492,547) [vertex,fill=white] (v24) {};
		\node at (459,504) [vertex,fill=black] (v25) {};
		\node at (409,480) [vertex,fill=white] (v26) {};
		\node at (355,480) [vertex,fill=black] (v27) {};
		\node at (492,654) [vertex,fill=white] (v28) {};
		\node at (658,349) [vertex,fill=white] (v29) {};
		\node at (610,374) [vertex,fill=black] (v30) {};
		\node at (555,374) [vertex,fill=white] (v31) {};
		\node at (505,158) [vertex,fill=black] (v32) {};
		\node at (470,201) [vertex,fill=white] (v33) {};
		\node at (459,254) [vertex,fill=black] (v34) {};
		\node at (470,307) [vertex,fill=white] (v35) {};
		\node at (505,349) [vertex,fill=black] (v36) {};
		\node at (705,254) [vertex,fill=white] (v37) {};
		\node at (692,201) [vertex,fill=black] (v38) {};
		\node at (658,158) [vertex,fill=white] (v39) {};
		\node at (610,134) [vertex,fill=black] (v40) {};
		\node at (555,134) [vertex,fill=white] (v41) {};
		\node at (692,307) [vertex,fill=black] (v42) {};
		\path
		(v1) edge (v2)
		(v2) edge (v3)
		(v3) edge (v8)
		(v4) edge (v5)
		(v4) edge (v13)
		(v5) edge (v6)
		(v6) edge (v7)
		(v7) edge (v8)
		(v9) edge (v10)
		(v9) edge (v14)
		(v10) edge (v11)
		(v11) edge (v12)
		(v12) edge (v13)
		(v7) edge (v14)
		(v2) edge (v5)
		(v3) edge (v10)
		(v6) edge (v11)
		(v4) edge (v9)
		(v1) edge (v12)
		(v8) edge (v13)
		(v15) edge (v16)
		(v15) edge (v28)
		(v16) edge (v17)
		(v17) edge (v22)
		(v18) edge (v19)
		(v18) edge (v27)
		(v19) edge (v20)
		(v20) edge (v21)
		(v21) edge (v22)
		(v23) edge (v24)
		(v23) edge (v28)
		(v24) edge (v25)
		(v25) edge (v26)
		(v21) edge (v28)
		(v16) edge (v19)
		(v17) edge (v24)
		(v20) edge (v25)
		(v18) edge (v23)
		(v15) edge (v26)
		(v22) edge (v27)
		(v29) edge (v30)
		(v29) edge (v42)
		(v30) edge (v31)
		(v31) edge (v36)
		(v32) edge (v33)
		(v32) edge (v41)
		(v33) edge (v34)
		(v34) edge (v35)
		(v37) edge (v38)
		(v37) edge (v42)
		(v38) edge (v39)
		(v39) edge (v40)
		(v40) edge (v41)
		(v35) edge (v42)
		(v30) edge (v33)
		(v31) edge (v38)
		(v34) edge (v39)
		(v32) edge (v37)
		(v29) edge (v40)
		(v36) edge (v41)
		(v1) edge (v27)
		(v36) edge (v26)
		(v35) edge (v14)
		;
	\end{tikzpicture}
	\quad
		\begin{tikzpicture}[x=0.2mm,y=-0.2mm,inner sep=0.2mm,scale=0.5,thick,vertex/.style={circle,draw,minimum size=8,fill=white}]
		\node at (245,349) [vertex,fill=white] (v1) {};
		\node at (196,374) [vertex,fill=black] (v2) {};
		\node at (140,374) [vertex,fill=white] (v3) {};
		\node at (91,158) [vertex,fill=black] (v4) {};
		\node at (57,201) [vertex,fill=white] (v5) {};
		\node at (44,254) [vertex,fill=black] (v6) {};
		\node at (57,307) [vertex,fill=white] (v7) {};
		\node at (91,349) [vertex,fill=black] (v8) {};
		\node at (291,254) [vertex,fill=white] (v9) {};
		\node at (279,201) [vertex,fill=black] (v10) {};
		\node at (245,158) [vertex,fill=white] (v11) {};
		\node at (196,134) [vertex,fill=black] (v12) {};
		\node at (140,134) [vertex,fill=white] (v13) {};
		\node at (279,307) [vertex,fill=black] (v14) {};
		\node at (459,696) [vertex,fill=black] (v15) {};
		\node at (409,720) [vertex,fill=white] (v16) {};
		\node at (355,720) [vertex,fill=black] (v17) {};
		\node at (305,504) [vertex,fill=white] (v18) {};
		\node at (270,547) [vertex,fill=black] (v19) {};
		\node at (258,600) [vertex,fill=white] (v20) {};
		\node at (270,654) [vertex,fill=black] (v21) {};
		\node at (305,696) [vertex,fill=white] (v22) {};
		\node at (505,600) [vertex,fill=black] (v23) {};
		\node at (492,547) [vertex,fill=white] (v24) {};
		\node at (459,504) [vertex,fill=black] (v25) {};
		\node at (409,480) [vertex,fill=white] (v26) {};
		\node at (355,480) [vertex,fill=black] (v27) {};
		\node at (492,654) [vertex,fill=white] (v28) {};
		\node at (658,349) [vertex,fill=white] (v29) {};
		\node at (610,374) [vertex,fill=black] (v30) {};
		\node at (555,374) [vertex,fill=white] (v31) {};
		\node at (505,158) [vertex,fill=black] (v32) {};
		\node at (470,201) [vertex,fill=white] (v33) {};
		\node at (459,254) [vertex,fill=black] (v34) {};
		\node at (470,307) [vertex,fill=white] (v35) {};
		\node at (505,349) [vertex,fill=black] (v36) {};
		\node at (705,254) [vertex,fill=white] (v37) {};
		\node at (692,201) [vertex,fill=black] (v38) {};
		\node at (658,158) [vertex,fill=white] (v39) {};
		\node at (610,134) [vertex,fill=black] (v40) {};
		\node at (555,134) [vertex,fill=white] (v41) {};
		\node at (692,307) [vertex,fill=black] (v42) {};
		\node at (433,382) [vertex,fill=white] (v43) {};
		\node at (323,382) [vertex,fill=black] (v44) {};
		\path
		(v1) edge (v2)
		(v2) edge (v3)
		(v3) edge (v8)
		(v4) edge (v5)
		(v4) edge (v13)
		(v5) edge (v6)
		(v6) edge (v7)
		(v7) edge (v8)
		(v9) edge (v10)
		(v9) edge (v14)
		(v10) edge (v11)
		(v11) edge (v12)
		(v12) edge (v13)
		(v7) edge (v14)
		(v2) edge (v5)
		(v3) edge (v10)
		(v6) edge (v11)
		(v4) edge (v9)
		(v1) edge (v12)
		(v8) edge (v13)
		(v15) edge (v16)
		(v15) edge (v28)
		(v16) edge (v17)
		(v17) edge (v22)
		(v18) edge (v19)
		(v18) edge (v27)
		(v19) edge (v20)
		(v20) edge (v21)
		(v21) edge (v22)
		(v23) edge (v24)
		(v23) edge (v28)
		(v24) edge (v25)
		(v25) edge (v26)
		(v21) edge (v28)
		(v16) edge (v19)
		(v17) edge (v24)
		(v20) edge (v25)
		(v18) edge (v23)
		(v15) edge (v26)
		(v22) edge (v27)
		(v29) edge (v30)
		(v29) edge (v42)
		(v30) edge (v31)
		(v31) edge (v36)
		(v32) edge (v33)
		(v32) edge (v41)
		(v33) edge (v34)
		(v34) edge (v35)
		(v37) edge (v38)
		(v37) edge (v42)
		(v38) edge (v39)
		(v39) edge (v40)
		(v40) edge (v41)
		(v35) edge (v42)
		(v30) edge (v33)
		(v31) edge (v38)
		(v34) edge (v39)
		(v32) edge (v37)
		(v29) edge (v40)
		(v36) edge (v41)
		(v1) edge (v44)
		(v36) edge (v43)
		(v35) edge (v44)
		(v26) edge (v44)
		(v27) edge (v43)
		(v14) edge (v43)
		;
	\end{tikzpicture}
	\caption{Levi graphs of the unique configurations $21_3$ and $22_3$ with no blocking set}
	\label{fig:BS22}
\end{figure}
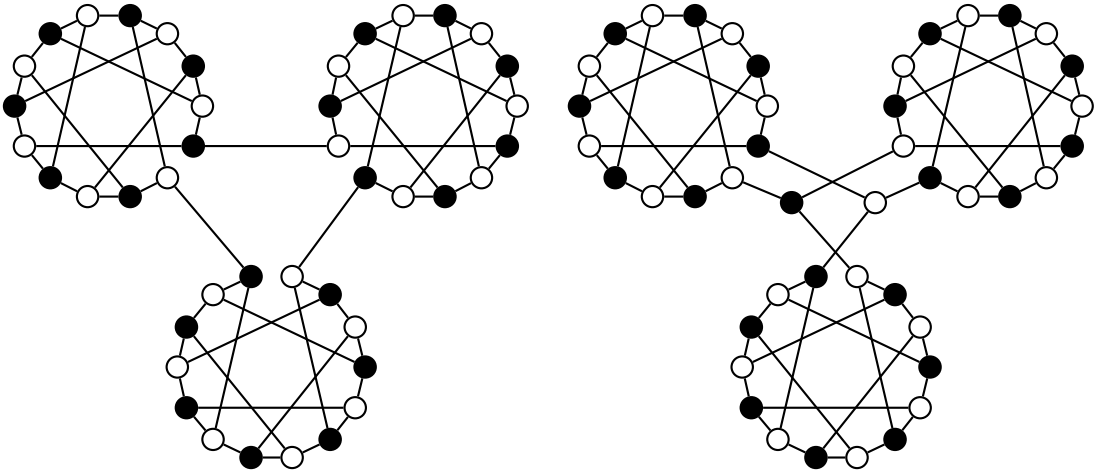

To deduce the existence of blocking set free configurations $v_3$ for all $v\geq 27$, first note that by putting $v'=7$ in Theorem~\ref{thm:stitch2}, it follows that if there exists a blocking set free configuration $v_3$ then there exists a blocking set free configuration $(v+6)_3$. Thus once there exist such configurations for six consecutive values of $v$, existence for all larger values of $v$ follows inductively. Existence for the value $v=27$ follows from Theorem~\ref{thm:stitch3} by putting $v^1=v^2=7$ and $v^3=13$, and for the value $v=28$ from Theorem~\ref{thm:stitch2} by putting $v=7$ and $v'=22$. For the value $v=31$, first construct a configuration $25_3$ from Theorem~\ref{thm:stitch2} by putting $v = v' = 13$ and then, again from Theorem~\ref{thm:stitch2}, by putting $v = 7$ and $v' = 25$.

As reported in~\cite{Gropp1997}, Kornerup~\cite{Kornerup} constructed blocking set free configurations $v_3$ for the values $v=29,30,32$. These are contained in a thesis of the University of Aarhus which we have been unable to see, and the configurations found do not seem to be published elsewhere. Thus in order to give a complete account in one place, we have also constructed configurations $29_3$, $30_3$ and $32_3$ without a blocking set. We show their Levi graphs in Figure~\ref{fig:BS2932}, and include the blocks below. 

A blocking set free configuration $29_3$:

\begin{config}
	\-\ 012 034 056 135 146 29d 2bc 367 457 7es 89b 8as 8cd 9ac abd egk eij fgi\\
	\-\ fhl fjk ghj hik lnr lpq mnp mos mqr noq opr
\end{config}

A blocking set free configuration $30_3$:

\begin{config}
	\-\ 012 034 056 135 146 2bt 2fm 367 457 789 8ae 8cd 9ac 9de abd bce fhl fjk\\
	\-\ ghj git gkl hik ijl mos mqr noq npt nrs opr pqs
\end{config}

A blocking set free configuration $32_3$:

\begin{config}
	\-\ 012 034 056 135 146 27k 2tv 36l 45l 78a 7bc 89b 8ck 9ac 9gt abk deu dfj\\
	\-\ dhi efh eij fgi ghj luv mnv mos mqr noq nrs opr pqs ptu
\end{config}

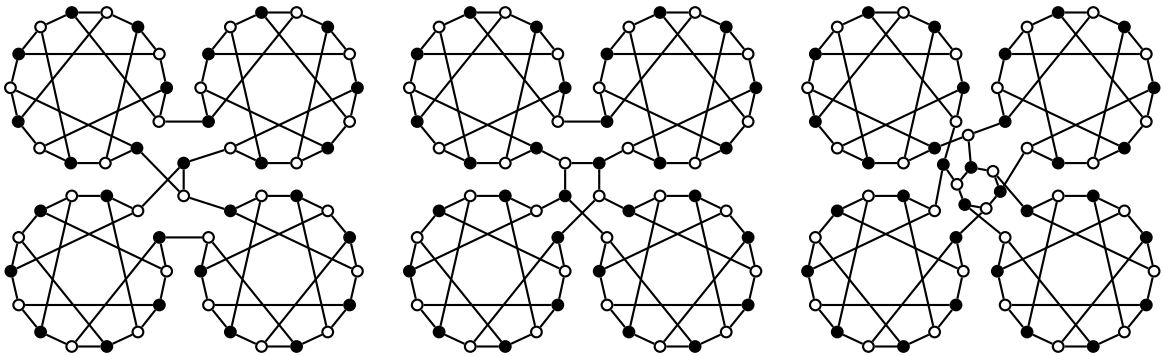
\begin{figure}\centering
	\begin{tikzpicture}[x=0.2mm,y=-0.2mm,inner sep=0.2mm,scale=0.32,thick,vertex/.style={circle,draw,minimum size=4,fill=white}]
		\node at (339,270) [vertex,fill=white] (v1) {$$};
		\node at (294,325) [vertex,fill=black] (v2) {$$};
		\node at (229,356) [vertex,fill=white] (v3) {$$};
		\node at (51,130) [vertex,fill=black] (v4) {$$};
		\node at (34,200) [vertex,fill=white] (v5) {$$};
		\node at (50,270) [vertex,fill=black] (v6) {$$};
		\node at (94,325) [vertex,fill=white] (v7) {$$};
		\node at (158,356) [vertex,fill=black] (v8) {$$};
		\node at (340,130) [vertex,fill=white] (v9) {$$};
		\node at (296,74) [vertex,fill=black] (v10) {$$};
		\node at (232,44) [vertex,fill=white] (v11) {$$};
		\node at (160,44) [vertex,fill=black] (v12) {$$};
		\node at (96,74) [vertex,fill=white] (v13) {$$};
		\node at (355,200) [vertex,fill=black] (v14) {$$};
		\node at (296,706) [vertex,fill=white] (v15) {$$};
		\node at (232,736) [vertex,fill=black] (v16) {$$};
		\node at (160,736) [vertex,fill=white] (v17) {$$};
		\node at (96,455) [vertex,fill=black] (v18) {$$};
		\node at (51,510) [vertex,fill=white] (v19) {$$};
		\node at (35,580) [vertex,fill=black] (v20) {$$};
		\node at (51,650) [vertex,fill=white] (v21) {$$};
		\node at (96,706) [vertex,fill=black] (v22) {$$};
		\node at (355,580) [vertex,fill=white] (v23) {$$};
		\node at (340,510) [vertex,fill=black] (v24) {$$};
		\node at (296,455) [vertex,fill=white] (v25) {$$};
		\node at (232,424) [vertex,fill=black] (v26) {$$};
		\node at (160,424) [vertex,fill=white] (v27) {$$};
		\node at (340,650) [vertex,fill=black] (v28) {$$};
		\node at (731,270) [vertex,fill=white] (v29) {$$};
		\node at (686,325) [vertex,fill=black] (v30) {$$};
		\node at (622,356) [vertex,fill=white] (v31) {$$};
		\node at (441,130) [vertex,fill=black] (v32) {$$};
		\node at (425,200) [vertex,fill=white] (v33) {$$};
		\node at (441,270) [vertex,fill=black] (v34) {$$};
		\node at (486,325) [vertex,fill=white] (v35) {$$};
		\node at (550,356) [vertex,fill=black] (v36) {$$};
		\node at (731,130) [vertex,fill=white] (v37) {$$};
		\node at (686,74) [vertex,fill=black] (v38) {$$};
		\node at (622,44) [vertex,fill=white] (v39) {$$};
		\node at (550,44) [vertex,fill=black] (v40) {$$};
		\node at (486,74) [vertex,fill=white] (v41) {$$};
		\node at (747,200) [vertex,fill=black] (v42) {$$};
		\node at (747,580) [vertex,fill=white] (v43) {$$};
		\node at (731,650) [vertex,fill=black] (v44) {$$};
		\node at (686,706) [vertex,fill=white] (v45) {$$};
		\node at (425,580) [vertex,fill=black] (v46) {$$};
		\node at (441,650) [vertex,fill=white] (v47) {$$};
		\node at (486,706) [vertex,fill=black] (v48) {$$};
		\node at (550,736) [vertex,fill=white] (v49) {$$};
		\node at (622,736) [vertex,fill=black] (v50) {$$};
		\node at (686,455) [vertex,fill=white] (v51) {$$};
		\node at (622,424) [vertex,fill=black] (v52) {$$};
		\node at (550,424) [vertex,fill=white] (v53) {$$};
		\node at (486,455) [vertex,fill=black] (v54) {$$};
		\node at (441,510) [vertex,fill=white] (v55) {$$};
		\node at (731,510) [vertex,fill=black] (v56) {$$};
		\node at (390,356) [vertex,fill=black] (v57) {$$};
		\node at (390,424) [vertex,fill=white] (v58) {$$};
		\path
		(v2) edge (v3)
		(v3) edge (v8)
		(v4) edge (v5)
		(v4) edge (v13)
		(v5) edge (v6)
		(v6) edge (v7)
		(v7) edge (v8)
		(v9) edge (v10)
		(v9) edge (v14)
		(v10) edge (v11)
		(v11) edge (v12)
		(v12) edge (v13)
		(v7) edge (v14)
		(v2) edge (v5)
		(v3) edge (v10)
		(v6) edge (v11)
		(v4) edge (v9)
		(v8) edge (v13)
		(v15) edge (v16)
		(v15) edge (v28)
		(v16) edge (v17)
		(v17) edge (v22)
		(v18) edge (v19)
		(v18) edge (v27)
		(v19) edge (v20)
		(v20) edge (v21)
		(v21) edge (v22)
		(v23) edge (v24)
		(v23) edge (v28)
		(v26) edge (v27)
		(v21) edge (v28)
		(v16) edge (v19)
		(v17) edge (v24)
		(v18) edge (v23)
		(v15) edge (v26)
		(v22) edge (v27)
		(v29) edge (v30)
		(v29) edge (v42)
		(v30) edge (v31)
		(v31) edge (v36)
		(v32) edge (v33)
		(v32) edge (v41)
		(v33) edge (v34)
		(v37) edge (v38)
		(v37) edge (v42)
		(v38) edge (v39)
		(v39) edge (v40)
		(v40) edge (v41)
		(v30) edge (v33)
		(v31) edge (v38)
		(v34) edge (v39)
		(v32) edge (v37)
		(v29) edge (v40)
		(v36) edge (v41)
		(v43) edge (v44)
		(v43) edge (v56)
		(v44) edge (v45)
		(v45) edge (v50)
		(v46) edge (v47)
		(v47) edge (v48)
		(v48) edge (v49)
		(v49) edge (v50)
		(v51) edge (v52)
		(v51) edge (v56)
		(v52) edge (v53)
		(v53) edge (v54)
		(v49) edge (v56)
		(v44) edge (v47)
		(v45) edge (v52)
		(v48) edge (v53)
		(v46) edge (v51)
		(v43) edge (v54)
		(v1) edge (v12)
		(v1) edge (v34)
		(v35) edge (v36)
		(v35) edge (v42)
		(v35) edge (v57)
		(v50) edge (v55)
		(v24) edge (v55)
		(v25) edge (v26)
		(v25) edge (v57)
		(v57) edge (v58)
		(v20) edge (v25)
		(v46) edge (v55)
		(v54) edge (v58)
		(v1) edge (v14)
		(v2) edge (v58)
		;
	\end{tikzpicture}
	\quad
	\begin{tikzpicture}[x=0.2mm,y=-0.2mm,inner sep=0.2mm,scale=0.32,thick,vertex/.style={circle,draw,minimum size=4,fill=white}]
		\node at (340,270) [vertex,fill=white] (v1) {$$};
		\node at (296,325) [vertex,fill=black] (v2) {$$};
		\node at (232,356) [vertex,fill=white] (v3) {$$};
		\node at (51,130) [vertex,fill=black] (v4) {$$};
		\node at (35,200) [vertex,fill=white] (v5) {$$};
		\node at (51,270) [vertex,fill=black] (v6) {$$};
		\node at (96,325) [vertex,fill=white] (v7) {$$};
		\node at (160,356) [vertex,fill=black] (v8) {$$};
		\node at (340,130) [vertex,fill=white] (v9) {$$};
		\node at (296,74) [vertex,fill=black] (v10) {$$};
		\node at (232,44) [vertex,fill=white] (v11) {$$};
		\node at (160,44) [vertex,fill=black] (v12) {$$};
		\node at (96,74) [vertex,fill=white] (v13) {$$};
		\node at (355,200) [vertex,fill=black] (v14) {$$};
		\node at (296,706) [vertex,fill=white] (v15) {$$};
		\node at (232,736) [vertex,fill=black] (v16) {$$};
		\node at (160,736) [vertex,fill=white] (v17) {$$};
		\node at (96,455) [vertex,fill=black] (v18) {$$};
		\node at (51,510) [vertex,fill=white] (v19) {$$};
		\node at (35,580) [vertex,fill=black] (v20) {$$};
		\node at (51,650) [vertex,fill=white] (v21) {$$};
		\node at (96,706) [vertex,fill=black] (v22) {$$};
		\node at (355,580) [vertex,fill=white] (v23) {$$};
		\node at (340,510) [vertex,fill=black] (v24) {$$};
		\node at (296,455) [vertex,fill=white] (v25) {$$};
		\node at (232,424) [vertex,fill=black] (v26) {$$};
		\node at (160,424) [vertex,fill=white] (v27) {$$};
		\node at (340,650) [vertex,fill=black] (v28) {$$};
		\node at (731,270) [vertex,fill=white] (v29) {$$};
		\node at (686,325) [vertex,fill=black] (v30) {$$};
		\node at (622,356) [vertex,fill=white] (v31) {$$};
		\node at (441,130) [vertex,fill=black] (v32) {$$};
		\node at (425,200) [vertex,fill=white] (v33) {$$};
		\node at (441,270) [vertex,fill=black] (v34) {$$};
		\node at (484,325) [vertex,fill=white] (v35) {$$};
		\node at (550,356) [vertex,fill=black] (v36) {$$};
		\node at (731,130) [vertex,fill=white] (v37) {$$};
		\node at (686,74) [vertex,fill=black] (v38) {$$};
		\node at (622,44) [vertex,fill=white] (v39) {$$};
		\node at (550,44) [vertex,fill=black] (v40) {$$};
		\node at (486,74) [vertex,fill=white] (v41) {$$};
		\node at (747,200) [vertex,fill=black] (v42) {$$};
		\node at (747,580) [vertex,fill=white] (v43) {$$};
		\node at (731,650) [vertex,fill=black] (v44) {$$};
		\node at (686,706) [vertex,fill=white] (v45) {$$};
		\node at (425,580) [vertex,fill=black] (v46) {$$};
		\node at (441,650) [vertex,fill=white] (v47) {$$};
		\node at (486,706) [vertex,fill=black] (v48) {$$};
		\node at (550,736) [vertex,fill=white] (v49) {$$};
		\node at (622,736) [vertex,fill=black] (v50) {$$};
		\node at (686,455) [vertex,fill=white] (v51) {$$};
		\node at (622,424) [vertex,fill=black] (v52) {$$};
		\node at (550,424) [vertex,fill=white] (v53) {$$};
		\node at (486,455) [vertex,fill=black] (v54) {$$};
		\node at (441,510) [vertex,fill=white] (v55) {$$};
		\node at (731,510) [vertex,fill=black] (v56) {$$};
		\node at (425,356) [vertex,fill=black] (v57) {$$};
		\node at (355,356) [vertex,fill=white] (v58) {$$};
		\node at (425,424) [vertex,fill=white] (v59) {$$};
		\node at (355,424) [vertex,fill=black] (v60) {$$};
		\path
		(v2) edge (v3)
		(v3) edge (v8)
		(v4) edge (v5)
		(v4) edge (v13)
		(v5) edge (v6)
		(v6) edge (v7)
		(v7) edge (v8)
		(v9) edge (v10)
		(v9) edge (v14)
		(v10) edge (v11)
		(v11) edge (v12)
		(v12) edge (v13)
		(v7) edge (v14)
		(v2) edge (v5)
		(v3) edge (v10)
		(v6) edge (v11)
		(v4) edge (v9)
		(v8) edge (v13)
		(v15) edge (v16)
		(v15) edge (v28)
		(v16) edge (v17)
		(v17) edge (v22)
		(v18) edge (v19)
		(v18) edge (v27)
		(v19) edge (v20)
		(v20) edge (v21)
		(v21) edge (v22)
		(v23) edge (v24)
		(v23) edge (v28)
		(v26) edge (v27)
		(v21) edge (v28)
		(v16) edge (v19)
		(v17) edge (v24)
		(v18) edge (v23)
		(v15) edge (v26)
		(v22) edge (v27)
		(v29) edge (v30)
		(v29) edge (v42)
		(v30) edge (v31)
		(v31) edge (v36)
		(v32) edge (v33)
		(v32) edge (v41)
		(v33) edge (v34)
		(v37) edge (v38)
		(v37) edge (v42)
		(v38) edge (v39)
		(v39) edge (v40)
		(v40) edge (v41)
		(v30) edge (v33)
		(v31) edge (v38)
		(v34) edge (v39)
		(v32) edge (v37)
		(v29) edge (v40)
		(v36) edge (v41)
		(v43) edge (v44)
		(v43) edge (v56)
		(v44) edge (v45)
		(v45) edge (v50)
		(v46) edge (v47)
		(v47) edge (v48)
		(v48) edge (v49)
		(v49) edge (v50)
		(v51) edge (v52)
		(v51) edge (v56)
		(v52) edge (v53)
		(v53) edge (v54)
		(v49) edge (v56)
		(v44) edge (v47)
		(v45) edge (v52)
		(v48) edge (v53)
		(v46) edge (v51)
		(v43) edge (v54)
		(v1) edge (v12)
		(v35) edge (v36)
		(v35) edge (v42)
		(v50) edge (v55)
		(v25) edge (v26)
		(v20) edge (v25)
		(v46) edge (v55)
		(v1) edge (v14)
		(v1) edge (v34)
		(v57) edge (v58)
		(v58) edge (v60)
		(v2) edge (v58)
		(v35) edge (v57)
		(v55) edge (v60)
		(v25) edge (v60)
		(v54) edge (v59)
		(v57) edge (v59)
		(v24) edge (v59)
		;
	\end{tikzpicture}
	\quad
	\begin{tikzpicture}[x=0.2mm,y=-0.2mm,inner sep=0.2mm,scale=0.32,thick,vertex/.style={circle,draw,minimum size=4,fill=white}]
		\node at (340,270) [vertex,fill=white] (v1) {$$};
		\node at (296,325) [vertex,fill=black] (v2) {$$};
		\node at (232,356) [vertex,fill=white] (v3) {$$};
		\node at (51,130) [vertex,fill=black] (v4) {$$};
		\node at (35,200) [vertex,fill=white] (v5) {$$};
		\node at (51,270) [vertex,fill=black] (v6) {$$};
		\node at (96,325) [vertex,fill=white] (v7) {$$};
		\node at (160,356) [vertex,fill=black] (v8) {$$};
		\node at (340,130) [vertex,fill=white] (v9) {$$};
		\node at (296,74) [vertex,fill=black] (v10) {$$};
		\node at (232,44) [vertex,fill=white] (v11) {$$};
		\node at (160,44) [vertex,fill=black] (v12) {$$};
		\node at (96,74) [vertex,fill=white] (v13) {$$};
		\node at (355,200) [vertex,fill=black] (v14) {$$};
		\node at (686,455) [vertex,fill=white] (v15) {$$};
		\node at (731,510) [vertex,fill=black] (v16) {$$};
		\node at (747,580) [vertex,fill=white] (v17) {$$};
		\node at (486,706) [vertex,fill=black] (v18) {$$};
		\node at (550,736) [vertex,fill=white] (v19) {$$};
		\node at (622,736) [vertex,fill=black] (v20) {$$};
		\node at (686,706) [vertex,fill=white] (v21) {$$};
		\node at (731,650) [vertex,fill=black] (v22) {$$};
		\node at (550,424) [vertex,fill=white] (v23) {$$};
		\node at (486,455) [vertex,fill=black] (v24) {$$};
		\node at (441,510) [vertex,fill=white] (v25) {$$};
		\node at (425,580) [vertex,fill=black] (v26) {$$};
		\node at (441,650) [vertex,fill=white] (v27) {$$};
		\node at (622,424) [vertex,fill=black] (v28) {$$};
		\node at (731,270) [vertex,fill=white] (v29) {$$};
		\node at (686,325) [vertex,fill=black] (v30) {$$};
		\node at (622,356) [vertex,fill=white] (v31) {$$};
		\node at (441,130) [vertex,fill=black] (v32) {$$};
		\node at (425,200) [vertex,fill=white] (v33) {$$};
		\node at (441,270) [vertex,fill=black] (v34) {$$};
		\node at (486,325) [vertex,fill=white] (v35) {$$};
		\node at (550,356) [vertex,fill=black] (v36) {$$};
		\node at (731,130) [vertex,fill=white] (v37) {$$};
		\node at (686,74) [vertex,fill=black] (v38) {$$};
		\node at (622,44) [vertex,fill=white] (v39) {$$};
		\node at (550,44) [vertex,fill=black] (v40) {$$};
		\node at (486,74) [vertex,fill=white] (v41) {$$};
		\node at (747,200) [vertex,fill=black] (v42) {$$};
		\node at (160,736) [vertex,fill=white] (v43) {$$};
		\node at (96,706) [vertex,fill=black] (v44) {$$};
		\node at (51,650) [vertex,fill=white] (v45) {$$};
		\node at (232,424) [vertex,fill=black] (v46) {$$};
		\node at (160,424) [vertex,fill=white] (v47) {$$};
		\node at (96,455) [vertex,fill=black] (v48) {$$};
		\node at (51,510) [vertex,fill=white] (v49) {$$};
		\node at (35,580) [vertex,fill=black] (v50) {$$};
		\node at (296,706) [vertex,fill=white] (v51) {$$};
		\node at (340,650) [vertex,fill=black] (v52) {$$};
		\node at (355,580) [vertex,fill=white] (v53) {$$};
		\node at (340,510) [vertex,fill=black] (v54) {$$};
		\node at (296,455) [vertex,fill=white] (v55) {$$};
		\node at (232,736) [vertex,fill=black] (v56) {$$};
		\node at (431,415) [vertex,fill=black] (v57) {$$};
		\node at (342,400) [vertex,fill=white] (v58) {$$};
		\node at (402,450) [vertex,fill=white] (v59) {$$};
		\node at (358,442) [vertex,fill=black] (v60) {$$};
		\node at (416,373) [vertex,fill=white] (v61) {$$};
		\node at (371,365) [vertex,fill=black] (v62) {$$};
		\node at (314,359) [vertex,fill=black] (v63) {$$};
		\node at (365,298) [vertex,fill=white] (v64) {$$};
		\path
		(v2) edge (v3)
		(v3) edge (v8)
		(v4) edge (v5)
		(v4) edge (v13)
		(v5) edge (v6)
		(v6) edge (v7)
		(v7) edge (v8)
		(v9) edge (v10)
		(v9) edge (v14)
		(v10) edge (v11)
		(v11) edge (v12)
		(v12) edge (v13)
		(v7) edge (v14)
		(v2) edge (v5)
		(v3) edge (v10)
		(v6) edge (v11)
		(v4) edge (v9)
		(v8) edge (v13)
		(v15) edge (v16)
		(v15) edge (v28)
		(v16) edge (v17)
		(v17) edge (v22)
		(v18) edge (v19)
		(v18) edge (v27)
		(v19) edge (v20)
		(v20) edge (v21)
		(v21) edge (v22)
		(v23) edge (v24)
		(v23) edge (v28)
		(v26) edge (v27)
		(v21) edge (v28)
		(v16) edge (v19)
		(v17) edge (v24)
		(v18) edge (v23)
		(v15) edge (v26)
		(v22) edge (v27)
		(v29) edge (v30)
		(v29) edge (v42)
		(v30) edge (v31)
		(v31) edge (v36)
		(v32) edge (v33)
		(v32) edge (v41)
		(v33) edge (v34)
		(v37) edge (v38)
		(v37) edge (v42)
		(v38) edge (v39)
		(v39) edge (v40)
		(v40) edge (v41)
		(v30) edge (v33)
		(v31) edge (v38)
		(v34) edge (v39)
		(v32) edge (v37)
		(v29) edge (v40)
		(v36) edge (v41)
		(v43) edge (v44)
		(v43) edge (v56)
		(v44) edge (v45)
		(v45) edge (v50)
		(v46) edge (v47)
		(v47) edge (v48)
		(v48) edge (v49)
		(v49) edge (v50)
		(v51) edge (v52)
		(v51) edge (v56)
		(v52) edge (v53)
		(v53) edge (v54)
		(v49) edge (v56)
		(v44) edge (v47)
		(v45) edge (v52)
		(v48) edge (v53)
		(v46) edge (v51)
		(v43) edge (v54)
		(v1) edge (v12)
		(v35) edge (v36)
		(v35) edge (v42)
		(v50) edge (v55)
		(v25) edge (v26)
		(v20) edge (v25)
		(v46) edge (v55)
		(v1) edge (v14)
		(v1) edge (v63)
		(v35) edge (v57)
		(v55) edge (v63)
		(v25) edge (v60)
		(v62) edge (v64)
		(v34) edge (v64)
		(v2) edge (v64)
		(v54) edge (v59)
		(v57) edge (v59)
		(v59) edge (v60)
		(v61) edge (v62)
		(v57) edge (v61)
		(v24) edge (v61)
		(v58) edge (v60)
		(v58) edge (v63)
		(v58) edge (v62)
		;
	\end{tikzpicture}
	\caption{Levi graphs of configurations $29_3$, $30_3$ and $32_3$ with no blocking set}
	\label{fig:BS2932}
\end{figure}

Again we have used the ``merging'' technique and claim no originality for these. They may very well be the same systems discovered by Kornerup. 

\subsection{Connectivity of configurations}\label{sec:conn}
Here we introduce the idea of the connectivity of a symmetric configuration and derive some results. First recall that in a cubic graph, the vertex connectivity is equal to the edge connectivity. Further if a connected cubic graph is also bipartite, then the connectivity cannot be 1 and so is equal to either 2 or 3. Define the \emph{connectivity} of a symmetric configuration to be the connectivity of its Levi graph. In~\cite{funk2003det}, Funk et alia present the following operation. Let $G_1$ and $G_2$ be cubic bipartite graphs which are disjoint, and let $y\in V(G_1)$ with neighbour set $\{x_1,x_2,x_3\}$ and $x\in V(G_2)$ with neighbour set $\{y_1,y_2,y_3\}$. Then the graph 

$\quad G=(G_1\setminus y)\cup(G_2\setminus x)\cup\{x_1 y_1,x_2 y_2,x_3 y_3\}$ 

is said to be a \emph{vertex-sum} of $G_1$ and $G_2$. They then quote the following theorem which they attribute to McCuaig~\cite{mccuaig2000even}.

\begin{theorem}[McCuaig]\label{thm:mccuaig}
A 3-connected cubic bipartite graph is det-extremal if and only if it can be obtained from the Heawood graph by repeatedly applying the vertex-sum operation.
\end{theorem}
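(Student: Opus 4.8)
The plan is to treat the two implications separately; I expect essentially all the difficulty to lie in the forward direction.

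For the reverse direction — that any graph built from the Heawood graph by repeated vertex-sums is det-extremal — I would induct on the number of vertex-sums. The base case is a direct check that the Heawood graph is det-extremal: its biadjacency matrix is a relabelling of the $7\times 7$ incidence matrix of the Fano plane, and $|\det| = 24 = \per$, consistent with the Fano plane having no blocking set. For the inductive step, let $G$ be the vertex-sum of $G_1$ at $y$ (with $N(y)=\{x_1,x_2,x_3\}$) and $G_2$ at $x$ (with $N(x)=\{y_1,y_2,y_3\}$). Ordering the vertices so that $X_1$ precedes $X_2\setminus\{x\}$ among the rows and $Y_1\setminus\{y\}$ precedes $Y_2$ among the columns, the biadjacency matrix of $G$ has the block shape $\bigl(\begin{smallmatrix}A_1' & B\\ 0 & A_2'\end{smallmatrix}\bigr)$, where $A_1'$ is $A_1$ with the column $y$ deleted, $A_2'$ is $A_2$ with the row $x$ deleted, and $B$ is supported only on the three entries $(x_i,y_i)$. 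Since the lower-left block vanishes and $B$ is so sparse, a generalized Laplace expansion yields
$$\det A = \sum_{i=1}^{3}\varepsilon_i\, M^{(1)}_i M^{(2)}_i, \qquad \per A = \sum_{i=1}^{3} P^{(1)}_i P^{(2)}_i,$$
where $M^{(1)}_i, P^{(1)}_i$ (resp.\ $M^{(2)}_i, P^{(2)}_i$) are the minor and permanental minor of $A_1$ (resp.\ $A_2$) gotten by deleting row $x_i$ and column $y$ (resp.\ row $x$ and column $y_i$), and $\varepsilon_i\in\{\pm1\}$. Expanding $\det A_1$ and $\per A_1$ along the column $y$, which meets only $x_1,x_2,x_3$, gives $|\det A_1|\le \sum_i |M^{(1)}_i|\le \sum_i P^{(1)}_i = \per A_1$; so det-extremality of $G_1$ forces $|M^{(1)}_i| = P^{(1)}_i$ for every $i$ with the signed minors $\pm M^{(1)}_i$ all of one sign, and likewise for $G_2$. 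It then remains only to carry out the sign bookkeeping: one checks that $\varepsilon_i\cdot\operatorname{sign}(M^{(1)}_i)\cdot\operatorname{sign}(M^{(2)}_i)$ is independent of $i$ — the dependence of $\varepsilon_i$ on the positions of $x_i$ and $y_i$ should cancel that coming from the cofactor signs — after which the three terms of $\det A$ add without cancellation and $|\det A| = \per A$.

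For the forward direction I would induct on $|V(G)|$. A det-extremal graph is by definition one whose Levi graph is Pfaffian for the all-positive orientation, so $G$ is in particular a Pfaffian bipartite graph. A vertex-sum is precisely the operation that creates a ``new'' $3$-edge-cut, so its inverse is: given a non-trivial $3$-edge-cut of $G$ (one in which both sides have at least two vertices), split $G$ along it and cap each side with a single new vertex, producing cubic bipartite graphs $G_1, G_2$ with $G = G_1\oplus G_2$ and $|V(G_1)|, |V(G_2)| < |V(G)|$; one checks $G_1, G_2$ stay $3$-connected and, reading the determinant identity above backwards, stay det-extremal, so induction applies. Hence it suffices to handle graphs with no non-trivial $3$-edge-cut, i.e.\ cyclically $4$-edge-connected ones, for which the claim is that the only det-extremal example is the Heawood graph. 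This is where I expect the genuine difficulty — and the reliance on heavy machinery — to sit: a cyclically $4$-edge-connected cubic bipartite graph is a brace, so one brings in the classification of Pfaffian braces (McCuaig's brace-generation theorem, or equivalently the work of Robertson, Seymour and Thomas) that the Pfaffian braces are exactly the planar braces together with the Heawood graph, and then one rules out every planar cubic bipartite brace by showing its all-positive pattern is not sign-nonsingular — the cube $Q_3$, with $|\det| = 2 \ne 9 = \per$, being the prototype. Isolating the Heawood graph this way is far from elementary, which is presumably why the result is quoted here rather than proved; the reverse direction is routine once the sign computation is pushed through.
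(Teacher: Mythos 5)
First, a point of context: the paper does not prove this statement at all — it is quoted verbatim from McCuaig (via Funk, Jackson, Labbate and Sheehan) precisely because its proof is deep, so there is no internal argument to compare yours against; your proposal has to stand on its own. Your reverse direction is essentially sound: the block expansion $\det A=\sum_i\varepsilon_i M_i^{(1)}M_i^{(2)}$, $\per A=\sum_i P_i^{(1)}P_i^{(2)}$ is correct (every perfect matching crosses the $3$-cut exactly once), the sign bookkeeping does close up ($\varepsilon_i\sigma_i\tau_i$ is indeed constant in $i$), and it is worth noting, as you did not, that the three cut edges automatically attach to the same colour class on each side, so bipartiteness of the capped pieces is not an extra hypothesis. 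The first genuine gap is in ``reading the determinant identity backwards'' to pass det-extremality from $G$ to $G_1,G_2$, which your induction needs: equality $|\det A|=\per A$ only forces $|M_i^{(1)}|\,|M_i^{(2)}|=P_i^{(1)}P_i^{(2)}$ and a common sign for the three \emph{products}; it does not force the signed minors of $G_1$ (or of $G_2$) to be separately sign-consistent — sign patterns $(+,+,-)$ and $(+,+,-)$ multiply to $(+,+,+)$. Excluding that scenario needs a further graph-theoretic argument (e.g.\ splicing a nice cycle of length $\equiv 0\pmod 4$ of $G_1$ through $y$ into a nice cycle of $G$ via a path of controlled parity in $G_2$), and this is exactly the nontrivial content of the corresponding lemma in Funk et alia.

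The second, larger gap is that the heart of the forward direction is not proved but delegated, and the delegation is misquoted. The McCuaig/Robertson--Seymour--Thomas classification does not say that the Pfaffian braces are ``the planar braces together with the Heawood graph''; it says a Pfaffian brace is either the Heawood graph or is obtainable from planar braces by repeated trisums, so cubic braces arising from trisums must also be dealt with. Moreover the elimination of planar cyclically $4$-edge-connected cubic bipartite braces is only asserted, with $Q_3$ as a ``prototype''; no general argument is offered, and the most tempting shortcut — combine Lemma~\ref{lem:ham} with Hamiltonicity of $3$-connected planar cubic bipartite graphs — is unavailable, since that Hamiltonicity statement is Barnette's conjecture and is open. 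You also use without justification that a cyclically $4$-edge-connected cubic bipartite graph is a brace (true, but it needs a citation or proof). So your outline is a reasonable reconstruction of how such a proof is organised, but as written the decisive steps of the forward direction are either outsourced to machinery at least as heavy as the theorem itself, stated inaccurately, or left as assertions; since the paper simply cites McCuaig, citing is legitimate, but the sketch should not present these steps as routine.
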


For our purposes, the significance of the vertex-sum operation on cubic bipartite graphs is that it is equivalent to the $v+v'-1$ construction of Bollob\'as and Harris given in Theorem~\ref{thm:stitch2}. Thus we have the following result.

\begin{theorem}\label{thm:3conn}
A 3-connected symmetric configuration $v_3$ without a blocking set exists if and only if $v\equiv 1\!\pmod{6}$. Moreover, such systems can only be obtained from the Fano plane by repeatedly applying the $v+v'-1$ construction.
\end{theorem}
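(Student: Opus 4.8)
The plan is to deduce everything from McCuaig's theorem (Theorem~\ref{thm:mccuaig}) by translating between configurations and their Levi graphs. By Thomassen's observation, a symmetric configuration $v_3$ has no blocking set if and only if its Levi graph is a det-extremal connected cubic bipartite graph of order $2v$; the connectivity of the configuration is by definition that of its Levi graph; the vertex-sum operation on cubic bipartite graphs is, as noted just before Theorem~\ref{thm:mccuaig}, exactly the $v+v'-1$ construction of Theorem~\ref{thm:stitch2} on the configuration side; and the Levi graph of the Fano plane $7_3$ is the Heawood graph, which is $3$-connected. With this dictionary in place, Theorem~\ref{thm:mccuaig} literally asserts that the $3$-connected symmetric configurations $v_3$ without a blocking set are precisely those obtainable from the Fano plane by iterating the $v+v'-1$ construction, which is the ``moreover'' clause; it then remains only to pin down the spectrum of admissible $v$.

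For the ``only if'' direction I would observe that any cubic bipartite graph built from copies of the Heawood graph by repeated vertex-sums has order $2v$ with $v\equiv 1\pmod 6$. This is a one-line induction: the Heawood graph itself has $v=7\equiv 1$, and a vertex-sum of graphs with parameters $v_1$ and $v_2$ has parameter $v_1+v_2-1$, so $v_1\equiv v_2\equiv 1\pmod 6$ forces $v_1+v_2-1\equiv 1\pmod 6$. Combined with Theorem~\ref{thm:mccuaig}, this shows that a $3$-connected blocking-set-free $v_3$ can only occur when $v\equiv 1\pmod 6$.

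For the converse I would, for each $v=7+6k$ with $k\ge 0$, start from the Fano plane and apply the $v+v'-1$ construction $k$ times, taking $(v')_3$ to be a further copy of the Fano plane each time. Theorem~\ref{thm:stitch2} guarantees that blocking-set-freeness is preserved, so the only point needing verification is that $3$-connectivity is maintained, and this is the step I expect to be the main (essentially the only) real work. The required lemma is that \emph{a vertex-sum of two $3$-connected cubic bipartite graphs is again $3$-connected}. To see this, suppose $G=(G_1\setminus y)\cup(G_2\setminus x)\cup\{x_iy_i:i=1,2,3\}$ had a $2$-cut $\{a,b\}$, where $\{x_1,x_2,x_3\}=N_{G_1}(y)$ and $\{y_1,y_2,y_3\}=N_{G_2}(x)$. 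If $a,b$ both lie on the $G_1$-side, then $G_2\setminus x$ stays connected, and every component of $G_1\setminus\{y,a,b\}$ is incident to a surviving $x_i$ (in the connected graph $G_1\setminus\{a,b\}$ at least one of the $\ge 3-2$ surviving neighbours of $y$ falls into each component obtained after deleting $y$), so the matching reconnects $G$; if instead $a$ is on the $G_1$-side and $b$ on the $G_2$-side, then $G_1\setminus\{y,a\}$ and $G_2\setminus\{x,b\}$ each stay connected and each retains at least two of its three boundary vertices, so some index $i$ has both $x_i$ and $y_i$ present and the edge $x_iy_i$ reconnects the two sides. Since the Heawood graph is $3$-connected, induction on $k$ then yields a $3$-connected blocking-set-free $(7+6k)_3$ for every $k\ge 0$, completing the proof. (The routine checks that the vertex-sum is cubic and bipartite with both parts of size $v+v'-1$ also explain why the parameter behaves additively as $v_1+v_2-1$.)
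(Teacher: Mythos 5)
Your proposal is correct and follows essentially the same route as the paper, which states Theorem~\ref{thm:3conn} as a direct consequence of McCuaig's theorem via exactly your dictionary (Thomassen's det-extremal correspondence, connectivity of the Levi graph, and the identification of the vertex-sum with the $v+v'-1$ construction). The paper leaves the details implicit, so your two additions --- the mod~$6$ induction and the lemma that a vertex-sum of $3$-connected cubic bipartite graphs is again $3$-connected (needed for the existence direction) --- are a welcome and correct filling-in of what the paper glosses over.
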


This naturally raises the question of the spectrum of 2-connected symmetric configurations without a blocking set. From our account above it is clear that the systems $v_3$ with $v\equiv 1\!\pmod{6}$ arising from Theorem~\ref{thm:3conn} are 3-connected. There are no 2-connected systems for $v\in\{7,13,19\}$ since all have been enumerated and arise from Theorem~\ref{thm:3conn}; see Table~\ref{tab:enum} and the discussion in Section~\ref{sec:enum}. So to complete the spectrum, what is needed is a 2-connected configuration $25_3$ without a blocking set. Such a configuration does exist and its Levi graph is shown in Figure~\ref{fig:BS25}. The blocks are listed below.

\begin{config}
	\-\ 012 034 056 135 146 236 24m 5ln 78d 79c 7an 89b 8ac 9ad bcd blo ego ehi \\
	\-\ ejk fhk fij flm ghj gik mno
\end{config}

\begin{figure}\centering
	\begin{tikzpicture}[x=0.2mm,y=-0.2mm,inner sep=0.2mm,scale=0.7,thick,vertex/.style={circle,draw,minimum size=8,fill=white}]
		\node at (153,452) [vertex,fill=white] (v1) {};
		\node at (120,468) [vertex,fill=black] (v2) {};
		\node at (83,468) [vertex,fill=white] (v3) {};
		\node at (49,323) [vertex,fill=black] (v4) {};
		\node at (26,352) [vertex,fill=white] (v5) {};
		\node at (18,388) [vertex,fill=black] (v6) {};
		\node at (27,424) [vertex,fill=white] (v7) {};
		\node at (50,452) [vertex,fill=black] (v8) {};
		\node at (184,387) [vertex,fill=white] (v9) {};
		\node at (176,351) [vertex,fill=black] (v10) {};
		\node at (153,322) [vertex,fill=white] (v11) {};
		\node at (120,306) [vertex,fill=black] (v12) {};
		\node at (82,307) [vertex,fill=white] (v13) {};
		\node at (176,423) [vertex,fill=black] (v14) {};
		\node at (550,452) [vertex,fill=white] (v15) {};
		\node at (517,469) [vertex,fill=black] (v16) {};
		\node at (480,469) [vertex,fill=white] (v17) {};
		\node at (446,323) [vertex,fill=black] (v18) {};
		\node at (423,352) [vertex,fill=white] (v19) {};
		\node at (415,388) [vertex,fill=black] (v20) {};
		\node at (424,424) [vertex,fill=white] (v21) {};
		\node at (447,453) [vertex,fill=black] (v22) {};
		\node at (582,387) [vertex,fill=white] (v23) {};
		\node at (573,351) [vertex,fill=black] (v24) {};
		\node at (550,323) [vertex,fill=white] (v25) {};
		\node at (517,307) [vertex,fill=black] (v26) {};
		\node at (479,307) [vertex,fill=white] (v27) {};
		\node at (573,424) [vertex,fill=black] (v28) {};
		\node at (354,186) [vertex,fill=white] (v29) {};
		\node at (321,202) [vertex,fill=black] (v30) {};
		\node at (283,202) [vertex,fill=white] (v31) {};
		\node at (249,56) [vertex,fill=black] (v32) {};
		\node at (226,85) [vertex,fill=white] (v33) {};
		\node at (218,121) [vertex,fill=black] (v34) {};
		\node at (227,158) [vertex,fill=white] (v35) {};
		\node at (250,186) [vertex,fill=black] (v36) {};
		\node at (385,121) [vertex,fill=white] (v37) {};
		\node at (376,84) [vertex,fill=black] (v38) {};
		\node at (353,56) [vertex,fill=white] (v39) {};
		\node at (320,40) [vertex,fill=black] (v40) {};
		\node at (282,40) [vertex,fill=white] (v41) {};
		\node at (377,157) [vertex,fill=black] (v42) {};
		\node at (393,327) [vertex,fill=black] (v43) {};
		\node at (189,298) [vertex,fill=black] (v44) {};
		\node at (301,280) [vertex,fill=white] (v45) {};
		\node at (415,294) [vertex,fill=white] (v46) {};
		\node at (209,329) [vertex,fill=white] (v47) {};
		\node at (321,243) [vertex,fill=white] (v48) {};
		\node at (283,243) [vertex,fill=black] (v49) {};
		\node at (320,309) [vertex,fill=black] (v50) {};
		\path
		(v1) edge (v2)
		(v1) edge (v14)
		(v2) edge (v3)
		(v3) edge (v8)
		(v4) edge (v5)
		(v4) edge (v13)
		(v5) edge (v6)
		(v6) edge (v7)
		(v7) edge (v8)
		(v9) edge (v10)
		(v9) edge (v14)
		(v11) edge (v12)
		(v12) edge (v13)
		(v7) edge (v14)
		(v2) edge (v5)
		(v3) edge (v10)
		(v6) edge (v11)
		(v4) edge (v9)
		(v1) edge (v12)
		(v8) edge (v13)
		(v15) edge (v16)
		(v15) edge (v28)
		(v16) edge (v17)
		(v17) edge (v22)
		(v18) edge (v27)
		(v19) edge (v20)
		(v20) edge (v21)
		(v21) edge (v22)
		(v23) edge (v24)
		(v23) edge (v28)
		(v24) edge (v25)
		(v25) edge (v26)
		(v26) edge (v27)
		(v21) edge (v28)
		(v16) edge (v19)
		(v17) edge (v24)
		(v20) edge (v25)
		(v18) edge (v23)
		(v15) edge (v26)
		(v22) edge (v27)
		(v29) edge (v30)
		(v29) edge (v42)
		(v31) edge (v36)
		(v32) edge (v33)
		(v32) edge (v41)
		(v33) edge (v34)
		(v34) edge (v35)
		(v35) edge (v36)
		(v37) edge (v38)
		(v37) edge (v42)
		(v38) edge (v39)
		(v39) edge (v40)
		(v40) edge (v41)
		(v35) edge (v42)
		(v30) edge (v33)
		(v31) edge (v38)
		(v34) edge (v39)
		(v32) edge (v37)
		(v29) edge (v40)
		(v36) edge (v41)
		(v11) edge (v44)
		(v10) edge (v47)
		(v31) edge (v49)
		(v30) edge (v48)
		(v19) edge (v43)
		(v18) edge (v46)
		(v47) edge (v50)
		(v46) edge (v50)
		(v43) edge (v45)
		(v44) edge (v45)
		(v48) edge (v50)
		(v45) edge (v49)
		(v46) edge (v49)
		(v43) edge (v47)
		(v44) edge (v48)
		;
	\end{tikzpicture}
	\caption{The Levi graph of a $25_3$ configuration with no blocking set}
	\label{fig:BS25}
\end{figure}
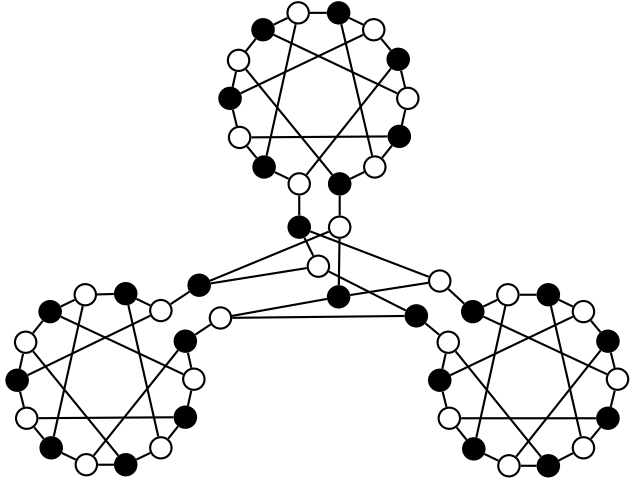

In fact this graph has already appeared in the literature. It appears as Figure 8 in~\cite{mccuaig2000even} as a 2-connected unbalanced 1-extendible cubic bipartite graph. We have the following result.

\begin{theorem}\label{thm:2conn}
A 2-connected symmetric configuration $v_3$ without a blocking set exists if and only if $v\in\{21,22,25\}$ or $v\geq 27$.
\end{theorem}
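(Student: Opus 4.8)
\emph{Proof proposal.} The plan is to prove both directions using only the results assembled in this section, together with the two explicit configurations drawn in Figures~\ref{fig:BS22} and~\ref{fig:BS25}.

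\emph{Necessity.} Suppose a $2$-connected symmetric configuration $v_3$ without a blocking set exists. In particular a blocking-set-free $v_3$ exists, so by Thomassen's observation and Theorem~\ref{thm:detextremal} we must have $v\in\{7,13,19,21,22,25\}$ or $v\ge 27$. It then remains to rule out $v\in\{7,13,19\}$, and for this I would simply quote the complete enumerations (\cite{Betten2000} for $v\le 18$; \cite{EGS2019} and Table~\ref{tab:enum} for $v=19$): every blocking-set-free configuration of order $7$, $13$ or $19$ is $3$-connected, being the Fano plane or obtained from copies of it by the $v+v'-1$ construction as in Theorem~\ref{thm:3conn}. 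Hence $v\in\{21,22,25\}$ or $v\ge 27$.

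\emph{Sufficiency.} I would argue by the residue of $v$ modulo $6$. First, if $v\not\equiv 1\pmod 6$ with $v\in\{21,22\}$ or $v\ge 27$, then a blocking-set-free $v_3$ exists by Theorem~\ref{thm:detextremal}, and by Theorem~\ref{thm:3conn} it cannot be $3$-connected; since its Levi graph is a connected cubic bipartite graph, its connectivity is then exactly $2$. This disposes of every admissible $v$ with $v\not\equiv 1\pmod 6$. For the remaining values, $v\equiv 1\pmod 6$ with $v\ge 25$ (that is, $v\in\{25,31,37,\dots\}$), I would start from the explicit $25_3$ of Figure~\ref{fig:BS25}: the $14$ vertices forming one of its three merged Fano pieces are joined to the rest of the Levi graph by exactly two edges, so that Levi graph has a $2$-edge-cut and hence (being connected, cubic and bipartite) connectivity exactly $2$. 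To pass from $v$ to $v+6$ I would establish a bootstrap step: if there is a $2$-connected blocking-set-free $v_3$, then there is one of order $v+6$. Given such a $v_3$ with Levi graph $G$ and a $2$-edge-cut $\{e,f\}$ splitting $V(G)$ into parts $P$ and $P'$, note that $e$ and $f$ meet at most two block-vertices of $G$ (one each, by bipartiteness), so since $v>2$ there is a block-vertex $b$, lying in $P$ or $P'$, incident to neither $e$ nor $f$. Applying the $v+v'-1$ construction of Theorem~\ref{thm:stitch2} with the Fano plane, gluing at the block $b$, produces a blocking-set-free $(v+6)_3$; in its Levi graph $\{e,f\}$ is still a $2$-edge-cut, because all neighbours of $b$ lie on the same side of the cut as $b$ (otherwise $b$ would be incident to $e$ or $f$), so the three new edges do not cross $\{e,f\}$, while both sides remain non-empty after deleting $b$. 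The new Levi graph is thus again connected, cubic, bipartite and carries a $2$-edge-cut, hence has connectivity exactly $2$. Iterating from $25_3$ covers $v=25,31,37,\dots$; together with the previous case this yields $2$-connected blocking-set-free configurations for all $v\in\{21,22,25\}$ and all $v\ge 27$.

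The step I expect to be the crux is controlling the connectivity \emph{from above} on the sufficiency side: both the $v+v'-1$ construction and the equivalent vertex-sum always introduce a $3$-edge-cut, so one must make sure the connectivity does not jump to $3$. The resolution is that the construction may always be carried out at a block $b$ lying away from, and not incident to, a prescribed $2$-edge-cut, so that such a cut survives; checking this carefully—that the three newly added edges never cross the old cut, and that neither side of the cut collapses when $b$ is deleted—is the one place where a little care is required. By contrast the necessity direction is routine, resting only on Theorem~\ref{thm:detextremal} and the published enumerations for $v\le 19$.
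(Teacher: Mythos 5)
Your proposal is correct and is essentially the argument the paper intends---Theorem~\ref{thm:2conn} is stated there as a summary of the preceding discussion: necessity from Theorem~\ref{thm:detextremal} plus the enumerations showing that all blocking-set-free configurations of orders $7$, $13$, $19$ are $3$-connected, and sufficiency by splitting on $v\bmod 6$, with Theorem~\ref{thm:3conn} forcing connectivity exactly $2$ when $v\not\equiv 1\pmod 6$ and the explicit $25_3$ of Figure~\ref{fig:BS25} (whose Levi graph does indeed have a $2$-edge-cut isolating one $14$-vertex Heawood-type piece) starting the class $v\equiv 1\pmod 6$. Your $+6$ bootstrap---apply Theorem~\ref{thm:stitch2} with a Fano plane at a block not incident to a chosen $2$-edge-cut, so that the cut survives---is precisely the detail the paper leaves implicit for $v\equiv 1\pmod 6$, $v\ge 31$, and your verification of it is sound (in fact a vertex-sum performed on a graph possessing a $2$-edge-cut always yields a graph with a $2$-edge-cut, but your restricted version is the easiest to check). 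One small repair is needed: in the sufficiency direction you invoke Theorem~\ref{thm:detextremal} for the \emph{existence} of a blocking-set-free $v_3$ when $v\not\equiv 1\pmod 6$; as stated, that theorem only supplies a det-extremal connected cubic bipartite graph, and Thomassen's equivalence transfers to configurations immediately only in the non-existence direction, since a Levi graph must in addition have girth at least~$6$. For existence you should instead cite the constructive account of Section~\ref{sec:bsfree} (the explicit $21_3$, $22_3$, $29_3$, $30_3$, $32_3$ together with Theorems~\ref{thm:stitch2} and~\ref{thm:stitch3}), which yields blocking-set-free configurations for $v\in\{21,22,25\}$ and all $v\ge 27$; with that substitution your proof goes through unchanged.
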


\subsection{Enumeration of configurations}\label{sec:enum}
Finally in this section we present some enumeration results.  As stated above, for $7 \leq v \leq 18$ there exist just two symmetric configurations with no blocking set; unique $7_3$ and $13_3$ systems. In~\cite{Gropp1997}, Gropp reported that there exist at least four configurations $19_3$ without a blocking set. Recently the present authors~\cite{EGS2019} have enumerated all configurations $19_3$ and we confirm that there are exactly four without a blocking set. These have a nice description as follows. Because $19=13+7-1$, it must be true that at least some of the four configurations $19_3$ can be obtained by using Theorem~\ref{thm:stitch2} with the unique $13_3$ and $7_3$ configurations without blocking sets. We may use the construction of Theorem~\ref{thm:stitch2} with $v=7,v'=13$, taking all possible choices for the distinguished point and block in the two constituent configurations. To this set of configurations we may add those obtained by taking $v=13,v'=7$ in the same way. Finally, this set of configurations can be reduced to isomorphism class representatives using the \texttt{GAP} package \texttt{DESIGN}~\cite{GAP4,DESIGN}. In this way we were able to determine that the method results in exactly four isomorphism classes of configurations $19_3$ with no blocking set. Thus these correspond precisely to the four in the enumeration; this description of the four $19_3$ configurations was known to Gropp and the construction is described in~\cite{DiPaola1991,Gropp1997}. The blocks of these four $19_3$ systems are as follows:

\begin{config}
\-\ 012 034 056 135 19a 236 245 4bc 678 79b 7ac 89c 8de afi bgh dfh dgi efg ehi

\-\ 012 034 056 135 146 29a 2bc 367 4gh 5fi 79b 7ac 89c 8ab 8de dfh dgi efg ehi

\-\ 012 034 056 135 146 236 2de 4fi 5gh 79b 7ac 7fh 89c 8ab 8gi 9ad bcd efg ehi

\-\ 012 034 056 135 146 29a 2bc 367 458 79b 7ac 89c 8de afi bgh dfh dgi efg ehi
\end{config}

Although there is no symmetric configuration $20_3$ without a blocking set, increases in computer power allowed us to extend the enumeration of symmetric configurations to the case where  $v=20$ and this information is summarised in Table~\ref{tab:enum}. Our enumeration, in common with our previous results~\cite{EGS2019}, was carried out using the program \texttt{confibaum} as used in~\cite{Betten2000}. We are grateful to G. Brinkmann for this program and for assistance in our previous enumeration.

The enumeration confirms the fact that there is no symmetric configuration $20_3$ without a blocking set. For completeness we describe here the properties enumerated in Table~\ref{tab:enum}, following the notation of~\cite{Betten2000}. For a configuration $\mathcal{X}$, an \emph{automorphism} is a permutation of the points and blocks of $\mathcal{X}$ which preserves incidence. The \emph{dual} of $\mathcal{X}$ is the configuration obtained by reversing the roles of the points and blocks of $\mathcal{X}$. If $\mathcal{X}$ is isomorphic to its dual, we say it is \emph{self-dual}, and an isomorphism between $\mathcal{X}$ and its dual is an \emph{anti-automorphism}. An anti-automorphism of $\mathcal{X}$ of order two is called a \emph{polarity}, and a configuration admitting such an isomorphism is \emph{self-polar}. The group of all automorphisms of $\mathcal{X}$ (preserving the roles of points and blocks) is denoted by $\Aut(\mathcal{X})$, and the group of all automorphisms and anti-automorphisms by $A(\mathcal{X})$. If $\Aut(\mathcal{X})$ acts transitively on the points of $\mathcal{X}$ then we say $\mathcal{X}$ is \emph{point-transitive}. A \emph{flag} of $\mathcal{X}$ is an ordered pair $(p,B)$ with $p\in B$; if $\Aut(\mathcal{X})$ acts transitively on the set of flags then we say $\mathcal{X}$ is \emph{flag-transitive}; if $A(\mathcal{X})$ acts transitively on the set of flags regarded as \emph{unordered} pairs, then we say $\mathcal{X}$ is \emph{weakly flag-transitive}. A \emph{cyclic} configuration $\mathcal{X}$ is one admitting a cyclic subgroup of $\Aut(\mathcal{X})$ acting regularly on points. 
\begin{table}[h]
	\centering\setlength{\tabcolsep}{1em}
	\begin{tabular}{rrrrrrrrrr}
		\hline
		$v$ & $a$ & $b$ & $c$ & $d$ & $e$ & $f$ & $g$ & $h$ & $i$ \\
		\hline
		7 & 1 & 1 & 1 & 1 & 1 & 1 & 1 & 1 & 0 \\ 
		8 & 1 & 1 & 1 & 1 & 1 & 1 & 1 & 0 & 0 \\ 
		9 & 3 & 3 & 3 & 2 & 1 & 1 & 1 & 0 & 0 \\ 
		10 & 10 & 10 & 10 & 2 & 1 & 1 & 1 & 0 & 0 \\ 
		11 & 31 & 25 & 25 & 1 & 1 & 0 & 0 & 0 & 0 \\ 
		12 & 229 & 95 & 95 & 4 & 3 & 1 & 1 & 0 & 0 \\ 
		13 & 2,036 & 366 & 365 & 2 & 2 & 1 & 1 & 1 & 0 \\ 
		14 & 21,399 & 1,433 & 1,432 & 3 & 3 & 1 & 1 & 0 & 1 \\ 
		15 & 245,342 & 5,802 & 5,799 & 5 & 4 & 1 & 1 & 0 & 1 \\ 
		16 & 3,004,881 & 24,105 & 24,092 & 6 & 4 & 2 & 2 & 0 & 4 \\ 
		17 & 38,904,499 & 102,479 & 102,413 & 2 & 2 & 0 & 0 & 0 & 13 \\ 
		18 & 530,452,205 & 445,577 & 445,363 & 9 & 5 & 1 & 1 & 0 & 47 \\ 
		19 & 7,597,040,188 & 1,979,772 & 1,979,048 & 3 & 3 & 1 & 1 & 4 & 290 \\ 
		20 & 114,069,332,027 & 8,981,097 & 8,978,373 & 9 & 5 & 2 & 2 & 0 & 2,413 \\ 
		\hline
		\multicolumn{10}{p{15cm}}{\footnotesize
			\textbf{Note}: $a$ is the number of configurations $v_3$; $b$ is the number of self-dual configurations; $c$ is the number of self-polar configurations; $d$ is the number of point-transitive configurations; $e$ is the number of cyclic configurations; $f$ is the number of flag-transitive configurations; $g$ is the number of weakly flag-transitive configurations; $h$ is the number of connected blocking set-free configurations; $i$ is the number of disconnected configurations.
		} 
	\end{tabular}
	\caption{Numbers of configurations $v_3$}
	\label{tab:enum}
\end{table}

Note that for consistency with previously published results, the counts in Table~\ref{tab:enum} include disconnected configurations.

The next case to consider is $v = 21$. A $21_3$ configuration without a blocking set can be constructed from three $7_3$ configurations by Theorem~\ref{thm:stitch3}.  Because the automorphism group of the Fano plane is flag-transitive, all systems constructed by this method are isomorphic.  We show that this is the unique system of this order without a blocking set. From Theorem~\ref{thm:3conn}, any such system is 2-connected. 

The first observation to make is that a cubic bipartite graph with edge connectivity 2 and edge cutset $\{ab,cd\}$ must take the form illustrated in Figure~\ref{fig:conn2}. In the diagram, the circles represent the components $C_1,C_2$ following the edge cut and the black/white colouring of the vertices represents the bipartition of the graph.

\begin{figure}\centering
	\begin{tikzpicture}[
		x=1cm,
		y=-1cm,
		inner sep=0.1mm,
		scale=0.6,
		thick,
		point/.style={circle,draw,minimum size=6,fill=black},
		block/.style={circle,draw,minimum size=6,fill=white},
		edge label/.style={fill=white}
		]
		\node at (5,2) [point,label={$a$}] (v1) {};
		\node at (5,4) [block,label={$d$}] (v2) {};
		\node at (8,2) [block,label={$b$}] (v3) {};
		\node at (8,4) [point,label={$c$}] (v4) {};
		\node at (3,3) [label={$C_1$}] {};
		\node at (10,3) [label={$C_2$}] {};
		
		\draw (3,3) circle(3) [dotted];
		\draw (10,3) circle(3) [dotted];
		
		\path
		(v1) edge (v3)
		(v2) edge (v4)
		;
	\end{tikzpicture}
	\caption{A cubic bipartite graph with edge connectivity 2}
	\label{fig:conn2}
\end{figure}
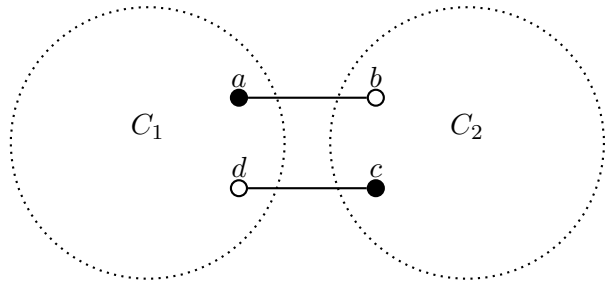

Suppose now that the graph in Figure~\ref{fig:conn2} is the Levi graph of a symmetric configuration $21_3$. Say the components $C_1,C_2$ following the edge cut have respective orders $n_1$ and $n_2$, with $n_1+n_2=42$. Then $C_1$ has $n_1-2$ vertices of valency 3, and 2 vertices ($a$ and $d$) of valency 2. In other words, it is a subcubic bipartite graph with $3n_1/2-1$ edges. A similar argument holds for $C_2$, where the distinguished vertices of valency 2 are $b$ and $c$.

The problem of constructing all cubic bipartite graphs with edge connectivity 2 can therefore be reduced to finding all possible components $C_1,C_2$. Note that a component is not necessarily an edge-deleted Levi graph of some configuration; this will be the case for $C_1$ for example if and only if the distance between the distinguished vertices $a$ and $d$ is at least 5. But these vertices may be at distance 3 or even 1. However the component can contain no 4-cycles. By using the \texttt{genbg} utility provided in the \texttt{nauty} package~\cite{mckay201494}, we may use a computer to construct all possible components. This computer search shows that the smallest possible one of these has order 14 and is unique; it is an edge-deleted Heawood graph. At order 16 there are three possible components: one with $a,d$ at distance 5 which is an edge-deleted Levi graph of the $8_3$ configuration; one with $a,d$ at distance 3 and one with $a,d$ adjacent.

In principle then, all cubic bipartite graphs with edge connectivity 2, girth at least 6 and order $42$ can be constructed by finding all possible components $C_1,C_2$ such that $n_1+n_2=42$ and joining their distinguished vertices as in Figure~\ref{fig:conn2}. The join can be done in two (possibly) non-isomorphic ways and is subject to the constraint that at least one of $C_1,C_2$ must have its distinguished vertices non-adjacent (to avoid creating a 4-cycle).

We therefore proceed as follows. For $n=14,16,\ldots,28$ we generate using \texttt{genbg} all subcubic bipartite graphs of order $n$ and girth at least 6 with $3n/2-1$ edges. Then the idea is that we connect up a graph of order $n$ with a graph of order $42-n$ as above, subject to the constraint noted. The resulting cubic graph will have girth at least 6; this process therefore generates the entire population of 2-edge-connected cubic bipartite graphs of order 42. Any blocking set free configuration at $v=21$ must have a Levi graph within this population.

Although there are a large number of possible components, it turns out that with modern computers the generation of the components and hence the enumeration of all possible Levi graphs of configurations $21_3$ could be completed. Exactly one of the resulting Levi graphs arose from a configuration which failed to have a blocking set. It is illustrated in Figure~\ref{fig:BS22} and its blocks are as follows.

\begin{config}
\-\ 012 034 056 135 146 236 24l 58f 79c 7ak 7bl 89a 8ck 9bk abc deh dfj dgi egj eil fgh hij
\end{config}

We therefore have the following result.

\begin{theorem}\label{thm:bsfree21}
	There is a unique symmetric configuration $21_3$ having no blocking set; it is the configuration obtained by using three Fano planes in the construction of Theorem~\ref{thm:stitch3}.
\end{theorem}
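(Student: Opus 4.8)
The plan is to reduce uniqueness to a finite computer search over candidate Levi graphs, using the structural analysis already set up in Section~\ref{sec:conn}, and to dispatch existence by the flag-transitivity of the Fano plane. For existence, applying Theorem~\ref{thm:stitch3} with $v^1=v^2=v^3=7$ to three copies of the Fano plane yields a $21_3$ configuration with no blocking set; since the automorphism group of the Fano plane is flag-transitive, every choice of the three distinguished flags produces an isomorphic result, so this configuration is well defined up to isomorphism. The substance of the theorem is that it is the only such configuration, and this I would establish by enumerating all possible Levi graphs.

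Let $\mathcal{X}$ be any symmetric $21_3$ configuration without a blocking set and let $G$ be its Levi graph: a connected cubic bipartite graph of girth at least $6$ on $42$ vertices. As recalled above, a connected cubic bipartite graph cannot have connectivity $1$, and by Theorem~\ref{thm:3conn} it cannot have connectivity $3$, since $21\not\equiv 1\pmod 6$. Hence $G$ is exactly $2$-connected, equivalently $2$-edge-connected, so it has the form of Figure~\ref{fig:conn2}: a two-element edge cut whose removal splits $G$ into connected components $C_1,C_2$, each a subcubic bipartite graph of girth at least $6$ with exactly two vertices of degree $2$ (the endpoints of the cut edges in that component), on $n_i$ vertices with $3n_i/2-1$ edges, where $n_1+n_2=42$. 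Since $3n_i/2-1$ is an integer, $n_i$ is even, and since the smallest such component has order $14$ we have $n_i\in\{14,16,\dots,28\}$.

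The enumeration then proceeds in two stages. First, for each even $n$ with $14\le n\le 28$ I would use \texttt{genbg} from \texttt{nauty} to generate all subcubic bipartite graphs on $n$ vertices of girth at least $6$ having $3n/2-1$ edges, and keep precisely those with two vertices of degree $2$; these are the admissible components, recorded together with the distance between their two distinguished vertices ($1$, $3$, or $\ge 5$). Second, for every pair $(C_1,C_2)$ with $n_1+n_2=42$ I would form all graphs obtained by inserting the two cut edges joining the distinguished vertices, in each of the at most two non-isomorphic admissible ways, discarding any join in which both components have their distinguished vertices adjacent (which would create a $4$-cycle). By the structure of Figure~\ref{fig:conn2}, every $2$-edge-connected cubic bipartite graph of order $42$ and girth at least $6$ — and hence the Levi graph of $\mathcal{X}$, which is automatically connected — arises exactly this way. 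For each graph produced I would check whether the corresponding $21_3$ configuration admits a blocking set, i.e.\ a $2$-weak-colouring, and finally reduce the blocking-set-free survivors to isomorphism using \texttt{GAP} with \texttt{DESIGN}. The search returns a single configuration, with the blocks displayed above, and a direct check identifies it with the three-Fano-plane configuration of Theorem~\ref{thm:stitch3}.

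The main obstacle is computational feasibility rather than any conceptual difficulty: the number of admissible components grows quickly with $n$, and each admissible pair must be joined in every way and then tested, so a naive search space would be prohibitive. The mitigating factors are that \texttt{genbg} can be instructed to impose the girth and edge-count constraints at generation time, eliminating most subcubic graphs immediately, and that isomorphic Levi graphs can be rejected early; with these optimisations the enumeration completes on current hardware, which is exactly what makes the argument go through.
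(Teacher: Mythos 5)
Your proposal follows essentially the same route as the paper: existence via Theorem~\ref{thm:stitch3} with three Fano planes (flag-transitivity giving a unique result up to isomorphism), and uniqueness by noting the Levi graph must be exactly 2-connected by Theorem~\ref{thm:3conn}, decomposing it as in Figure~\ref{fig:conn2} into two subcubic bipartite components with $3n_i/2-1$ edges, generating all such components with \texttt{genbg}, rejoining them subject to the 4-cycle constraint, and checking the survivors for blocking sets. This matches the paper's argument in both structure and computational detail, so there is nothing to add.
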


As noted above, the symmetric configuration $22_3$ with no blocking set illustrated in Figure~\ref{fig:BS22} was found by Dorwart and Gr\"unbaum~\cite{Dorwart1992}. In fact we can show that this also is the unique such configuration. We use the same procedure as for the $21_3$ configuration, but the search can be considerably shortened by the following simple lemma.
\begin{lemma}\label{lem:ham}
	Let $v\geq 8$ be an even number. If a symmetric configuration $v_3$ contains no blocking set, then its Levi graph is non-Hamiltonian.
\end{lemma}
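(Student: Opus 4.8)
The plan is to argue by contradiction using a parity/counting argument on a Hamilton cycle of the Levi graph. Suppose $v \geq 8$ is even and the symmetric configuration $v_3$ has a Hamiltonian Levi graph $G$; I will produce a blocking set, contradicting the hypothesis. Since $G$ is bipartite with parts the $v$ points and $v$ blocks, a Hamilton cycle $H$ has length $2v$ and alternates between points and blocks. First I would fix such a Hamilton cycle and use it to two-colour the points: as we traverse $H$, the points appear in a fixed cyclic order $p_1, p_2, \ldots, p_v$ (reading off only the point-vertices in the order they are met), and I would colour $p_j$ red if $j$ is odd and blue if $j$ is even. Since $v$ is even this is a proper $2$-colouring of the cyclic sequence, so consecutive points along $H$ always receive different colours.

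The key step is to show this colouring is a weak colouring of the configuration, i.e. that the red points form a blocking set. Consider any block $B$, viewed as a vertex of $G$ of degree $3$; it has three neighbours in $G$, namely its three points, and exactly two of the edges at $B$ lie on $H$ while one does not. So $H$ enters and leaves $B$ through two of its three points, say $x$ and $y$, and these are consecutive points of $H$ in the cyclic order $p_1,\ldots,p_v$ (the block vertex $B$ sits between them on $H$ and contributes no point). Hence $x$ and $y$ receive different colours under my rule. Therefore every block contains both a red and a blue point, so the set of red points is a blocking set and $\chi_w = 2$, contradicting the assumption that the configuration has no blocking set. This forces $G$ to be non-Hamiltonian.

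The step I expect to need the most care is the claim that the two "active" neighbours $x,y$ of a block vertex $B$ on the Hamilton cycle are genuinely consecutive in the induced cyclic order of point-vertices — this is immediate because between $x$ and $y$ along $H$ lies only $B$ (a block vertex) and nothing else, so no point-vertex intervenes; I would spell this out explicitly. I would also note where evenness of $v$ is used: it is exactly what makes the alternating red/blue assignment around the cyclic sequence $p_1,\ldots,p_v$ consistent (for odd $v$ one would be forced to give $p_v$ and $p_1$ the same colour, and the argument breaks, which is consistent with the existence of blocking-set-free configurations of odd order such as the Fano plane). Finally I would remark that this lemma is exactly what shortens the search: any blocking-set-free $22_3$ configuration must have a non-Hamiltonian Levi graph, eliminating the bulk of candidate cubic bipartite graphs on $44$ vertices before the component-gluing enumeration is run.
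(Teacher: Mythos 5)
Your proof is correct and is essentially the same as the paper's: colour the points alternately by their position along the Hamilton cycle, note that each block vertex lies on the cycle between two consecutively-visited points which therefore get different colours, and use evenness of $v$ to make the alternating colouring consistent, yielding a blocking set. The paper states this more briefly but the argument is identical.
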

\begin{proof}
	Suppose that the Levi graph contains a Hamiltonian cycle $p_0 B_0 p_1 B_1 \ldots p_{v-1} B_{v-1} p_0$ where the $p_i$ and $B_i$ respectively represent point and block vertices, $i=0,1,\ldots, v-1$. Colour the even numbered points $p_0,p_2,\ldots,p_{v-2}$ red and the odd numbered points blue. Since $v$ is even, no block is monochromatic and so the even numbered points form a blocking set for the configuration.
\end{proof}
Lemma~\ref{lem:ham} and Theorem~\ref{thm:3conn} show that if a symmetric configuration $22_3$ has no blocking set, its Levi graph must be a 2-connected non-Hamiltonian cubic bipartite graph of order 44. Thus an enumeration of blocking set free configurations on 22 points can be achieved by an exhaustive enumeration of such graphs.

We use the same basic search methodology as in the $21_3$ case, but this time we extend the generation of the components $C_1,C_2$ up to order 30. To guarantee that the resulting graph of order 44 will be non-Hamiltonian, we require that at least one of $C_1,C_2$ must fail to have a Hamiltonian path between its distinguished vertices. (To check the existence of a Hamiltonian path, we create an augmented graph in which the two distinguished vertices of valency 2 are joined to a new vertex; then the augmented graph is Hamiltonian if and only if there is a Hamiltonian path between the distinguished vertices in the original graph. This technique allows us to use the well tested \texttt{cubhamg} utility in the \texttt{nauty} package, rather than writing new software for the Hamiltonicity test.)

Restricting our search to pairs $C_1,C_2$ such that at least one component fails to have a Hamiltonian path between the distinguished vertices gives a very substantial reduction in the number of component pairs to be considered. We were thus able to complete the enumeration of the 2-connected non-Hamiltonian cubic bipartite graphs of order 44, and found that only one of these is the Levi graph of a blocking set free configuration $22_3$. Thus we have the following result.

\begin{theorem}\label{thm:bsfree22}
	There is a unique symmetric configuration $22_3$ having no blocking set; it is the configuration of Dorwart and Gr\"unbaum~\cite{Dorwart1992}.
\end{theorem}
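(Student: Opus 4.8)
The plan is to reduce the statement to a finite search over cubic bipartite graphs, following the same route as the proof of Theorem~\ref{thm:bsfree21} but now exploiting Lemma~\ref{lem:ham}. First I would observe that $22\not\equiv 1\pmod 6$, so by Theorem~\ref{thm:3conn} a symmetric configuration $22_3$ without a blocking set cannot be $3$-connected; hence its Levi graph $G$ is a connected cubic bipartite graph of girth at least $6$ with connectivity exactly $2$. Since $22$ is even, Lemma~\ref{lem:ham} forces $G$ to be non-Hamiltonian as well. Conversely, every connected cubic bipartite graph of girth at least $6$ on $44$ vertices is the Levi graph of some symmetric configuration $22_3$, and isomorphic graphs give isomorphic configurations, so it suffices to enumerate all $2$-connected non-Hamiltonian cubic bipartite graphs of girth at least $6$ and order $44$ and, for each, to test whether the corresponding configuration admits a blocking set.

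The second step is to decompose along a $2$-edge-cut, exactly as pictured in Figure~\ref{fig:conn2}. Deleting such a cut leaves two pieces $C_1$ (of order $n_1$) and $C_2$ (of order $n_2 = 44-n_1$), each a subcubic bipartite graph of girth at least $6$ with $3n_i/2-1$ edges and exactly two vertices of degree $2$ (its \emph{distinguished} vertices), and conversely any such pair, joined across the cut in the (one or two) colour-respecting ways, gives a graph of the required shape provided not both pairs of distinguished vertices are adjacent — otherwise a $4$-cycle through the two cut edges appears. The decisive observation, which both makes the search feasible and certifies its completeness, is that a Hamiltonian cycle of $G$ must traverse both cut edges (using exactly one of them disconnects $G$), hence restricts to a Hamiltonian path between the two distinguished vertices in each of $C_1$ and $C_2$; and conversely a pair of such paths reassembles into a Hamiltonian cycle of $G$. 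Therefore $G$ is non-Hamiltonian \emph{if and only if} at least one of $C_1,C_2$ has no Hamiltonian path joining its two distinguished vertices.

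The third step is the computation itself. Using \texttt{genbg} from \texttt{nauty}~\cite{mckay201494} I would generate, for each even $n$ with $14\le n\le 30$, all subcubic bipartite graphs of girth at least $6$ on $n$ vertices having $3n/2-1$ edges, recording for each whether it has a Hamiltonian path between its two degree-$2$ vertices (tested by adjoining a new vertex adjacent to both and running \texttt{cubhamg} on the resulting cubic graph). Then, ranging over all pairs with $n_1+n_2=44$ in which at least one component lacks the distinguished Hamiltonian path and at least one component has its distinguished vertices non-adjacent, I would form every reassembled graph, reduce to isomorphism-class representatives, and finally test each associated configuration for a blocking set. The assertion to verify computationally is that exactly one graph survives as the Levi graph of a blocking-set-free configuration; since the configuration of Dorwart and Gr\"unbaum~\cite{Dorwart1992} shown in Figure~\ref{fig:BS22} is such a configuration, it must be the one found, and the theorem follows.

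The hard part is entirely one of feasibility. Without the Hamiltonicity restriction the number of admissible component pairs, and hence of candidate graphs of order $44$, is far too large; it is precisely the equivalence ``$G$ non-Hamiltonian $\iff$ some component lacks the distinguished Hamiltonian path'' that lets us discard the overwhelming majority of pairs before any reassembly is performed, and this is where the even order of $22$ (via Lemma~\ref{lem:ham}) supplies leverage that was unavailable in the $21_3$ case. A smaller but genuine point requiring care is the bookkeeping around reassembly and canonicalisation — every colour-respecting join must be carried out and the duplicates it creates removed — so that the final count of one is honestly a count of isomorphism classes.
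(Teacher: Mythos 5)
Your proposal is correct and follows essentially the same route as the paper: reduce via Theorem~\ref{thm:3conn} and Lemma~\ref{lem:ham} to enumerating 2-connected non-Hamiltonian cubic bipartite graphs of girth at least 6 on 44 vertices, decompose along a 2-edge-cut as in Figure~\ref{fig:conn2}, generate the subcubic components with \texttt{genbg}, filter by the existence of a Hamiltonian path between the two degree-2 vertices using the augmented-vertex trick with \texttt{cubhamg}, reassemble, and check the survivors for blocking sets. The only (welcome) addition is that you state explicitly the equivalence ``$G$ non-Hamiltonian $\iff$ some component lacks the distinguished Hamiltonian path'', which certifies the completeness of the restricted search that the paper uses implicitly.
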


Next, a $25_3$ configuration without a blocking set can be constructed by Theorem~\ref{thm:stitch2} using either two $13_3$s or a $7_3$ with one of the $19_3$s. Again in~\cite{Gropp1997}, Gropp reports that there are at least 19 such configurations. With the assistance of computers in a similar way to the construction of the $19_3$ configurations, in fact we find 23 isomorphism classes of configurations $25_3$ arising from Theorem~\ref{thm:stitch2} in this way. The blocks of these are given in the Appendix.

All of these systems have connectivity 3 and we now know that there is at least one further system which is 2-connected; thus 25 is the smallest order for which there exist both 3-connected and 2-connected blocking set free systems. Using Theorem~\ref{thm:3conn} we can now make an enumeration of 3-connected symmetric configurations $v_3$ without a blocking set for $v\in\{7,13,19,25,31,37,43\}$. We do this by repeated application of the $v+v'-1$ construction in all possible ways, and reducing the resulting configurations to a set of isomorphism class representatives. The results are shown in Table~\ref{tab:conn3enum}.

\begin{table}\centering
\begin{tabular}{crrr}
\hline
$v$ & Configurations & Self-dual & Self-polar \\
\hline
7 & 1 & 1 & 1 \\
13 & 1 & 1 & 1 \\
19 & 4 & 2 & 2 \\
25 & 23 & 5 & 5 \\
31 & 182 & 14 & 14 \\
37 & 1,747 & 45 & 45 \\
43 & 19,485 & 145 & 145 \\
\hline
\end{tabular}
\caption{Numbers of 3-connected blocking set free configurations $v_3$}
\label{tab:conn3enum}
\end{table}

\section{Strong colourings}\label{sec:strong}
In this section we turn our attention to the strong chromatic number $\chi_s$ of a symmetric configuration, and also investigate its relationship to the weak chromatic number $\chi_w$. Our first observation is that the strong chromatic number of a configuration is equal to the chromatic number of its associated graph. Since the associated graph is regular of valency 6 and contains triangles, it follows from Brooks' Theorem that $\chi_s\in\{3,4,5,6,7\}$, and $\chi_s=7$ if and only if the associated graph is a complete graph; that is to say, the configuration is the Fano plane.

The first case to consider is $\chi_s=3$. An immediate observation is that each block of the configuration must contain exactly one point from each of the three colour classes, and so $v\equiv 0\!\pmod{3}$. By colouring two classes in the strong colouring (say) red and the third blue, we see that $\chi_s=3$ implies $\chi_w=2$. 

A nice description of strongly 3-chromatic configurations is as follows. From the associated graph of the configuration, form the subgraph induced by the points from any two of the three colour classes. It is easy to see that this induced subgraph is a cubic bipartite graph (not necessarily connected), and a given strong 3-colouring of a configuration gives rise to three cubic bipartite graphs in this way by deleting each of the colour classes. Given any cubic bipartite graph $\Gamma$, it is natural to ask whether $\Gamma$ can arise in this way. Our next result answers this in the affirmative.

\begin{theorem}\label{thm:delgraph}
Let $m\geq 3$ and let $\Gamma$ be a cubic bipartite graph of order $2m$. Then there exists a strongly 3-chromatic symmetric configuration $\mathcal{X}$ on $3m$ points, and a strong 3-colouring of $\mathcal{X}$, such that the induced subgraph of the associated graph of $\mathcal{X}$ formed by deleting the points of one colour class is isomorphic to $\Gamma$.
\end{theorem}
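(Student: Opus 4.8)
The plan is to translate the statement into a balanced edge-colouring problem on $\Gamma$ and then to settle that problem by an alternating-path argument.

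First I would reformulate what is being asked. In a strong $3$-colouring of a symmetric $(3m)_3$ configuration every block is rainbow and so meets each colour class exactly once; since each point lies in three blocks, each of the three colour classes has exactly $m$ points, and deleting one class from the associated graph leaves precisely the bipartite graph between the other two classes, which a one-line degree count shows to be cubic. The theorem is the converse of this observation, so I would write $V(\Gamma)=U\cup W$ for the (necessarily balanced) bipartition, with $|U|=|W|=m$, take a fresh set $Z$ with $|Z|=m$, and for a map $\phi\colon E(\Gamma)\to Z$ form the incidence structure $\mathcal{X}_\phi$ on point set $U\cup W\cup Z$ whose blocks are the triples $\{u,w,\phi(uw)\}$ as $uw$ runs over $E(\Gamma)$. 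Colouring $U$, $W$, $Z$ makes every block of $\mathcal{X}_\phi$ rainbow, so $\chi_s=3$ (the bound $\chi_s\ge 3$ holding for every configuration), and the subgraph of the associated graph of $\mathcal{X}_\phi$ induced on $U\cup W$ is exactly $\Gamma$, since $u$ and $w$ share a block precisely when $uw\in E(\Gamma)$ while no two points of the same colour class ever share a block. A routine check of the axioms then shows that $\mathcal{X}_\phi$ is a symmetric $(3m)_3$ configuration exactly when $\phi$ is a \emph{proper} edge-colouring of $\Gamma$ in which \emph{every} colour of $Z$ is used \emph{exactly three times}: properness is what controls the pairs $\{u,z\}$ and $\{w,z\}$, while using each colour three times is what gives every point of $Z$ exactly three blocks through it.

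The heart of the proof is then to produce such a $\phi$, i.e.\ to show that every cubic bipartite graph on $2m$ vertices has a proper edge-colouring with exactly $m$ colours, each used three times. I would start from a proper $3$-edge-colouring of $\Gamma$ --- which exists because a bipartite graph has chromatic index equal to its maximum degree --- and regard it as a colouring with the $m$-element palette $Z$ in which $m-3$ colours are unused. Then I would rebalance by alternating-path switches: whenever the colour classes are not all of size $3$ there is a class $M_i$ with $|M_i|\ge 4$ and a class $M_j$ with $|M_j|\le 2$; the subgraph $M_i\cup M_j$ is a disjoint union of paths and even cycles alternating between the colours $i$ and $j$, and summing $|M_i\cap C|-|M_j\cap C|$ over the components $C$ shows that at least one component is a path whose two terminal edges both have colour $i$. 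Interchanging $i$ and $j$ along such a path keeps the colouring proper --- an interior vertex merely has its two colours swapped, and each terminal vertex trades its unique $i$-edge for its unique $j$-edge --- and moves exactly one edge from $M_i$ to $M_j$, strictly decreasing the potential $\sum_c\bigl|\,|M_c|-3\,\bigr|$. Iterating drives the potential to zero, at which point all $m$ colour classes have size exactly $3$; this $\phi$ completes the construction. (This rebalancing is a special case of de Werra's equitable edge-colouring theorem for bipartite graphs, which could be invoked instead.)

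I expect the alternating-path rebalancing to be the only genuine obstacle; the configuration-axiom verification and the identification of the deleted-class subgraph with $\Gamma$ are mechanical. One small loose end is indecomposability: if $\Gamma$ is connected then so is $\mathcal{X}_\phi$, since its associated graph contains $\Gamma$ spanning $U\cup W$ and every point of $Z$ has a neighbour in $U\cup W$; if $\Gamma$ is disconnected one instead picks $\phi$ so that its colour classes link the components of $\Gamma$, which keeps $\mathcal{X}_\phi$ connected.
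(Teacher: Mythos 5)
Your proposal is correct, and its core construction coincides with the paper's: your map $\phi\colon E(\Gamma)\to Z$ with every colour class a matching of size $3$ is exactly a decomposition of $E(\Gamma)$ into $m$ copies of $3K_2$, and your blocks $\{u,w,\phi(uw)\}$ are precisely the triangles the paper uses when it joins each vertex of its third class $V_3$ to the six endpoints of one $3K_2$ and takes the resulting triangle decomposition of the $6$-regular tripartite graph. The genuine difference is how the key decomposition is obtained: the paper simply cites the $3K_2$-decomposition theorem of Bialostocki and Roditty, whereas you prove the needed special case directly by starting from a proper $3$-edge-colouring (K\H{o}nig) and rebalancing with Kempe-chain switches, which is a correct argument (your potential $\sum_c\bigl|\,|M_c|-3\,\bigr|$ in fact drops by $2$ at each switch) and is the same mechanism underlying de Werra's equitable edge-colouring theorem, which you rightly note could be invoked instead. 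What your route buys is a self-contained proof and an explicit, efficient algorithm for producing the configuration; what the paper's citation buys is brevity. Your closing remarks on indecomposability go beyond the paper, which does not discuss connectivity in this proof at all; your fix for disconnected $\Gamma$ (choosing classes that straddle components, e.g.\ by exchanging single edges between size-$3$ classes lying in different components) is only sketched, but it is easy to make precise and is not needed for the literal statement of the theorem.
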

\begin{proof}
Our aim is to construct a new 6-regular graph $\Gamma'$ on $3m$ vertices to be the associated graph of our configuration $\mathcal{X}$. We begin by creating three sets of vertices $V_1,V_2,V_3$, each of order $m$. Between the vertices of $V_1$ and $V_2$ we add edges such that the induced subgraph on $V_1\cup V_2$ is isomorphic to $\Gamma$. We now note that by~\cite{Bialostocki1982}, the edges of $\Gamma$ can be decomposed into a collection of $m$ copies of the graph $3K_2$, i.e. a collection of $m$ sets of three disjoint edges. Each of the $m$ sets of three edges contains exactly six vertices; we construct $\Gamma'$ by joining each of the $m$ vertices in $V_3$ to all the vertices in exactly one of these sets.

Since $\Gamma'$ is a 6-regular tripartite graph, any decomposition of its edge set into triangles will yield a strongly 3-chromatic configuration on $3m$ points, where the colour classes are the sets $V_1,V_2,V_3$. A suitable triangle decomposition is given by using each edge between vertices in $V_1$ and $V_2$ together with the two edges joining its endpoints to a vertex in $V_3$. By construction, the configuration $\mathcal{X}$ represented by this decomposition has the required properties, taking the colour class assigned to $V_3$ as the one to be deleted.
\end{proof}

In general, the three cubic bipartite graphs formed by deleting a colour class from a strongly 3-chromatic configuration in this way will not be isomorphic. Another natural question is whether we can construct strongly 3-chromatic configurations in such a way that, with a suitable colouring, the resulting colour class deleted graphs are actually isomorphic. It turns out that we can do this for any $v\geq 9$ which is a multiple of 3.

\begin{theorem}\label{thm:col3iso}
Let $s\geq 3$ and let $v=3s$. Then there is a cubic bipartite graph $\Gamma$ of order $2s$, and a strongly 3-chromatic symmetric configuration $\mathcal{X}$ on $v$ points, such that deleting any of the three colour classes in a suitable colouring of $\mathcal{X}$ we obtain a graph isomorphic to $\Gamma$.
\end{theorem}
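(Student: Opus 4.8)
The plan is to reuse the cyclic configuration exhibited in the proof of Theorem~\ref{thm:minblocking}. Let $\mathcal{X}$ be the symmetric configuration $(3s)_3$ with point set $\{a_i,b_i,c_i:0\le i\le s-1\}$ and blocks $\{a_i,b_i,c_{i+1}\}$, $\{a_i,b_{i+1},c_i\}$, $\{a_{i+1},b_i,c_i\}$ for $0\le i\le s-1$, subscripts modulo $s$ (Theorem~\ref{thm:minblocking} already records that this is a connected symmetric configuration when $s\ge 3$). The first observation is that the three classes $A=\{a_i\}$, $B=\{b_i\}$, $C=\{c_i\}$ form a strong $3$-colouring, since every block meets each class in exactly one point; as every configuration contains a block, hence a triangle in its associated graph, $\chi_s\ge 3$ always, so $\chi_s(\mathcal{X})=3$.

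The key step is to establish the symmetry that drives the argument: the map $\phi$ defined on points by $a_i\mapsto b_i$, $b_i\mapsto c_i$, $c_i\mapsto a_i$ is an automorphism of $\mathcal{X}$. This is a direct check that $\phi$ permutes the three block families cyclically; for instance $\{a_i,b_{i+1},c_i\}\mapsto\{b_i,c_{i+1},a_i\}=\{a_i,b_i,c_{i+1}\}$, and the other two families likewise map onto one another. The only point requiring care is the subscript arithmetic modulo $s$, and the hypothesis $s\ge 3$ (which is also exactly what makes $i-1,i,i+1$ distinct, so that $\mathcal{X}$ really is a configuration).

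Now transfer this to the associated graph. An automorphism of $\mathcal{X}$ induces an automorphism of its associated graph, so $\phi$ acts on the associated graph while cyclically permuting $A,B,C$. Hence $\phi$ carries the subgraph induced on $A\cup B$ (the graph obtained by deleting colour class $C$) isomorphically onto the subgraph induced on $B\cup C$ (delete $A$), and that onto the subgraph induced on $C\cup A$ (delete $B$). So the three colour-class-deleted graphs are pairwise isomorphic; let $\Gamma$ denote their common isomorphism type. Reading off the edges, $\Gamma$ is the graph on parts $\{a_0,\dots,a_{s-1}\}$, $\{b_0,\dots,b_{s-1}\}$ with $a_i$ adjacent to $b_{i-1},b_i,b_{i+1}$; it has order $2s$ and, since $s\ge 3$, is $3$-regular and bipartite, which completes the proof.

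The only real work is the routine verification that $\phi$ respects the block set (bookkeeping with indices mod $s$); everything else is immediate. An alternative packaging that avoids naming $\phi$ is to compute all three deleted graphs directly and notice each has the description ``$x_i\sim y_{i-1},y_i,y_{i+1}$'', but the automorphism argument is shorter and makes transparent why the construction works.
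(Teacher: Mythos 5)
Your proof is correct and uses essentially the same construction as the paper: the configuration you take from Theorem~\ref{thm:minblocking} is exactly the triangle decomposition of the Cayley graph of $\mathbb{Z}_3\times\mathbb{Z}_s$ built in the paper's proof, and your $\Gamma$ (with $a_i$ adjacent to $b_{i-1},b_i,b_{i+1}$) is the same graph. The only presentational difference is that you obtain the isomorphism of the three colour-class-deleted graphs from the automorphism $a_i\mapsto b_i\mapsto c_i\mapsto a_i$, whereas the paper joins the third class ``exactly as in $\Gamma$'' so that all three induced bipartite graphs visibly have the same adjacency description.
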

\begin{proof}
We begin by defining a suitable cubic bipartite graph $\Gamma$. Let the vertex set of $\Gamma$ consist of $\{a_0,a_1,\ldots,a_{s-1}\}\cup\{b_0,b_1,\ldots,b_{s-1}\}$. There is an edge from $a_i$ to $b_j$ if and only if $i-j\in\{-1,0,1\}$, where of course the arithmetic is modulo $s$. Now we extend $\Gamma$ to a 6-regular graph $\Gamma'$, and colour the edges in a particular way. To create $\Gamma'$, create a new vertex set $\{c_0,c_1,\ldots,c_{s-1}\}$ and join edges $c_i$ to $a_j$ and $c_i$ to $b_j$ exactly as in $\Gamma$. A triangle decomposition in $\Gamma'$ can be defined as follows. For each $i=0,1,\ldots,s-1$, colour the edges in $\Gamma'$ according to the following rules:
\begin{itemize}[itemsep=0pt]
	\item Edges from $a_i$ to $b_{i-i}$, $b_i$ to $c_i$ and $c_i$ to $a_{i+1}$ are coloured red.
	\item Edges from $a_i$ to $b_i$, $b_i$ to $c_{i+1}$ and $c_i$ to $a_{i-1}$ are coloured green.
	\item Edges from $a_i$ to $b_{i+1}$, $b_i$ to $c_{i-1}$ and $c_i$ to $a_i$ are coloured blue.
\end{itemize}
Then the monochromatic triangles in the above edge colouring form a triangle decomposition of $\Gamma'$. The configuration $\mathcal{X}$ represented by this decomposition is strongly 3-chromatic (since $\Gamma'$ is tripartite) and deleting any of the three sets in the tripartition leaves a graph isomorphic to $\Gamma$.
\end{proof}
Note that the symmetric configuration constructed in the above theorem is resolvable, the sets of monochromatic triangles of the three colours forming the resolution classes. The graph $\Gamma'$ is a Cayley graph of the group $\mathbb{Z}_3\times\mathbb{Z}_s$.

We next turn our attention to the case $\chi_s=4$. It is easy to see that a strongly 4-chromatic configuration is weakly 2-chromatic; if we strongly colour the configuration with colours 1,2,3,4 then we can colour the points in colour classes 1 and 2 blue, and the remainder red. Then no block is monochromatic. 

In Table~\ref{tab:chis} we give computer calculations of the strong chromatic numbers of all connected configurations with $v\leq 15$; the numerical evidence is that the case $\chi_s=4$ seems to be the most common. Indeed, our next result shows that we can construct a symmetric configuration with $\chi_s=4$ for all $v\geq 8$
.
\begin{table}[h]\centering
	\begin{tabular}{c*{6}{p{1.5cm}}}
		\hline
		$v$ & Total & $\chi_s=3$ & $\chi_s=4$ & $\chi_s=5$ & $\chi_s=6$ & $\chi_s=7$ \\
		\hline
		7 & 1 & 0 & 0 & 0 &  0 & 1 \\
		8 & 1 & 0 & 1 & 0 &  0 & 0 \\
		9 & 3 & 1 & 1 & 1 & 0  & 0 \\
		10 & 10 & 0 & 3 & 7 & 0  & 0 \\
		11 & 31 & 0 & 21 & 9 & 1  & 0 \\
		12 & 229 & 4 & 161 & 64 & 0  & 0 \\
		13 & 2,036 & 0 & 1,451 & 584 & 1  & 0 \\
		14 & 21,398 & 0 & 17,342 & 4,053 & 3 &  0 \\
		15 & 245,341 & 251 & 234,139 & 10,938 & 13 &  0 \\
		\hline
	\end{tabular}
	\caption{Strong chromatic numbers of connected configurations $v_3$}
	\label{tab:chis}
\end{table}

\begin{theorem}\label{thm:chis4}
There exists a strongly 4-chromatic configuration $v_3$ for all $v\geq 8$.
\end{theorem}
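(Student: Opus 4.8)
The plan is to prove the statement by induction on $v$, taking the configurations already tabulated in Table~\ref{tab:chis} as base cases for $8\le v\le 15$, and using for the inductive step a merging construction — in fact the same block-swap surgery used in the proof of Theorem~\ref{thm:nearmin}. The one fixed ingredient I would single out is the $8_3$ configuration $\mathcal A$ with blocks $012,034,056,135,147,246,257,367$: by Table~\ref{tab:chis} it has $\chi_s=4$, and a direct check shows that the four points $2,4,6,7$ are pairwise collinear (via the blocks $246$, $257$, $147$ and $367$), so they span a $K_4$ in the associated graph of $\mathcal A$; crucially, none of those four blocks is $\{0,1,2\}$.

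For the inductive step, given any symmetric configuration $\mathcal B$ with $\chi_s=4$ on a point set disjoint from that of $\mathcal A$, I would choose a block $\{p,q,r\}$ of $\mathcal B$, delete the blocks $\{0,1,2\}$ of $\mathcal A$ and $\{p,q,r\}$ of $\mathcal B$, and add the two blocks $\{p,1,2\}$ and $\{0,q,r\}$. First I would verify the routine points: the result $\mathcal C$ is a connected symmetric configuration on $8+|\mathcal B|$ points (each point still lies in exactly three blocks, no pair of points lies in two blocks since the pairs $\{1,2\}$ and $\{q,r\}$ each lose their old block and the remaining new pairs run between the two otherwise-disjoint halves, and the two new blocks tie the halves together). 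Then I claim $\chi_s(\mathcal C)=4$. For the upper bound, take a strong $4$-colouring $f_A$ of $\mathcal A$ and a strong $4$-colouring $f_B$ of $\mathcal B$, pick a permutation $\rho$ of the four colours with $\rho(f_B(p))=f_A(0)$, and colour the points of $\mathcal A$ by $f_A$ and the points of $\mathcal B$ by $\rho\circ f_B$; blocks inherited from $\mathcal A$ or $\mathcal B$ remain rainbow, the block $\{p,1,2\}$ receives the colours $f_A(0),f_A(1),f_A(2)$, which are distinct because $\{0,1,2\}$ was a block of $\mathcal A$, and $\{0,q,r\}$ receives $\rho$ applied to the three distinct colours $f_B(p),f_B(q),f_B(r)$. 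For the lower bound, the only edges of the associated graph of $\mathcal A$ destroyed by the surgery are $\{0,1\}$ and $\{0,2\}$, so the $K_4$ on $\{2,4,6,7\}$ survives in the associated graph of $\mathcal C$, giving $\chi_s(\mathcal C)\ge 4$.

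It then remains to run the bookkeeping. Table~\ref{tab:chis} gives a configuration with $\chi_s=4$ for each $v$ with $8\le v\le 15$. If such configurations exist for all $v$ in $[8,V]$ for some $V\ge 15$, then letting $\mathcal B$ range over them in the construction above produces such configurations for all $v$ in $[16,V+8]$; since $V\ge 15$, the union of the two ranges is $[8,V+8]$, and iterating yields a strongly $4$-chromatic configuration $v_3$ for every $v\ge 8$.

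I expect the main (and essentially the only) obstacle to be the verification that the merge preserves $\chi_s=4$: the upper bound hinges on the fact that the colour palette of $\mathcal B$ can always be permuted so that the pivot point $p$ ends up with the colour $f_A(0)$, and the lower bound hinges on exhibiting a $K_4$ in $\mathcal A$ that avoids the two edges of the deleted block. Both are easy once set up correctly, but they are the points requiring care; I would also double-check that the surgery creates no new pair lying in two blocks, which cannot happen here since the two added blocks only introduce pairs running between the two disjoint halves.
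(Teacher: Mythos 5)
Your proposal is correct: I checked the degree and linearity conditions for the spliced configuration, the existence of the colour permutation $\rho$ (so both new blocks are rainbow), the fact that $\{2,4,6,7\}$ is indeed a clique of the associated graph of the $8_3$ configuration carried by the blocks $246$, $257$, $147$, $367$, none of which is touched by the surgery, and the bookkeeping from the base cases $8\le v\le 15$ of Table~\ref{tab:chis}. However, it is a genuinely different route from the paper's. The paper argues by direct construction: for each residue of $v$ modulo $3$ it takes the tripartite configuration on $3s$ points from Theorem~\ref{thm:minblocking} and applies Martinetti's extension operation once, twice or three times with specially chosen block replacements; the strong $4$-colouring is asserted, the lower bound $\chi_s\ge 4$ is automatic when $3\nmid v$ (a strong $3$-colouring forces $v\equiv 0\pmod 3$), and for $3\mid v$ it is settled by a short case analysis on the three blocks through the added point $\infty_0$; only the base cases $8\le v\le 12$ of Table~\ref{tab:chis} are needed. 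Your inductive gluing — the block swap of Theorem~\ref{thm:nearmin}(a) (in the spirit of Theorem~\ref{thm:stitch2}) anchored at the $8_3$ configuration — handles all residues uniformly, replaces the paper's ad hoc non-$3$-colourability argument by the cleaner observation that a $K_4$ of the associated graph survives the surgery, and makes the upper bound explicit via the palette permutation; the trade-offs are that you lean on computed examples up to $v=15$ rather than $v=12$, and your construction only yields configurations whose associated graph contains a $K_4$, whereas the paper's gives a single explicit family per congruence class.
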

\begin{proof}
The proof is similar to that of Theorem~\ref{thm:minblocking}. We again use Martinetti's extension operation, though the replacement of blocks is different from that done in Theorem~\ref{thm:minblocking}. First observe from Table~\ref{tab:chis} that a strongly 4-chromatic configuration $v_3$ exists for $8\leq v\leq 12$.

Let $v=3s$ where $s\geq 4$, and let $V=\{a_i,b_i,c_i:0\leq i\leq s-1\}$. Let the blocks of the symmetric configuration $v_3$ be the sets $\{a_i,b_i,c_{i+1}\}$, $\{a_i,b_{i+1},c_i\}$ and $\{a_{i+1},b_i,c_i\}$, $0\leq i\leq s-1$.

Now suppose that $v\equiv 1\!\pmod{3}$, $v\geq 13$. Construct a configuration $(v-1)_3$ as above. Introduce a new point $\infty_0$ and use the extension operation, replacing the blocks $\{a_0,b_0,c_1\}$ and $\{a_1,b_1,c_2\}$ by blocks $\{\infty_0,b_0,c_2\}$, $\{\infty_0,a_0,c_1\}$ and $\{\infty_0,a_1,b_1\}$.

Next suppose that $v\equiv 2\!\pmod{3}$, $v\geq 14$. Construct a configuration $(v-1)_3$ as above. Introduce a new point $\infty_1$ and again use the extension operation, replacing the blocks $\{a_0,b_1,c_0\}$ and $\{a_1,b_2,c_1\}$ by blocks $\{\infty_1,a_0,b_2\}$, $\{\infty_1,b_1,c_0\}$ and $\{\infty_1,a_1,c_1\}$.

Finally, suppose that $v\equiv 0\!\pmod{3}$, $v\geq 15$. Construct a configuration $(v-1)_3$ as above. Introduce a new point $\infty_2$ and again use the extension operation, replacing the blocks $\{a_1,b_0,c_0\}$ and $\{a_2,b_1,c_1\}$ by blocks $\{\infty_2,a_2,c_0\}$, $\{\infty_2,a_1,b_0\}$ and $\{\infty_2,b_1,c_1\}$. 

In all cases it is clear that the symmetric configurations so constructed have a strong colouring with 4 colours and therefore in the first two cases are strongly 4-chromatic. It remains to prove that in the case where $v\equiv 0\!\pmod 3$ it is not 3-chromatic. Suppose that it is and that in the block $\{\infty_0,b_0,c_2\}$, $\infty_0$ receives colour 1, $b_0$ receives colour 2 and $c_0$ receives colour 3. Then in the block $\{\infty_0,a_0,c_1\}$, $a_0$ and $c_1$ receive colours 2 and 3 in some order and likewise in the block $\{\infty_0,a_1,b_1\}$, $a_1$ and $b_1$ receive colours 2 and 3 in some order, giving four possibilities in all. However in all cases either $a_1$ and $c_1$ or $b_1$ and $c_1$ receive the same colour, giving a contradiction.
\end{proof}

Before considering the next case $\chi_s=5$, we state and prove the following theorem which gives the strong chromatic number of certain cyclic configurations.

\newpage
\begin{theorem}\label{thm:chis45}
Let $v\geq 7$ and let $C_v$ be the cyclic configuration on $v$ points generated by the block $\{0,1,3\}$ under the mapping $i\mapsto i+1\!\pmod{v}$. Then
\[\chi_s(C_v)=
\begin{cases}
7 & \text{if } v=7;\\
6 & \text{if } v=11;\\
5 & \text{if } v\equiv 1,2,3\!\pmod{4},v\notin \{7,11\};\\
4 & \text{if } v\equiv 0\!\pmod{4}.\\
\end{cases}
\]
\end{theorem}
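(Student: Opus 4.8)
The plan is to reduce the whole statement to the computation of the chromatic number of a single circulant graph. Recall from the opening of this section that $\chi_s(C_v)$ equals the chromatic number of the associated graph of $C_v$. Since the blocks of $C_v$ are exactly the triples $\{i,i+1,i+3\}$ with $i\in\Z_v$, two points $x,y$ lie in a common block precisely when $x-y\in\{\pm1,\pm2,\pm3\}\pmod v$; hence the associated graph is the circulant $\Gamma_v=\Cay(\Z_v,\{\pm1,\pm2,\pm3\})$, that is, the cube $C_v^{3}$ of the $v$-cycle. For $v=7$ this connection set is all of $\Z_7\setminus\{0\}$, so $\Gamma_7=K_7$ and $\chi_s(C_v)=7$; this also agrees with the remark at the start of Section~\ref{sec:strong}, since $C_7$ is the Fano plane. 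So for the rest of the argument we take $v\ge 8$ and must show $\chi(\Gamma_v)$ equals $4$, $6$ or $5$ in the stated cases.

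For the lower bounds I would first pin down the independence number of $\Gamma_v$. A subset $S\subseteq\Z_v$ is independent in $\Gamma_v$ iff its elements are pairwise at cyclic distance at least $4$; writing $S$ in cyclic order, its $|S|$ consecutive gaps sum to $v$ and each is at least $4$, so $|S|\le\lfloor v/4\rfloor$, and the set $\{0,4,8,\dots,4(\lfloor v/4\rfloor-1)\}$ attains this bound (its wrap-around gap is $v-4\lfloor v/4\rfloor+4\ge 4$). Hence $\alpha(\Gamma_v)=\lfloor v/4\rfloor$ and therefore $\chi(\Gamma_v)\ge\lceil v/\lfloor v/4\rfloor\rceil$. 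Evaluating this: when $v\not\equiv 0\pmod 4$ we have $v>4\lfloor v/4\rfloor$, so $\chi\ge 5$; when $v=11$ we have $\alpha=2$, so $\chi\ge\lceil 11/2\rceil=6$; and in all cases $\chi\ge\omega(\Gamma_v)=4$ since any four consecutive vertices form a clique. These match the claimed values exactly, so it remains to produce colourings of those sizes.

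For the upper bounds I would view a proper colouring of $\Gamma_v$ as a cyclic word of length $v$ over the colour set in which every four consecutive letters are distinct. When $4\mid v$ the periodic word $(0123)^{v/4}$ works, giving $\chi=4$. For $v=11$ the six colour classes $\{0,5\},\{1,6\},\{2,7\},\{3,8\},\{4,9\},\{10\}$ (each pair at cyclic distance $5$, hence independent) give a $6$-colouring, so $\chi=6$. For the remaining values write $v=4q+r$ with $r\in\{1,2,3\}$; I would take the cyclic words $(0123)^{q-1}(01234)$ when $r=1$ (valid for $q\ge 2$, i.e.\ $v\ge 9$), $(0123)^{q-2}(01234)^2$ when $r=2$ ($q\ge 2$, i.e.\ $v\ge 10$), and $(0123)^{q-3}(01234)^3$ when $r=3$ ($q\ge 3$, i.e.\ $v\ge 15$). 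In each case one checks that the length-$4$ windows straddling a seam between consecutive blocks are rainbow: the $\ldots3\mid0\ldots$ seam gives windows $(1,2,3,0),(2,3,0,1),(3,0,1,2)$, the $\ldots4\mid0\ldots$ seam gives $(2,3,4,0),(3,4,0,1),(4,0,1,2)$, and the wrap-around seam always has the form $\ldots4\mid0\ldots$ as well. Together these three families cover precisely $\{\,v\ge 9:\ v\not\equiv0\pmod4,\ v\ne 11\,\}$, so every such $v$ is $5$-colourable; combined with the lower bound $\chi\ge5$ this finishes those cases.

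The only genuinely substantive steps are the independence-number computation (which drives all four lower bounds uniformly) and the separate, easy-by-hand treatment of $v=7$ and $v=11$, where the generic construction and the generic bound do not meet the true value. The rest is routine bookkeeping: verifying the seam windows for the three $5$-colouring families and confirming that the ranges $q\ge2,\ q\ge2,\ q\ge3$ for $r=1,2,3$ cover exactly the intended set of orders (the smallest cases $v=9,10,15$ being the ones worth spelling out explicitly). I do not anticipate any real obstacle beyond this careful case analysis.
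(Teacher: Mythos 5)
Your proposal is correct, and it reaches the result by a partially different route from the paper. The upper bounds are essentially the paper's: your cyclic words $(0123)^a(01234)^b$ are exactly the colouring $c(i)=i\bmod 4$ for $i<4a$, $c(i)=(i-4a)\bmod 5$ otherwise, used in Lemma~\ref{lem:chis} (your three families are the cases $b=1,2,3$ of the decomposition $v=4a+5b$). Where you genuinely diverge is in the lower bounds and in the treatment of $v=11$. The paper gets $\chi_s\geq 4$ from the specific blocks $\{0,1,3\},\{1,2,4\},\{2,3,5\},\{v-1,0,2\}$, gets $\chi_s\geq 5$ for $v\not\equiv 0\pmod 4$ by arguing that a strong $4$-colouring forces the periodic pattern $0123\,0123\ldots$ (four consecutive points form a clique, so the colouring has period $4$), and settles $v=11$ by computer testing. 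You instead compute the independence number of the associated circulant graph $\Gamma_v=C_v^3$, namely $\alpha(\Gamma_v)=\lfloor v/4\rfloor$ via the gap argument, and derive all three lower bounds uniformly from $\chi\geq\lceil v/\lfloor v/4\rfloor\rceil$ (giving $\geq 5$ when $4\nmid v$ and $\geq 6$ at $v=11$), then certify $\chi_s(C_{11})\leq 6$ with the explicit colour classes $\{0,5\},\ldots,\{4,9\},\{10\}$. Your version is thus fully self-contained (no computer check and no appeal to Brooks' theorem is needed for this theorem), at the cost of proving the independence-number lemma; the paper's version is shorter at the point of use and leans on Lemma~\ref{lem:chis}, which it reuses, plus a quick computation for the single exceptional order. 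Your verification details (seam windows, the length counts $4(q-b)+5b=v$, and the ranges $q\geq 2,2,3$ covering exactly $v\geq 9$, $4\nmid v$, $v\neq 11$) all check out.
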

Proof of this theorem is facilitated by the following lemma.
\begin{lemma}\label{lem:chis}
Let $v\geq 7$ and let $C_v$ be the cyclic configuration on $v$ points generated by the block $\{0,1,3\}$. If $v$ can be expressed in the form $4a+5b$ where $a,b$ are non-negative integers, then $\chi_s(C_v)\leq 5$.
\end{lemma}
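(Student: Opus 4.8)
The plan is to first translate the strong-colouring condition for $C_v$ into a purely combinatorial statement about colour patterns on $\Z_v$, and then to build such a pattern directly from a decomposition $v=4a+5b$.

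\emph{Reformulation.} A strong colouring of $C_v$ is a map $\phi:\Z_v\to\{1,2,3,4,5\}$ for which, for every $m$, the three points $m$, $m+1$, $m+3$ of the block $\{m,m+1,m+3\}$ receive distinct colours. Letting $m$ range over $\Z_v$, the requirement $\phi(m)\ne\phi(m+1)$ gives $\phi(i)\ne\phi(i+1)$ for all $i$; the requirement $\phi(m+1)\ne\phi(m+3)$ gives $\phi(i)\ne\phi(i+2)$ for all $i$; and $\phi(m)\ne\phi(m+3)$ gives $\phi(i)\ne\phi(i+3)$ for all $i$. Conversely these three families of inequalities clearly imply that every block is rainbow. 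Hence $\chi_s(C_v)\le 5$ is equivalent to being able to colour $\Z_v$ with $5$ colours so that any two points at cyclic distance $1$, $2$, or $3$ receive different colours; equivalently, so that every four cyclically consecutive points receive four distinct colours.

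\emph{Construction.} Given $v=4a+5b$ with $a,b\ge 0$ (and $v\ge 7$, so at least one of $a,b$ is positive), I would form a cyclic word of length $v$ over $\{1,2,3,4,5\}$ by concatenating, in any order around the cycle, $a$ copies of the pattern $A=(1,2,3,4)$ and $b$ copies of the pattern $B=(1,2,3,4,5)$. Then I would verify that every window of four cyclically consecutive entries consists of four distinct symbols. A window lying inside a single pattern is either $(1,2,3,4)$ or $(2,3,4,5)$, hence rainbow; since every pattern has length at least $4$, no window can meet three patterns. A window straddling a junction uses the last $k$ entries of one pattern and the first $4-k$ entries of the next, for some $k\in\{1,2,3\}$. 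Since $A$ ends in $\ldots,2,3,4$, $B$ ends in $\ldots,3,4,5$, and every pattern begins $1,2,3,\ldots$, such a window is one of $(2,3,4,1)$, $(3,4,1,2)$, $(4,1,2,3)$ (after an $A$) or $(3,4,5,1)$, $(4,5,1,2)$, $(5,1,2,3)$ (after a $B$), all rainbow. By the reformulation this colouring is a strong $5$-colouring of $C_v$, so $\chi_s(C_v)\le 5$.

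\emph{Expected difficulty.} There is no real obstacle here. The reformulation is routine, and the check at the junctions is a short finite case analysis; the only points requiring mild care are enumerating all straddling windows correctly and noting that the degenerate possibilities $a=0$ or $b=0$ (where every junction is of type $A$--$A$ or $B$--$B$) are harmlessly covered by the same argument. One might also remark that this argument in fact shows $\chi_s(C_v)=\chi(C_v^{3})$, the chromatic number of the cube of the $v$-cycle, although only the upper bound is needed here.
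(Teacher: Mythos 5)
Your proof is correct and follows essentially the same approach as the paper: decompose the cycle $\Z_v$ into $a$ segments coloured $1,2,3,4$ and $b$ segments coloured $1,2,3,4,5$, which is exactly the paper's colouring $0123\ldots0123\,01234\ldots01234$ (the paper fixes one ordering of the segments and leaves the verification as ``easy to see,'' whereas you allow any ordering and spell out the window check). No gap.
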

\begin{proof}
We consider the points of $C_v$ to be elements of the cyclic group $\Z_v$. The colours of the points will be taken from the set $\{0,1,2,3,4\}$. Each point $i$ is assigned colour $c(i)$ as follows.
\[
c(i)=
\begin{cases}
i\!\!\mod 4 & \text{if }i<4a;\\
(i-4a)\!\!\mod 5 & \text{if }i\geq 4a.
\end{cases}
\]
So listing the elements of $\Z_v$ from $0$ to $v-1$ in order, the assignment of colours looks like:
\[0123\ 0123\ldots 0123\ 01234\ 01234\ldots 01234.\]
It is easy to see that if $b>0$ this represents a strong 5-colouring of $C_v$; and if $v$ is divisible by 4 we can write $v=4a$ and it is a 4-colouring.
\end{proof}
\begin{proof}[Proof of Theorem~\ref{thm:chis45}]
The block $\{0,1,3\}$ shows that 0, 1 and 3 must be assigned different colours; then the blocks $\{1,2,4\}$, $\{2,3,5\}$ and $\{v-1,0,2\}$ show that 2 must be assigned a fourth colour. So for any $v\geq 7$, $\chi_s(C_v)\geq 4$.

If $v$ can be written in the form $4a+5b$, then Lemma~\ref{lem:chis} applies and so $\chi_s(C_v)$ will equal 4 if $v\equiv 0\!\pmod{4}$. If $v\not\equiv 0\!\pmod{4}$, then by the paragraph above and the proof of Lemma~\ref{lem:chis}, the assignment of colours in a strong 4-colouring would have to be $0123\ 0123\ldots 0123$ which is impossible because the points cannot be split into groups of 4. So $\chi_s(C_v)=5$.

The only values of $v\geq 7$ which cannot be written in the form $4a+5b$ are 7 and 11. If $v=7$ then the associated graph of $C_v$ is the complete graph $K_7$ and this has chromatic number 7. If $v=11$ then Lemma~\ref{lem:chis} cannot be applied, and computer testing shows that $\chi_s(C_{11})=6$. In fact as Table~\ref{tab:chis} shows, this is the unique 6-chromatic configuration $11_3$.
\end{proof}

The case $\chi_s=5$ is interesting. Theorem~\ref{thm:chis45} shows that symmetric configurations $v_3$ with $\chi_s=5$ exist for all $v\equiv 1,2,3\!\pmod{4}$, $v\notin\{7,11\}$ and Table~\ref{tab:chis} shows that such a configuration also exists for $v=11$ but not $v=7$. It remains to determine existence for $v\equiv 0\!\pmod{4}$, which it is more appropriate for us to do later in Theorem~\ref{thm:chis5}.

All examples of strongly 5-chromatic configurations $v_3$ with $v\leq 15$ have $\chi_w=2$, and indeed all other examples we have seen have $\chi_w=2$ (in other words, the configuration contains a blocking set). However, we have been unable to find a proof of this, and so the existence of a symmetric configuration with $\chi_s=5$ and $\chi_w=3$ remains an open question. As a partial result in this direction, we can show that all configurations which are ``almost'' strongly 4-colourable have weak chromatic number 2.

\begin{theorem}\label{thm:4col}
Suppose that we have a strongly 5-chromatic configuration $v_3$ in which all but at most 2 points can be coloured using 4 colours. Then the weak chromatic number of the configuration is 2.
\end{theorem}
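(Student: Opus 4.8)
The plan is to work directly with a strong 5-colouring together with the partial 4-colouring, and to show that the two ``problematic'' points can always be absorbed. Suppose the configuration has a strong 5-colouring with colour classes $C_1,\ldots,C_5$, and that there is a set $W$ with $|W|\le 2$ such that the points of $V\setminus W$ admit a strong 4-colouring with classes $D_1,D_2,D_3,D_4$. The idea is that the $D_i$-colouring already gives, on $V\setminus W$, a weak $2$-colouring (recolour $D_1\cup D_2$ red and $D_3\cup D_4$ blue), since no block contained in $V\setminus W$ is monochromatic under a rainbow $4$-colouring. So the only blocks in danger of being monochromatic are those meeting $W$.

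First I would reduce to the case $|W|=2$, say $W=\{x,y\}$; the case $|W|\le 1$ is easier and follows by the same argument (or by noting a single point lies in only three blocks, each of which has two points in $V\setminus W$ that we are free to colour). With $W=\{x,y\}$, each of $x$ and $y$ lies in exactly three blocks, so at most six blocks meet $W$. I would colour $V\setminus W$ red/blue using the $D_i$-classes as above; then every block not meeting $W$ is bichromatic. It remains to choose colours for $x$ and $y$ (and possibly to locally recolour a bounded number of points of $V\setminus W$) so that the $\le 6$ blocks through $W$ are not monochromatic. A block through exactly one of $x,y$ contains two points of $V\setminus W$; a block through both $x$ and $y$ (there is at most one, since $x,y$ lie together in at most one block) contains one further point. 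So the real constraints are mild: for a block $\{x,p,q\}$ with $p,q\in V\setminus W$ we only need $x$, $p$, $q$ not all the same colour, and similarly for $y$; for the possible block $\{x,y,z\}$ we need not all of $x,y,z$ equal.

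The key step is the colour-assignment argument for $x$ and $y$. Here is where I would use the strong $5$-colouring $C_1,\ldots,C_5$: because it is a \emph{rainbow} colouring, the three blocks through $x$ use, among the six non-$x$ endpoints, colours that are forced to be quite spread out --- in particular the three blocks through $x$ together with the structure of $C_v$-type incidences force at least two distinct colours among $\{x\}\cup(\text{neighbours of }x\text{ in the associated graph})$ in a controlled way. More simply: assign $x$ a colour (red or blue) that is \emph{not} the common colour of any of its three blocks-in-danger; such a colour exists unless all three blocks through $x$ have both their $V\setminus W$-points the same colour, and moreover all three agree. If that bad event happens for $x$, I recolour one of the (at most six) points of $V\setminus W$ adjacent to $x$: changing such a point's red/blue value can only endanger the at most three blocks through it, and since that point lies in at most one block through $x$, a short case analysis (using that its other two blocks are rainbow-$4$-coloured, hence still bichromatic after the flip as long as we flip consistently with one of the $D_i$-pairings) shows a safe flip exists. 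The same argument is then run for $y$, noting the interaction between $x$ and $y$ is confined to the single possible block $\{x,y,z\}$, which is handled by ensuring $x$ and $y$ are not both forced to the same colour --- and if they are, flip $z$.

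The main obstacle I anticipate is precisely controlling the interaction between the two bad points and making the local recolourings provably non-conflicting: a flip made to fix a block through $x$ could in principle re-monochromatise a block through $y$, or a block not meeting $W$ at all. I would manage this by keeping the recolouring inside a single $D_i$-class structure --- i.e. only ever swap a point between the red side $D_1\cup D_2$ and the blue side $D_3\cup D_4$ in a way that respects that every block off $W$ still has points in two of the $D_i$'s --- which guarantees those blocks stay bichromatic automatically, so the only bookkeeping is over the $\le 7$ blocks meeting $W$, a finite check. If the clean ``flip inside $D_i$-pairing'' idea does not by itself suffice in every configuration, the fallback is an exhaustive case analysis over the (bounded) local configuration around $\{x,y\}$ --- there are only finitely many isomorphism types of the subconfiguration on $W$ and the points in blocks through $W$ --- completing the proof.
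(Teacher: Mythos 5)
Your setup coincides with the paper's for the two-point case: colour the $4$-colourable points by splitting the four classes into a red pair and a blue pair, observe that every block avoiding $W$ is then automatically bichromatic, and reduce everything to the few blocks through the uncoloured points. The gap is in your key step. First, your failure condition for choosing a colour for $x$ is misstated: the choice fails not when ``all three blocks through $x$ agree'', but when one block through $x$ has both coloured points red and another has both coloured points blue (if all three agree, the opposite colour works). More seriously, your repair mechanism --- flipping the red/blue value of a single already-coloured point near $x$, or of $z$ in the block $\{x,y,z\}$ --- is unsound as described. Such a point lies in two further blocks; each block off $W$ carries three distinct colours of the $4$-colouring and hence a $2{+}1$ red/blue split, so flipping a point that is the minority in one of its other blocks makes that block monochromatic. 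Whether a ``safe flip'' exists thus depends on the colours of points outside the neighbourhood of $W$, so neither your ``short case analysis'' nor the fallback over finitely many local isomorphism types around $\{x,y\}$ can decide it; the existence of a safe flip is precisely what would need proof, and keeping flips ``inside a single $D_i$-class structure'' does not address it, since the red/blue value of a point is independent of its $D_i$-class.

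What is missing is the global move you only gesture at with ``flip consistently with one of the $D_i$-pairings'': do not flip points at all, but choose which of the three pairings $12/34$, $13/24$, $14/23$ of the four colour classes defines red/blue. Every block off $W$ stays bichromatic under any pairing, and a pairing is incompatible with $x$ (leaves no admissible choice for $x$) only if both of its complementary colour-pairs occur among the at most three colour-pairs of the blocks through $x$; since two distinct pairings involve four distinct pairs, each of $x$ and $y$ rules out at most one pairing. Hence if $x$ and $y$ share no block, some pairing is compatible with both; if they share a block $\{x,y,z\}$, each has only two other blocks, leaving enough freedom to give $x$ and $y$ different colours and so secure that block as well. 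This counting over the three pairings is the heart of the paper's proof. The paper also disposes of the case of at most one uncoloured point via a separate observation --- if some $3$-subset of the five colours occurs on no block then $\chi_w=2$, and a single point coloured $5$ lies in only three blocks while six triples contain colour $5$ --- whereas your parenthetical claim that the coloured points in the blocks through a single uncoloured point are ``free to colour'' is not justified for the same reason as above.
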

\begin{proof}
Firstly, note that in any 5-colouring each of the $v$ blocks is coloured with one of the $\binom{5}{3}=10$ possible sets of 3 colours; and each of these sets must appear at least once if the weak chromatic number is 3. (If a set of 3 colours does not appear in any block, we can assign blue to these 3 and red to the other 2 to get a weak 2-colouring.)

Now suppose that we can assign 4 colours (say 1, 2, 3 and 4) to $v-1$ points so that no colour is repeated in a block. Clearly we can assign a 5th colour 5 to the remaining point, and in this 5-colouring at least 3 sets of 3 colours must fail to appear in any block, since there are 6 possible sets containing this colour but only 3 blocks containing the single vertex with this colour. Thus the configuration has weak chromatic number 2.

If we can only assign 4 colours to $v-2$ points the position is more awkward. Let the two uncoloured points be $a$ and $b$. Suppose first that $a$ and $b$ do not appear in the same block. We seek an assignment of two of the existing colours 1, 2, 3, 4 to red and the remaining two to blue, such that we can choose red or blue for $a$ and $b$ to obtain a weak 2-colouring. There are exactly three ways to do this initial red/blue assignment: 12/34, 13/24 and 14/23. We shall call an assignment \emph{compatible} with $a$ if it leaves a possible red/blue choice for $a$ such that no monochromatic block is created. Since $a$ appears in three blocks, it is easy to see that at most one of the three possible assignments is not compatible with $a$. For example, if the colours of the other points in the blocks containing $a$ are $\{1,2\}$, $\{3,4\}$ and $\{1,4\}$, then the assignment 12/34 is incompatible with $a$ but the assignments 13/24 and 14/23 are compatible. Since the same argument holds for $b$, at least one possible assignment is compatible with $a$ and $b$ and so the configuration has a weak 2-colouring.

If $a$ and $b$ do appear in the same block, then each has two other blocks in which it appears. In this case, not only is there an assignment compatible with both $a$ and $b$, but the choice of red/blue for $a$ and $b$ may be made freely. So we can choose red for $a$ and blue for $b$ and again there is a weak 2-colouring.
\end{proof}

Now we come to the case $\chi_s=6$. Table~\ref{tab:chis} shows that this is uncommon; of the 269,049 connected configurations $v_3$ with $8\leq v\leq 15$, only 18 are strongly 6-chromatic. These are given in the Appendix. Nevertheless, we are able to deduce the existence of strongly 6-chromatic configurations for almost all values of $v$ as the next result shows.

\begin{theorem}\label{thm:chis6}
There exists a strongly 6-chromatic connected configuration $v_3$ for $v=11$ and for all $v\geq 13$.
\end{theorem}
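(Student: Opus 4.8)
The plan is to combine a short list of base cases with a single gluing construction that adds arbitrarily many points while keeping the strong chromatic number equal to $6$. Recall that, as noted at the start of this section, $\chi_s\le 6$ for every symmetric configuration other than the Fano plane; so for every $v\ge 14$ it will suffice to establish the lower bound $\chi_s\ge 6$, i.e.\ the non-$5$-colourability of the associated graph. For the remaining small cases $v=11$ and $v=13$ I would simply quote Table~\ref{tab:chis}: the unique strongly $6$-chromatic $11_3$ configuration is $C_{11}$ (Theorem~\ref{thm:chis45}), and a strongly $6$-chromatic $13_3$ configuration is recorded in the Appendix. Note that $v=12$ is genuinely absent, again consistently with Table~\ref{tab:chis}.

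For $v\ge 14$ I would glue the Fano plane to a configuration on $v-7$ points by the point-swap operation already used in the proof of Theorem~\ref{thm:nearmin}. Take the Fano plane on a point set $P=\{p,q,r,s_1,s_2,s_3,s_4\}$ with $\{p,q,r\}$ one of its blocks; take a connected symmetric configuration $(v-7)_3$ (which exists for every $v-7\ge 7$, for instance a cyclic one generated by $\{0,1,3\}$), pick any of its blocks $\{x,y,z\}$, and replace the two blocks $\{p,q,r\}$ and $\{x,y,z\}$ by $\{x,q,r\}$ and $\{p,y,z\}$. Since $P$ is disjoint from the second point set, none of the pairs $\{x,q\},\{x,r\},\{p,y\},\{p,z\}$ was contained in any block beforehand, so the outcome is again a symmetric $v_3$ configuration; it is connected because the new blocks link the two parts (e.g.\ $x$ is now adjacent to $q$) and each part is connected.

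The heart of the argument is the effect of the swap on the subgraph of the associated graph induced on $P$. In the Fano plane every pair of points lies in a block, so that induced subgraph was $K_7$; the swap destroys exactly the pairs $\{p,q\}$ and $\{p,r\}$, while $\{q,r\}$ survives inside the new block $\{x,q,r\}$, and no new edge is created inside $P$. Hence the associated graph of the new configuration contains the induced subgraph $K_7-\{pq,pr\}$. A direct check gives $\chi(K_7-\{pq,pr\})=6$: the only independent sets of size at least $2$ are $\{p,q\}$ and $\{p,r\}$, and these overlap, so every proper colouring has at most one colour class of size $\ge 2$, whence $5$ colours cannot cover all $7$ vertices, while $6$ clearly suffice. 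Therefore $\chi_s\ge 6$, and with the Brooks bound $\chi_s=6$, for every $v\ge 14$; together with the base cases $v\in\{11,13\}$ this proves the theorem.

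I do not anticipate a serious obstacle; the only points needing care are the routine verifications that the swap produces a legitimate connected configuration and that the induced subgraph on the Fano points is \emph{exactly} $K_7-\{pq,pr\}$ (in particular that $\{q,r\}$ remains an edge). The one genuinely substantive observation — and the reason the Fano plane, rather than some stitched configuration, is the right seed — is that the swap deletes only two edges of the clique $K_7$ instead of the whole triangle on $\{p,q,r\}$, which is precisely what keeps the chromatic number at $6$; deleting all three edges, as the $v+v'-1$ construction of Theorem~\ref{thm:stitch2} would, leaves the $5$-chromatic graph $K_7-C_3$ and the argument collapses.
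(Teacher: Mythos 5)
Your proposal is correct and follows essentially the same route as the paper: the same base cases from Table~\ref{tab:chis}, the same block-swap gluing of a Fano plane onto an arbitrary connected $(v-7)_3$ configuration, and the Brooks-type upper bound. The only difference is cosmetic: you certify the lower bound by computing $\chi(K_7-\{pq,pr\})=6$ for the induced subgraph on the Fano points, whereas the paper phrases the identical fact as the impossibility of extending a strong 5-colouring of the glued configuration to a strong 6-colouring of the Fano plane.
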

\begin{proof}
The cases $v=11$ and $v=13$ follow from Table~\ref{tab:chis}. So let $v\geq 14$ and let $C_7$ be the cyclic configuration on 7 points generated by the block $\{0,1,3\}$ under the mapping $i\mapsto i+1\!\pmod{7}$; this is of course the unique $7_3$ configuration and is strongly 7-chromatic. Now choose any connected configuration $(v-7)_3$ and number the points from $7$ to $v-1$. By relabelling if necessary, we may assume without loss of generality that this configuration contains the block $\{7,8,9\}$. Now create a new configuration $\mathcal{X}$ with the blocks of these two configurations, but replacing the blocks $\{0,1,3\}$ and $\{7,8,9\}$ with $\{1,3,7\}$ and $\{0,8,9\}$. Suppose $\mathcal{X}$ can be strongly coloured with 5 colours. Then the colours assigned to points 1 to 6 together with a sixth colour for point 0 would give a strong 6-colouring for the original configuration $C_7$, which is impossible. Thus $\chi_s(\mathcal{X})\geq 6$ and since $\chi_s(\mathcal{X})\leq 6$ by Brooks' Theorem, $\mathcal{X}$ is a strongly 6-chromatic connected configuration on $v$ points as required.
\end{proof}

As noted above, all the blocking set free configurations of which we are aware have $\chi_s=6$. However, only the single example at $v=13$ in Table~\ref{tab:chis} has $\chi_s=6$ and $\chi_w=3$, so there are many examples with $\chi_s=6$ and $\chi_w=2$.

Finally in this section we complete the proof of the existence spectrum for $\chi_s=5$ which we earlier deferred until later. It follows the proof of Theorem~\ref{thm:chis6} but is more intricate.

\newpage
\begin{theorem}\label{thm:chis5}
There exists a strongly 5-chromatic connected configuration $v_3$ for all $v\equiv 0\!\pmod{4}$, $v\geq 12$.
\end{theorem}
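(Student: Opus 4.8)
The plan is to mimic the proof of Theorem~\ref{thm:chis6}, but to replace the Fano plane by a fixed small \emph{gadget} configuration enjoying a rigidity property, and — crucially — to merge it not with an arbitrary configuration but with one whose strong chromatic number we control. The base values $v=12$ and $v=16$ are dealt with directly: $v=12$ appears in Table~\ref{tab:chis}, while $v=16$ lies outside that table and below the range of the construction below, so it must be handled by an explicit example. Throughout the rest we assume $v\equiv 0\pmod 4$ and $v\geq 20$.

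For the gadget, fix a strongly 5-chromatic configuration $\mathcal D$ on $d$ points, with $9\leq d\leq 12$, chosen so that its associated graph $G_{\mathcal D}$ has a point $p$, lying in a block $B=\{p,q,r\}$, for which $G_{\mathcal D}-p$ is still 5-chromatic; that is, $\mathcal D$ is strongly 5-chromatic but not 5-vertex-critical at $p$. A routine finite check confirms such $\mathcal D$ exists (one need only inspect the strongly 5-chromatic configurations of order $9$, $10$ or $12$). By Theorem~\ref{thm:chis4} there is a strongly 4-chromatic configuration $\mathcal E$ on $e=v-d$ points, and here $e\geq 8$. Pick a block $B'=\{p',q',r'\}$ of $\mathcal E$ with $p'\in B'$, and form, exactly as in the proof of Theorem~\ref{thm:chis6}, the connected configuration $\mathcal X$ on $v$ points whose blocks are those of $\mathcal D$ and of $\mathcal E$ except $B$ and $B'$, together with the two new blocks $\{q,r,p'\}$ and $\{p,q',r'\}$.

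For the lower bound, suppose $\mathcal X$ had a strong 4-colouring. Every block of $\mathcal D$ other than $B$ is a block of $\mathcal X$, and the pair $qr$ is covered by $\{q,r,p'\}$, so the colouring restricted to $V(\mathcal D)$ is a proper 4-colouring of the graph $G_{\mathcal D}$ with only the two edges $pq$ and $pr$ deleted; deleting the vertex $p$ from this restriction leaves a proper 4-colouring of $G_{\mathcal D}-p$, contradicting $\chi(G_{\mathcal D}-p)=5$. Hence $\chi_s(\mathcal X)\geq 5$. For the upper bound, 5-colour $G_{\mathcal D}$ (possible, as $\chi(G_{\mathcal D})=5$) and, independently, 4-colour the associated graph of $\mathcal E$. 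The subgraph of the associated graph of $\mathcal X$ induced on $V(\mathcal D)$ is $G_{\mathcal D}$ with $pq,pr$ removed (the edge $qr$ being restored by $\{q,r,p'\}$), the induced subgraph on $V(\mathcal E)$ is correspondingly the associated graph of $\mathcal E$ with $p'q',p'r'$ removed, and the only remaining edges are the cross-edges $p'q,\,p'r,\,pq',\,pr'$. Since the colours on the $\mathcal E$-side may be permuted freely, it suffices to find a permutation $\sigma$ of the five colours with $\sigma(c(p'))\notin\{c(q),c(r)\}$ and $c(p)\notin\{\sigma(c(q')),\sigma(c(r'))\}$. As $\{c(q),c(r)\}$ and $\{c(q'),c(r')\}$ each have exactly two elements and five colours are available, a short case analysis — according to whether $c(p')$ coincides with $c(q')$, with $c(r')$, or with neither — produces such a $\sigma$, and hence a strong 5-colouring of $\mathcal X$. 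Combined with the lower bound, $\chi_s(\mathcal X)=5$.

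The difficulty here is not a single hard step but a combination of verification and bookkeeping. One must (i) confirm that a gadget $\mathcal D$ of small order with a non-critical point of the required form exists — a finite computation — and have it on hand with an explicitly named block $B$; (ii) supply the base value $v=16$, which neither the table nor the construction reaches, via a concrete strongly 5-chromatic $16_3$; and (iii) be sure the colour-permutation argument for the upper bound genuinely never fails, which is the case precisely because each of the finitely many constraints forbids at most three of the five colours for the relevant point. The point most easily overlooked is that merging against an \emph{arbitrary} configuration (as in Theorem~\ref{thm:chis6}) would not work, since a factor isomorphic to the Fano plane, or indeed any strongly $6$-chromatic factor, can force $\chi_s(\mathcal X)\geq 6$; this is why we insist on taking $\mathcal E$ strongly $4$-chromatic.
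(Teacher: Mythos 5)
Your overall strategy is sound and genuinely different from the paper's: the paper merges the strongly 6-chromatic cyclic configuration $C_{11}$ with a strongly 5-chromatic $(v-11)_3$ (available by Theorem~\ref{thm:chis45}, since $v-11\equiv 1\pmod 4$), obtains the lower bound by restricting a hypothetical 4-colouring and giving the cut point a brand-new fifth colour, and obtains the upper bound by writing down an explicit 5-colouring of the eleven gadget points; your version instead puts the rigidity into a 5-chromatic gadget with a non-critical point and pushes the upper bound through a colour-permutation argument against a strongly 4-chromatic partner from Theorem~\ref{thm:chis4}. Both the lower-bound argument (a strong 4-colouring of $\mathcal X$ restricts to a proper 4-colouring of $G_{\mathcal D}$ minus the edges $pq,pr$, hence of $G_{\mathcal D}-p$) and the permutation argument for the upper bound are correct as stated. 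However, as written the proof has two genuine holes.

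First, the value $v=16$ is part of the statement but is not proved: Table~\ref{tab:chis} stops at $v=15$, your construction cannot reach $16$ (it needs $e=v-d\geq 8$ and a gadget of order $d\geq 9$, and in fact $d=10$ as noted below, so $v\geq 18$), and you acknowledge that an explicit strongly 5-chromatic $16_3$ is required without exhibiting one. The paper supplies such an example (012 034 056 135 146 236 278 479 57a 89b 8cd 9ef ace adf bcf bde); without something of this kind your argument simply does not cover $v=16$. Second, the existence of the gadget $\mathcal D$ is asserted as a ``routine finite check'' but never carried out, and the hint that order $9$ might suffice is in fact wrong: the unique strongly 5-chromatic $9_3$ is the cyclic configuration $C_9$, whose associated graph is $K_9$ minus a $9$-cycle, and deleting \emph{any} vertex leaves $K_8$ minus a path, which is 4-chromatic. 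The claim is rescued at order $10$ by the Desargues configuration: its associated graph is the triangular graph $T(5)=L(K_5)$ with chromatic number $5$, and for every vertex $p$ the graph $T(5)-p$ is still 5-chromatic, because its complement (the Petersen graph minus a vertex) is triangle-free with maximum matching $4$, giving clique cover number $9-4=5$. So your construction can be completed, but the proposal itself neither verifies the gadget nor handles $v=16$, and both items need to be filled in for the proof to stand.
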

\begin{proof}
Examples for $v=12$ and $v=16$ are the following.

\begin{config}
\-\ 012 034 056 135 146 237 289 48a 59b 6ab 78b 79a

\-\ 012 034 056 135 146 236 278 479 57a 89b 8cd 9ef ace adf bcf bde
\end{config}

Now let $v\geq 20$ and let $C_{11}$ be the cyclic configuration on 11 points generated by the block $\{0,1,3\}$ under the mapping $i\mapsto i+1\!\pmod{11}$; from Theorem~\ref{thm:chis45} this is strongly 6-chromatic. Now choose any connected configuration $(v-11)_3$ with $\chi_s=5$ and number the points from 11 to $v-1$. From Theorem~\ref{thm:chis45} this is possible. By relabelling if necessary, we may assume without loss of generality that this configuration contains the block $\{11,12,13\}$ and that in the strong 5-colouring, these points receive colours red, yellow and blue respectively. Now create a new configuration $\mathcal{X}$ with the blocks of these two configurations but replacing the blocks $\{0,1,3\}$ and $\{11,12,13\}$ with $\{1,3,11\}$ and $\{0,12,13\}$. Suppose $\mathcal{X}$ can be strongly coloured with 4 colours. Then the colours assigned to points 1 to 10 together with a fifth colour assigned to point 0 would give a strong 5-colouring of the original configuration $C_{11}$, which is impossible. Thus $\chi_s(\mathcal{X})\geq 5$, and since $\chi_s(\mathcal{X})\leq 6$ by Brooks' Theorem, $\mathcal{X}$ is either strongly 5-chromatic or 6-chromatic. It remains to show that it is the former by exhibiting a colouring.

Colour the blocks of the $(v-11)_3$ configuration without the block $\{11,12,13\}$ with five colours red, yellow, blue, green and white, respecting that colours have already been assigned to points 11, 12 and 13. Colour the remaining points as follows: 4 and 8 red; 2 and 9 yellow; 3 and 7 blue; 0, 1 and 5 green; 6 and 10 white.
\end{proof}
\section{Open questions}\label{sec:open}
We gather here some of the interesting open questions arising from this research. The first of these relates to symmetric configurations $25_3$ without a blocking set. We now have enumerations of all symmetric configurations $v_3$ for $7\leq v\leq 20$ and all 3-connected symmetric configurations without a blocking set for $7\leq v\leq 43$. There are unique configurations $21_3$ and $22_3$ without a blocking set, both necessarily 2-connected, and a 2-connected configuration $25_3$ without a blocking set is known. The question remains whether this is unique.

The second problem is to extend the work on the sizes of minimal blocking sets, possibly along the lines of Theorems~\ref{thm:nearmin} and~\ref{thm:blocking013}. In particular it would be interesting to find constructions of symmetric configurations $v_3$ whose minimal blocking set has maximum cardinality, i.e. $(v-1)/2$ if $v$ is odd and $v/2$ if $v$ is even. The admittedly limited evidence from Table~\ref{tab:bssizes} suggests that such configurations exist except for $v=7$ (where there is no blocking set) and $v=14$, though amongst the set of all symmetric configurations they may be relatively rare. However, given the long history of blocking set free symmetric configurations, finding those with only minimal blocking sets of maximum cardinality may also be quite challenging.

The third problem concerns the relationship between the strong and the weak chromatic numbers. We have observed that if $\chi_s=3$ or $4$ then $\chi_w=2$ and that there are configurations with $(\chi_s,\chi_w)$ equal to both $(6,2)$ and $(6,3)$. However all of the known systems with $\chi_s=5$ have $\chi_w=2$. So we ask does there exist a symmetric configuration $v_3$ with strong chromatic number 5 and weak chromatic number 3? Equivalently, does every blocking set free configuration have strong chromatic number 6?

Finally, as we observed, a given strong 3-colouring of a strongly 3-chromatic configuration gives rise to three cubic bipartite graphs by deleting each of the colour classes from the associated graph of the configuration. Denote these graphs by $\Gamma_1$, $\Gamma_2$ and $\Gamma_3$. In Theorem~\ref{thm:delgraph} we proved that one of these graphs, say $\Gamma_1$, can be any cubic bipartite graph. Now suppose that $\Gamma_1$, $\Gamma_2$ and $\Gamma_3$ are all specified. Does there exist a symmetric configuration $v_3$ whose three cubic bipartite graphs constructed as above are isomorphic to $\Gamma_1$, $\Gamma_2$ and $\Gamma_3$? If not, what are the constraints on these three graphs for this to be possible? The case where $\Gamma_1$, $\Gamma_2$ and $\Gamma_3$ are isomorphic would be of particular interest.

There are of course other problems on symmetric configurations and we hope that this paper will encourage colleagues to work on these.

\section*{Appendix}
The 27 configurations with $v\leq 14$ with a minimal blocking set of size $\lfloor\frac{v}{2}\rfloor$:

\begin{config}
\-\ 012 034 056 135 147 246 257 367

\-\ 012 034 056 135 147 248 267 368 578

\-\ 012 034 056 135 178 247 268 379 469 589

\-\ 012 034 056 137 158 247 268 359 469 789

\-\ 012 034 056 135 146 278 29a 379 47a 589 68a

\-\ 012 034 056 135 147 248 269 37a 59a 68a 789

\-\ 012 034 056 135 147 248 279 36a 59a 689 78a

\-\ 012 034 056 135 147 248 29a 379 58a 67a 689

\-\ 012 034 056 135 147 268 279 389 49a 58a 67a

\-\ 012 034 056 135 178 246 279 37a 49a 58a 689

\-\ 012 034 056 135 148 257 26b 389 49a 67a 79b 8ab

\-\ 012 034 056 135 179 246 278 39a 48a 59b 68b 7ab

\-\ 012 034 056 135 148 257 26c 389 49a 67b 7ac 8ab 9bc

\-\ 012 034 056 135 149 25c 2ab 37b 478 689 6ac 79a 8bc

\-\ 012 034 056 135 167 247 2bc 389 49b 58c 68a 7ab 9ac

\-\ 012 034 056 135 178 239 247 49a 58c 68b 6ac 7ab 9bc

\-\ 012 034 056 135 178 247 289 37b 4ab 59c 69a 6bc 8ac

\-\ 012 034 056 135 178 247 289 3bc 45c 69b 6ac 79a 8ab

\-\ 012 034 056 135 179 247 269 3ac 48a 58c 6bc 78b 9ab

\-\ 012 034 056 135 179 249 26c 38a 4ac 578 68b 7ab 9bc

\-\ 012 034 056 135 179 268 29c 3ab 47a 49b 578 6bc 8ac

\-\ 012 034 056 135 179 26a 289 3ab 478 49b 57c 6bc 8ac

\-\ 012 034 056 135 179 26b 29c 3ab 47a 4bc 578 68c 89a

\-\ 012 034 056 135 17c 24b 26a 38c 469 578 79b 8ab 9ac

\-\ 012 034 056 137 14c 25b 26a 359 468 78c 79a 8ab 9bc

\-\ 012 034 056 137 158 247 2ab 368 49a 59c 6bc 79b 8ac

\-\ 012 034 056 137 189 26b 29c 3ab 45c 49a 578 68a 7bc
\end{config}

The 23 blocking set free configurations $25_3$ arising from Theorem~\ref{thm:stitch2}:

\begin{config}
	\-\ 012 034 056 135 1fg 236 29c 478 4hi 5ab 6de 79b 7ac 89a 8bc dfh dgi efi\\
	\-\ ejk glo hmn jln jmo klm kno
	
	\-\ 012 034 056 135 146 29a 2bc 367 4mn 5lo 79b 7ac 89c 8de 8jk afi bgh dfh \\
	\-\ dgi efg ehi jln jmo klm kno
	
	\-\ 012 034 056 135 146 236 2jk 4lo 5mn 79b 7ac 7ln 89c 8de 8mo 9aj afi bcj \\
	\-\ bgh dfh dgi efg ehi klm kno
	
	\-\ 012 034 056 135 146 236 2de 49c 5ab 79b 7ac 7fg 89a 8bc 8hi dfh dgi efi \\
	\-\ ejk glo hmn jln jmo klm kno
	
	\-\ 012 034 056 135 146 236 2de 49c 5ab 79b 7ac 7fi 89a 8bc 8jk dfg dhi efh \\
	\-\ egi glo hmn jln jmo klm kno
	
	\-\ 012 034 056 135 146 236 28k 4lo 5mn 78j 79b 7ac 89c 9aj afi bcj bgh dfh \\
	\-\ dgi dln efg ehi emo klm kno
	
	\-\ 012 034 056 135 146 236 278 49c 5ab 7ac 7de 89a 8bc 9fg bhi dfh dgi efi \\
	\-\ ejk glo hmn jln jmo klm kno
	
	\-\ 012 034 056 135 146 236 278 49c 5ab 7ac 7de 89a 8bc 9fi bjk dfg dhi efh \\
	\-\ egi glo hmn jln jmo klm kno
	
	\-\ 012 034 056 135 146 236 278 49c 5ab 7ac 7fk 89a 8bc 9ln bmo def dgh dij \\
	\-\ egi ehj fgj hlo imn klm kno
	
	\-\ 012 034 056 135 1de 236 245 4fg 6hi 79b 7ac 7fh 89c 8ab 8gi 9ad bcd efi \\
	\-\ ejk glo hmn jln jmo klm kno
	
	\-\ 012 034 056 135 1de 236 245 4fg 6hi 79b 7ac 7fi 89c 8ab 8jk 9ad bcd efh \\
	\-\ egi glo hmn jln jmo klm kno
	
	\-\ 012 034 056 135 1ab 236 245 4jk 69c 79a 7bc 7de 89b 8ac 8fi dfg dhi efh \\
	\-\ egi glo hmn jln jmo klm kno
	
	\-\ 012 034 056 135 1ef 236 245 4gi 6jk 79b 7ac 7fi 89c 8ab 8lo 9ad bcd deg \\
	\-\ ehi fgh hmn jln jmo klm kno
	
	\-\ 012 034 056 135 1ef 236 245 4gj 6kl 79b 7ac 7kn 89c 8ab 8lm 9ad bcd dho \\
	\-\ egh eij fgi fhj imn kmo lno
	
	\-\ 012 034 056 135 146 236 2dk 4lo 5mn 79b 7ac 7lm 89c 8ab 8no 9ak bck dgh \\
	\-\ dij egi ehj eln fgj fhi fmo
	
	\-\ 012 034 056 135 146 236 27o 4mn 5dl 79b 7ac 89c 8ab 8lm 9ak bck dgh dij \\
	\-\ egi ehj ekn fgj fhi fmo lno
	
	\-\ 012 034 056 135 1bc 236 245 4jk 69a 79b 7ac 7lm 89c 8de 8no afi bgh dfh \\
	\-\ dgi efg ehi jln jmo klo kmn
	
	\-\ 012 034 056 135 1de 236 245 4no 69c 79a 7bc 7jk 89b 8ac 8lm afi bgh dfh \\
	\-\ dgi efg ehi jln jmo klo kmn
	
	\-\ 012 034 056 135 18b 236 245 49c 67a 7bc 7jk 8ac 8lm 9de 9no afi bgh dfh \\
	\-\ dgi efg ehi jln jmo klo kmn
	
	\-\ 012 034 056 135 1fh 236 245 4gi 69e 7ab 7cd 7jk 8ac 8bd 8lm 9ad 9no bfi \\
	\-\ cgh efg ehi jln jmo klo kmn
	
	\-\ 012 034 056 135 146 2bc 2de 369 45a 79b 7ac 7fi 89c 8ab 8jk dfg dhi efh \\
	\-\ egi glo hmn jln jmo klm kno
	
	\-\ 012 034 056 135 146 2bc 2jk 367 458 79b 7ac 89c 8de 9lm afi ano bgh dfh \\
	\-\ dgi efg ehi jln jmo klo kmn
	
	\-\ 012 034 056 135 146 29a 2bc 367 458 79b 7ac 8de 8jk 9lm afi bgh cno dfh \\
	\-\ dgi efg ehi jln jmo klo kmn
\end{config}

The 18 strongly 6-chromatic configurations with $v\leq 15$:

\begin{config}
\-\ 012 034 056 135 147 248 279 36a 59a 689 78a

\-\ 012 034 056 135 146 236 278 49c 5ab 79b 7ac 89a 8bc

\-\ 012 034 056 135 146 236 247 589 7ad 7bc 8ac 8bd 9ab 9cd

\-\ 012 034 056 135 146 28d 29c 368 457 79a 7bc 8ab 9bd acd

\-\ 012 034 056 135 146 29b 2cd 368 457 79a 7bc 89d 8ac abd

\-\ 012 034 056 135 146 28d 29c 36e 457 789 7bc 8ab 9ae acd bde

\-\ 012 034 056 135 147 239 245 6ae 6cd 789 7bc 8ab 8ce 9ad bde

\-\ 012 034 056 135 146 24c 25e 38a 6ce 78d 79e 7ab 89b 9ad bcd

\-\ 012 034 056 135 146 25c 26e 38a 4ce 78d 79e 7ab 89b 9ad bcd

\-\ 012 034 056 135 146 29b 2ae 368 45c 789 7ab 7de 8bd 9ce acd

\-\ 012 034 056 135 146 28a 2de 36d 45e 78b 79e 7ac 89c 9ab bcd

\-\ 012 034 056 135 146 27c 28a 36d 45e 78b 79e 89c 9ab ade bcd

\-\ 012 034 056 135 146 236 24e 5de 78b 79e 7ac 89c 8ad 9ab bcd

\-\ 012 034 056 135 146 24d 25e 36e 78b 79e 7ac 89c 8ad 9ab bcd

\-\ 012 034 056 135 146 236 28c 45e 78b 79e 7ac 89d 9ab ade bcd

\-\ 012 034 056 135 146 25c 2be 368 48c 789 7ab 7de 9ae 9bd acd

\-\ 012 034 056 135 147 23b 245 68d 6ae 79e 7ac 89a 8bc 9bd cde

\-\ 012 034 056 135 146 27d 2be 369 45c 789 7ab 8bc 8de 9ae acd
\end{config}
\section*{Acknowledgements}
The third author acknowledges support from the APVV Research Grants 15-0220 and 17-0428, and the VEGA Research Grants 1/0142/17 and 1/0238/19.

We thank the anonymous referees for pointing out to us the work of Funk et alia~\cite{funk2003det} which completed the existence spectrum of weakly 3-chromatic configurations. 


\end{document}